 \newtheorem{theorem}{Theorem}[section]
 \newtheorem{lemma}{Lemma}[section]
 \newtheorem{proposition}{Proposition}[section]
 \newtheorem{remark}{Remark}[section]
 \numberwithin{equation}{section}
\newcommand{\nn}{\nonumber}
\newcommand{\io}{\int_\Omega}
\newcommand{\R}{\mathbb{R}}
\newcommand{\beq}{\begin{equation}}
\newcommand{\eeq}{\end{equation}}
 \def\non{\nonumber }
\def\bea{\begin{eqnarray}}
\def\eea{\end{eqnarray}}
\begin{document}
\title{Global Existence for a Kinetic Model of Pattern Formation with Density-suppressed Motilities}
\author{Kentarou Fujie\thanks{Research Alliance Center for Mathematical Sciences, Tohuku University, Sendai 980-8578, Miyagi, Japan, \textsl{fujie@tohoku.ac.jp}},
	\ Jie Jiang\thanks{Wuhan Institute of Physics and Mathematics, Chinese Academy of Sciences,
		Wuhan 430071, HuBei Province, P.R. China,
		\textsl{jiang@wipm.ac.cn}.}}

\date{\today}

\maketitle

\begin{abstract} In this paper, we consider global existence of classical solutions to the following kinetic model of pattern formation 
	\begin{equation}
	\begin{cases}\label{chemo0}
	u_t=\Delta (\gamma (v)u)+\mu u(1-u)\\
	-\Delta v+v=u
	\end{cases}
	\end{equation}in a smooth bounded  domain $\Omega\subset\mathbb{R}^n$, $n\geq1$ with no-flux boundary conditions. Here,  $\mu\geq0$ is any given constant. The  function $\gamma(\cdot)$ represents a signal-dependent diffusion motility and is decreasing in $v$ which models a density-suppressed motility in process of stripe pattern formation through self-trapping mechanism \cite{PRL12,Sciencs11}.
	
	The major difficulty in analysis lies in the possible degeneracy of diffusion as $v\nearrow+\infty.$ In the present contribution, based on  a subtle observation of the nonlinear structure, we develop a new method to rule out finite-time degeneracy in any spatial dimension for all smooth motility function satisfying $\gamma(v)>0$ and $\gamma'(v)\leq0$ for $v\geq0$. Then we prove global existence of classical solution for \eqref{chemo0} in the two-dimensional setting  with  any $\mu\geq0$. Moreover, the global solution is proven to be uniform-in-time bounded if either $1/\gamma$ satisfies certain polynomial growth condition or $\mu>0.$ 
	
	Besides, we pay particular attention to the specific case $\gamma(v)=e^{-v}$ with $\mu=0$. Under the circumstances, system \eqref{chemo0} becomes of great interest because it shares the same set of equilibria as well as the Lyapunov functional with the classical Keller--Segel model. A novel critical phenomenon in the two-dimensional setting is observed that with any initial datum of sub-critical mass, the global solution is proved to be uniform-in-time bounded, while with certain initial datum of super-critical mass, the global solution will become unbounded as time goes to infinity. Namely, blowup takes place in infinite time rather than finite time in our model which is distinct from the well-known fact that certain initial data of super-critical mass will enforce a finite-time blowup for the classical Keller--Segel system.

{\bf Keywords}: Global existence, classical solutions, degeneracy, blowup, chemotaxis.\\
\end{abstract}
\section{Introduction}
Experimental observations show that colonies of bacteria and simple eukaryotes can generate complex shapes and patterns. In order to understand the mechanism of pattern formation, extensive mathematical models were derived including the Keller--Segel system modeling the pattern formation driven by chemotactic bacteria. In most cases,  the models invoke nonlinear diffusion of the cells where the diffusion coefficient increases with the local density \cite{BBTW15}.

Recently, it was theoretically proposed  in \cite{Sciencs11,PRL12} that density-suppressed motility could also lead to patterns via the so-called ''self-trapping" mechanism.  A  kinetic model with signal-dependent motility was proposed to describe the processing of stripe pattern formation through self-trapping. Denoting the cell density by $u$ and chemical concentration by $v$, the diffusion and production of $v$ is governed by 
\begin{equation}\label{v1}
	\varepsilon v_t-\Delta v+v=u,
\end{equation}
while the stochastic swim-and-tumble motion of cells  is modeled by the following diffusion equation with a logistic growth:
\begin{equation}\label{u1}
	u_t=\Delta(\gamma(v)u)+\mu u(1-u).
\end{equation}
Here $\mu\geq0$ and the motility function $\gamma(\cdot)>0$ depends explicitly on $v$. Moreover, for all $v>0$, it is assumed that
\begin{equation}\label{dec}
	\gamma'(v)<0,
\end{equation}
 since it takes into account the repressive effect of signal concentration (and hence cell density) on cell motility. Note that $\gamma(v)$  may approach to zero as $v\nearrow+\infty$, which characterizes the incessant tumbling of cells at high concentration, resulting in a vanishing macroscopic motility. Simulation results of \cite{Sciencs11,PRL12} show that this model correctly captures the dynamics at propagating front where new stripes are formed.
 
To the best of our knowledge, there are only a few theoretical results on this kinetic model in the literature. An essential difficulty in analysis lies in the possible degeneracy of diffusion in \eqref{u1} as $v\nearrow+\infty.$ By assuming uniform upper and lower boundedness for $\gamma$ as well as its derivative, Tao and Winkler \cite{TaoWin17} studied the  fully parabolic system consisting of \eqref{v1}-\eqref{u1} with $\mu=0$ under Neumann boundary conditions, where existence of global classical solutions in two dimensions and global weak solutions in the three dimensions were established. However, degeneracy was prevented due to their technical assumptions on $\gamma.$

Meanwhile, results on global existence with degenerate motility are rather limited. When $\mu>0$ and in the two-dimensional setting, Jin et al \cite{JKW18} proved existence of globally bounded classical solutions to the fully parabolic system permitting a general kind of degenerate motility functions. Moreover, they obtained convergence toward constant steady states provided that $\mu>\mu_*$ with some $\mu_*>0$ depending on $\gamma$.  However, their assumption on $\gamma(v)$ excluded very fast decay functions such as $e^{-v^2}$ or $e^{-e^{v}}$.

From a mathematical point of view, the problem becomes more challenging when $\mu=0$. In this case, to the authors' knowledge, only the following specific polynomial decay function was considered in the literature, i.e.,
\begin{equation*}
	\gamma(v)=\frac{c_0}{v^{k}},
\end{equation*}
with some $c_0,k>0$. Yoon and Kim \cite{YK17} investigated the initial-Neumann boundary problem where global existence was obtained for any $k>0$ under a smallness assumption on $c_0$. The only global existence result without smallness assumptions was recently given by Ahn and Yoon \cite{Anh19}. They considered the simplified parabolic-elliptic version of above system, that is
\begin{equation*}
	\begin{cases}
	u_t=\Delta (v^{-k}u)&x\in\Omega,\;t>0\\
	-\Delta v+v=u&x\in\Omega,\;t>0
	\end{cases}
\end{equation*}with homogeneous Neumann boundary conditions.  They established global existence of classical solutions with a uniform-in-time bound when $n\leq2$ for any $k>0$ or $n\geq3$ for $k<\frac{2}{n-2}$.

In the present contribution, we consider the initial-Neumann boundary value problem of the following parabolic-elliptic system:
\begin{equation}
\begin{cases}\label{chemo1}
u_t=\Delta (\gamma (v)u)+\mu u(1-u)&x\in\Omega,\;t>0\\
-\Delta v+v=u&x\in\Omega,\;t>0\\
\partial_\nu u=\partial_\nu v=0,\qquad &x\in\partial\Omega,\;t>0\\
u(x,0)=u_0(x),\qquad & x\in\Omega
\end{cases}
\end{equation}
where $\Omega\subset\mathbb{R}^n$ with $n\geq1$  is a smooth bounded domain. We study global existence of classical solutions to the above problem for general type of degenerate motility functions with any $\mu\geq0$. To be more precisely, we assume  throughout this paper that
\begin{equation}\label{ini}
u_0\in C^0(\overline\Omega),\quad u_0\geq0 \quad  \mbox{in } \overline\Omega, \quad u\not\equiv0
\end{equation}and for $\gamma$, we require that
\begin{equation}\label{gamma}
\mathrm{(A1)}:\qquad\gamma(v)\in C^3[0,+\infty),\;\gamma(v)>0\;\text{and}\;\gamma'(v)\leq0\;\text{on}\;(0,+\infty).
\end{equation}

As we mentioned above, the main obstacle in analysis comes from the possible degeneracy as $v\nearrow+\infty.$ Thus, in order to rule out degeneracy, one needs to obtain an upper bound for $v$. The typical way  is to derive the $L^\infty(0,T;L^p(\Omega))$ boundedness of $u$ for any $p>\frac{n}{2}$ which will directly yield the $L^\infty(0,T;L^\infty(\Omega))$ boundedness of $v$ according to the second equation in \eqref{chemo1} as done in previous studies \cite{Anh19,YK17} or in related work on the classical Keller--Segel models \cite{BBTW15}. However, this method fails for our system with general motility functions since in most cases, non-degeneracy is requisite to get the $L^\infty(0,T;L^p(\Omega))$ boundedness of $u$.

The novelty of the present paper is that a new approach is introduced to rule out finite-time degeneracy for all monotone decreasing motility functions directly. Based on a subtle observation, we find that the following identity
\begin{equation}\label{keyid}
	\partial_tv(x,t)+u\gamma(v)+\mu (I-\Delta)^{-1}[u^2]=(I-\Delta)^{-1}[u\gamma(v)+\mu u](x,t)
\end{equation}
holds for any smooth solution $(u,v)$  which unveils the hidden mechanism of the special structure of system \eqref{chemo1} and is the key ingredient to prove global existence.  Here, $(I-\Delta)^{-1}$ denotes the inverse operator of $I-\Delta$ and $\Delta$ is the usual Laplacian operator with homogeneous Neumann boundary condition. Several tricks are developed along with this key identity to derive upper bounds of $v$ under different circumstances. Roughly speaking, in view of the positivity of $u\gamma(v)+\mu(I-\Delta)^{-1} [u^2]$, thanks to the uniform-in-time lower boundedness $\inf_{x\in\Omega}v(x,t)\geq v_*$ given by Lemma \ref{lowerbound} below and the decreasing property of $\gamma(\cdot)$, we can deduce by the comparison principle together with Gronwall's inequality that
\begin{equation}\label{upb}
	v(x, t)\leq v_0(x)e^{(\gamma(v_*)+\mu)t}
\end{equation}  for any $(x,t)\in \Omega \times[0,T_{\mathrm{max}})$ with $v_0\triangleq(I-\Delta)^{-1}u_0$ and $T_{\mathrm{max}}$ being the maximal time of existence of classical solutions. Thus,  finite-time degeneracy cannot take place and hence global existence can be investigated by the classical energy method as done for Keller--Segel systems. On the other hand, with any $\mu>0$ if $n\leq3$ or any $\mu>\gamma(v_*)$ if $n\geq4$, we can further prove that the point-wise upper bound of $v(x,t)$ is in fact time-independent and hence uniform-in-time boundedness of the solutions can be discussed under the circumstances. In the present paper, we focus on the two-dimensional case and  the higher dimensional problem will be studied in our future works.

Now we are in a position to state our first main result on global existence of classical solutions with general motility functions in two dimensions.
\begin{theorem}\label{TH1}
	Assume $\Omega\subset\mathbb{R}^2$ and $\gamma(\cdot)$ satisfies (A1). For any given initial datum $u_0$ satisfying \eqref{ini}, system \eqref{chemo1} permits a unique classical solution $(u,v)\in (C^0(\overline{\Omega}\times[0,\infty))\cap C^{2,1}(\overline{\Omega}\times(0,\infty)))^2$. If $\mu>0$, then $(u,v)$ is uniform-in-time bounded in the sense that
	\begin{equation*}
		\|u(\cdot,t)\|_{L^\infty(\Omega)}+\|v(\cdot,t)\|_{L^\infty(\Omega)}\leq C\quad\text{for all}\;t>0
	\end{equation*}with some $C>0$ depending on $u_0$ and $\Omega$ only. 
	
	 Moreover, if
	\begin{equation}\label{gam2}
	K_0\triangleq\max\limits_{0\leq s\leq +\infty}\frac{|\gamma'(s)|^2}{\gamma(s)}<+\infty
	\end{equation}
	and $\mu>\frac{K_0}{16}$, there holds
	\begin{equation*}
	\lim\limits_{t\rightarrow+\infty}\left(\|u(\cdot,t)-1\|_{L^\infty(\Omega)}+\|v(\cdot,t)-1\|_{L^\infty(\Omega)}\right)=0.
	\end{equation*}
\end{theorem}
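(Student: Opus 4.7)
The plan is to establish the theorem in three stages corresponding to its three assertions.

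Stage 1 (global classical solution). Local well-posedness on a maximal interval $[0,T_{\max})$, together with the extensibility criterion $\limsup_{t\nearrow T_{\max}}\|u(\cdot,t)\|_{L^{\infty}(\Omega)}=\infty$ whenever $T_{\max}<\infty$, follows from a Schauder fixed-point argument applied to the $u$-equation after inserting $v=(I-\Delta)^{-1}u$, which renders it uniformly parabolic as long as $v$ stays bounded. The decisive step is therefore to exclude finite-time degeneracy of the motility $\gamma(v)$. Starting from \eqref{keyid}, the positivity of $u\gamma(v)$ and $\mu(I-\Delta)^{-1}[u^{2}]$, together with the a priori lower bound $v\geq v_{*}>0$ recalled in the introduction and the monotonicity $\gamma(v)\leq\gamma(v_{*})$, yields the pointwise inequality
\begin{equation*}
\pa_{t}v(x,t)\ \leq\ (I-\Delta)^{-1}\!\bigl[u\gamma(v)+\mu u\bigr](x,t)\ \leq\ (\gamma(v_{*})+\mu)\,v(x,t),
\end{equation*}
and a pointwise Gronwall argument produces exactly \eqref{upb}. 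Consequently $\gamma(v)\geq\gamma(\|v_{0}\|_{\infty}e^{(\gamma(v_{*})+\mu)T})>0$ on every finite interval $[0,T]$, restoring uniform parabolicity. A standard $L^{p}$-energy cascade combined with elliptic regularity $\|v\|_{W^{2,p}(\Omega)}\lesssim\|u\|_{L^{p}(\Omega)}$ and two-dimensional Gagliardo--Nirenberg interpolation then bootstraps the conserved $L^{1}$-mass control to $L^{p}$-estimates for every $p<\infty$, and a Moser--Alikakos iteration yields the finite-time $L^{\infty}$-bound that forces $T_{\max}=\infty$.

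Stage 2 (uniform-in-time bound when $\mu>0$). Integrating the $u$-equation gives $\tfrac{d}{dt}\|u\|_{L^{1}}+\mu\|u\|_{L^{2}}^{2}=\mu\|u\|_{L^{1}}$, producing a time-independent $L^{1}$-bound for $u$ together with an integrated $L^{2}$-dissipation. Revisiting \eqref{keyid}, the absorbing term $\mu(I-\Delta)^{-1}[u^{2}]$ on the left, combined with the logistic damping, dominates the source $(I-\Delta)^{-1}[u\gamma(v)+\mu u]$ uniformly in time; this upgrades \eqref{upb} to a $t$-independent pointwise bound $\|v(\cdot,t)\|_{L^{\infty}}\leq C$. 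Propagating this through the $L^{p}$-energy estimates, now enhanced by the logistic dissipation, finally yields the $t$-independent $L^{\infty}$-bound for $u$ asserted in the theorem.

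Stage 3 (convergence). I would employ the quasi-entropy
\begin{equation*}
\mathcal{F}(t)\ :=\ \io\bigl(u-\ln u-1\bigr)\,dx\ \geq\ 0,
\end{equation*}
whose derivative along the flow, after integration by parts against $1-1/u$, becomes
\begin{equation*}
\frac{d}{dt}\mathcal{F}\ =\ -\io\frac{\gamma(v)}{u^{2}}|\na u|^{2}\,dx\ -\ \io\frac{\gamma'(v)}{u}\na u\cdot\na v\,dx\ -\ \mu\io(u-1)^{2}\,dx.
\end{equation*}
Young's inequality together with \eqref{gam2} bounds the cross term by $\e\io\gamma(v)u^{-2}|\na u|^{2}\,dx+\tfrac{K_{0}}{4\e}\io|\na v|^{2}\,dx$, and testing the elliptic equation against $v-1$, followed by Young's inequality with the sharp choice $\delta=1$ in $\io(u-1)(v-1)\,dx\leq\delta\io(v-1)^{2}\,dx+\tfrac{1}{4\delta}\io(u-1)^{2}\,dx$, produces the refined estimate $\io|\na v|^{2}\,dx\leq\tfrac14\io(u-1)^{2}\,dx$. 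Sending $\e\to 1$ yields
\begin{equation*}
\frac{d}{dt}\mathcal{F}+\Bigl(\mu-\frac{K_{0}}{16}\Bigr)\io(u-1)^{2}\,dx\ \leq\ 0,
\end{equation*}
so that under $\mu>K_{0}/16$ the $L^{2}$-distance of $u$ from $1$ is time-integrable. Combined with the uniform Schauder bounds inherited from Stage 2, a compactness/LaSalle argument then forces $(u,v)\to(1,1)$ in $L^{\infty}(\Omega)$. The principal obstacle is Stage 1: converting the nonlocal identity \eqref{keyid} into a workable pointwise comparison for $v$, since without this one has no mechanism to prevent the diffusion from degenerating; the remaining stages follow from standard chemotaxis techniques, the only further subtlety being the careful tuning of the Young constants in Stage 3 that delivers the sharp threshold $K_{0}/16$.
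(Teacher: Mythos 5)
Your Stage~1 contains the decisive gap. After excluding degeneracy on $[0,T]$ via \eqref{keyid} and \eqref{upb} (which is indeed the paper's key step, up to the minor slip that for $\mu>0$ the mass is not conserved, so the strictly positive lower bound $v_*$ of Lemma \ref{lowerbound} is unavailable and one simply uses $\gamma(v)\le\gamma(0)$), you claim that a ``standard $L^p$-energy cascade combined with elliptic regularity and two-dimensional Gagliardo--Nirenberg interpolation bootstraps the conserved $L^1$-mass control to $L^p$-estimates for every $p<\infty$.'' In two dimensions $L^1$ is the critical norm for this Keller--Segel-type coupling, and this step does not close from mass alone: testing with $u^{p-1}$ and using elliptic regularity gives, as in \eqref{Lpineq}, $\frac{d}{dt}\int_\Omega u^p+c\int_\Omega u^{p-2}|\nabla u|^2\le C(T)\int_\Omega u^{p+1}$, and the Gagliardo--Nirenberg estimate $\int_\Omega u^{p+1}\le C_{GN}\,\|u\|_{L^1(\Omega)}\int_\Omega u^{p-2}|\nabla u|^2+\dots$ carries a prefactor proportional to $\Lambda\,C(T)$, which cannot be absorbed into the dissipation unless the mass (or $C(T)$, which involves $M_\gamma(T)$) happens to be small. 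This is exactly why the paper first proves the entropy bound $\sup_{t<T}\int_\Omega u\log u\le C(T)$ (Lemma \ref{glm1}), which itself needs the space-time estimate $\int_0^T\int_\Omega\gamma(v)u^2\le C(T)$ obtained by testing with $(-\Delta)^{-1}(u-\overline{u_0})$ (Lemma \ref{est0}) together with elliptic regularity for $\int_0^T\|v\|_{H^2}^2$, and then invokes the logarithmic interpolation inequality of Lemma \ref{fs}: its $1/\log s$ factor allows $\int_\Omega u^{p+1}$ to be absorbed with an arbitrarily small multiple of $\int_\Omega(u\log u+e^{-1})\int_\Omega u^{p-2}|\nabla u|^2$. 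Without this entropy mechanism (or an equivalent substitute) your $L^p$ differential inequality does not close for arbitrary mass, so the finite-time $L^\infty$ bound, and hence $T_{\max}=\infty$, is not established.

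A related, milder vagueness affects Stage~2: that the damping ``dominates the source uniformly in time'' is asserted, whereas the paper's actual mechanism (Lemmas \ref{mu1} and \ref{pte2}) is the inequality $v_t+\mu v+\gamma(v)u\le(I-\Delta)^{-1}[\gamma(v)u]+\mu$ combined with the exponentially weighted bound \eqref{unil2b} and the embedding $\|(I-\Delta)^{-1}f\|_{L^\infty(\Omega)}\le C\|f\|_{L^2(\Omega)}$; moreover the uniform-in-time $L^\infty$ bound for $u$ again passes through a uniform entropy estimate rather than Gagliardo--Nirenberg from mass alone. By contrast, your Stage~3 is sound: the functional $\int_\Omega(u-\log u-1)\,dx$, the identity for its derivative, the elliptic testing giving $\int_\Omega|\nabla v|^2\le\frac14\int_\Omega(u-1)^2$, and the resulting threshold $\mu>K_0/16$ reproduce precisely the argument the paper delegates to \cite{JKW18}, and this part needs only the uniform bounds from Stage~2 to upgrade the convergence to $L^\infty(\Omega)$.
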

\begin{remark}
For global existence and uniform-in-time boundedness of classical solutions, we do not need existence of  $\lim\limits_{v\rightarrow+\infty}\frac{\gamma'(v)}{\gamma(v)}$ as required in \cite{JKW18}. Thus, fast decay motilities such as $\gamma(v)=e^{-v^2}$ or $\gamma(v)=e^{-e^{v}}$ are permitted in our case for global existence. 
\end{remark}
We would like to mention that in general the global solution when $\mu=0$ may become unbounded as time goes to infinity since the upper bound of $v$ grows in time due to \eqref{upb}. However, if we propose the  following additional growth condition on $\gamma(\cdot)$: 
\begin{equation}\label{gamma2}\mathrm{(A2)}:\qquad\text{there is $k>0$ such that}
	\lim\limits_{s\rightarrow+\infty}s^{k}\gamma(s)=+\infty,
\end{equation}
we can also prove the uniform-in-time boundedness of the global solution as follows.
\begin{theorem}\label{TH3}
	Assume $n=2$, $\mu=0$ and $\gamma(\cdot)$ satisfies (A1) and (A2). Then the global classical solution is uniform-in-time bounded.
\end{theorem}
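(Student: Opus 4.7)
\medskip\noindent\textbf{Proof proposal for Theorem \ref{TH3}.} Global existence of a unique classical solution in the two-dimensional case is supplied by Theorem \ref{TH1}, so the task reduces to upgrading the time-dependent bound $v(x,t)\leq v_0(x)e^{\gamma(v_*)t}$ of \eqref{upb} to one that is uniform in $t$. The plan is to exploit (A2) through the key identity \eqref{keyid} in order to produce a time-uniform bound on $\|v(\cdot,t)\|_{L^\infty(\Omega)}$. Once this is in hand we are in the non-degenerate regime: $\gamma(v)$ is bounded above and away from zero, $|\gamma'(v)|$ and $|\gamma''(v)|$ are bounded, so the first equation of \eqref{chemo1} becomes uniformly parabolic and a standard Moser iteration yields a uniform-in-time $L^\infty$ bound for $u$ as well. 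The starting data of the program are mass conservation $\io u(\cdot,t)=\io u_0=M$ (integrate the first equation, using $\mu=0$), its consequence $\io v(\cdot,t)=M$ via the elliptic equation, and the positive lower bound $v\geq v_*>0$ of Lemma \ref{lowerbound}.

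The core step is to obtain a time-uniform bound for $\|u(\cdot,t)\|_{L^p(\Omega)}$ for some $p>1$, because in two space dimensions standard elliptic regularity applied to $-\Delta v+v=u$ then gives the desired $L^\infty$ bound on $v$. The quantitative content I would extract from (A2) is the polynomial reciprocal bound $1/\gamma(s)\leq C_0(1+s)^k$ for all $s\geq 0$, which converts every occurrence of $1/\gamma(v)$ in the subsequent analysis into a manageable polynomial power of $v$. Testing the $u$-equation against $u^{p-1}$ and handling the indefinite cross term by Young's inequality produces
\[
\frac{1}{p}\frac{d}{dt}\io u^p+\frac{p-1}{2}\io u^{p-2}\gamma(v)|\nabla u|^2\leq \frac{p-1}{2}\io u^p\,\frac{|\gamma'(v)|^2}{\gamma(v)}|\nabla v|^2.
\]
To control the right-hand side I would multiply \eqref{keyid} (with $\mu=0$) by a carefully chosen weight built from $u^{p-1}$ and a polynomial in $v$, and integrate by parts so that the bad factor $|\gamma'(v)|^2/\gamma(v)\,|\nabla v|^2$ gets rewritten in terms of $u\gamma(v)$ and of the smoothed quantity $(I-\Delta)^{-1}[u\gamma(v)]$. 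A two-dimensional Gagliardo--Nirenberg interpolation, together with the conserved mass $\|u\|_{L^1}=M$ and the polynomial bound on $1/\gamma$, should then close the $L^p$ estimate uniformly in $t$.

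The principal obstacle is this same cross term $\io u^p|\gamma'(v)|^2/\gamma(v)\,|\nabla v|^2$: no pointwise assumption on $|\gamma'|^2/\gamma$ is available, and no a priori $L^\infty$ control on $|\nabla v|$ can be invoked. The novelty of the approach is that the identity \eqref{keyid}, paired with the polynomial lower bound on $\gamma$ provided by (A2), should produce exactly the cancellation needed to absorb the bad term into the dissipation $\io u^{p-2}\gamma(v)|\nabla u|^2$. Verifying that every step of the bookkeeping remains uniform in $t$ and that the polynomial growth of $1/\gamma$ does not overwhelm the coercivity gained from mass conservation and from dissipation will be the technical heart of the proof.
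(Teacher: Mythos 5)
Your overall architecture (first make $\|v(\cdot,t)\|_{L^\infty(\Omega)}$ uniform in time, then conclude boundedness of $u$ in the non-degenerate regime) matches the paper, but the core step you propose is left as a hope rather than a proof, and it is precisely the step that does not go through. You want to close a uniform-in-time $L^p$ estimate for $u$ \emph{before} bounding $v$, by absorbing the cross term $\io u^p\,\frac{|\gamma'(v)|^2}{\gamma(v)}|\nabla v|^2$ into the dissipation with the help of \eqref{keyid}. But (A1)--(A2) give no control whatsoever of $\gamma'$ in terms of $\gamma$ (A2 only prevents $\gamma$ from decaying faster than a polynomial; $|\gamma'|^2/\gamma$ may be unbounded along $v\nearrow\infty$), and \eqref{keyid} is an identity for $v_t$, not for $\nabla v$: multiplying it by a weight in $u^{p-1}$ and powers of $v$ and integrating by parts produces terms involving $v_t$ and $(I-\Delta)^{-1}[u\gamma(v)]$, not the quantity $\frac{|\gamma'(v)|^2}{\gamma(v)}|\nabla v|^2$ you need to cancel. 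This is exactly the obstruction the paper points out for the ``$L^p$ of $u$ first'' route: non-degeneracy (i.e.\ an upper bound on $v$) is needed \emph{before} such $L^p$ estimates can be closed, so your plan is circular as stated. The Lemma \ref{lemma_LP}-type computation you are imitating is only performed in the paper \emph{after} a pointwise bound on $v$ is available, because only then is $M_\gamma=\sup|\gamma'|^2/\gamma$ finite on the relevant range of $v$.

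The paper's actual proof (Proposition \ref{Propvest1}) avoids any uniform $L^p$ bound on $u$. Testing the first equation with $v$ gives
$\frac{d}{dt}\io(|\nabla v|^2+v^2)+\io u^2\gamma(v)\le\io v^2\gamma(v)$, and the right-hand side, together with the terms $\io v^2/\gamma(v)$ arising from the second equation, is bounded uniformly using (A2) in the form $\gamma^{-1}(s)\le bs^k+\gamma^{-1}(s_b)$ and the Brezis--Strauss estimate $\|v\|_{L^p(\Omega)}\le C\|u_0\|_{L^1(\Omega)}$ for every finite $p$ in 2D. This yields a uniform $H^1$ bound on $v$ and, crucially, only a \emph{time-window} bound $\sup_t\int_t^{t+1}\io u^2\gamma(v)\le C$. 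Then a weighted H\"older argument, $\|v\|_{L^\infty}\le C\|u\|_{L^p}\le C\bigl(\io u^2\gamma(v)\bigr)^{1/2}\bigl(\io\gamma^{-\frac{p}{2-p}}(v)\bigr)^{\frac{2-p}{2p}}$ with $1<p<2$, converts this into $\int_t^{t+1}\|v\|_{L^\infty}\,ds\le C$; finally the key identity is used pointwise in $x$ as the differential inequality $v_t+u\gamma(v)\le\gamma(v_*)v$, and the uniform Gronwall lemma (Lemma \ref{uniformGronwall}) upgrades the time-averaged bound to $v(x,t)\le C$ for $t\ge1$. In short: the missing idea in your proposal is that one should trade the (unattainable) uniform $L^p$ control of $u$ for time-averaged control of $\io u^2\gamma(v)$ plus the ODE-in-time use of \eqref{keyid}; without supplying that (or some genuine substitute for the claimed ``cancellation''), your argument has a gap at its central step.
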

\begin{remark}\label{singularity0}With the aid of Lemma \ref{lowerbound}, $v$ is bounded from below by a strictly positive constant $v_*$ when $\mu=0$. Thus, we may allow $\gamma(s)$ to has singularity at $s=0$ since we can simply replace $\gamma(s)$ by a new motility function $\tilde{\gamma}(s)$ which satisfies $\mathrm{(A1)}$ and coincides with $\gamma(s)$ for $s\geq\frac{v_*}{2}$. Thus, Theorem \ref{TH3} generalizes the existence result in 2D of \cite{Anh19} for $\gamma(v)=v^{-k}$ to more general motility functions satisfying (A2), for example, $\gamma(v)=\frac{1}{v^{k}\log(1+v)}$ with any $k>0$.
\end{remark}

Next, we consider system \eqref{chemo1} with the specific motility function $\gamma(v)=e^{-v}$ and $\mu=0$ in the two-dimensional case, that is,
\begin{equation}\label{chemo2a}
	\begin{cases}
		u_t=\Delta (ue^{-v})=\nabla \cdot(e^{-v}(\nabla u-u\nabla v)),&x\in\Omega,\;t>0\\
		-\Delta v+v=u,&x\in\Omega,\;t>0.
	\end{cases}
\end{equation}

System \eqref{chemo2a} is of great interest because it resembles the classical parabolic-elliptic Keller--Segel system:
\begin{equation}\label{ks}
	\begin{cases}
	u_t=\nabla\cdot(\nabla u-u\nabla v)\\
	-\Delta v+v=u\\
	\partial_\nu u=\partial_\nu v=0.
	\end{cases}
\end{equation}
Besides, they share certain important features. First, they have the same stationary problem which reads
\begin{equation*}
\begin{cases}
-\Delta v+v=\Lambda e^{v}/\int_\Omega e^{v}\,dx\;\;\text{in}\;\Omega\\
u=\Lambda e^{v}/\int_\Omega e^{v}\,dx\;\;\text{in}\;\Omega\\
\partial_\nu v=0\;\;\text{on}\;\partial\Omega
\end{cases}
\end{equation*}with $\Lambda=\|u_0\|_{L^1(\Omega)}>0$.

Second, they have the same Lyapunov functional. Indeed, for any smooth solution $(u,v)$ of \eqref{chemo2a}, the following energy-dissipation relation holds
\begin{equation}\label{Lyapunov1}
\frac{d}{dt}E(u,v)(t)+\int_\Omega ue^{-v}\left|\nabla \log u-\nabla v\right|^2dx=0,
\end{equation} where the Lyapunov functional is defined by
\begin{equation*}
E(u,v)=\int_\Omega \left(u\log u+\frac12|\nabla v|^2+\frac12 v^2-uv\right)dx.
\end{equation*} In comparison, for system \eqref{ks}, there holds
\begin{equation*}
	\frac{d}{dt}E(u,v)(t)+\int_\Omega u\left|\nabla \log u-\nabla v\right|^2dx=0.
\end{equation*} The only difference lies in an extra weighed function $e^{-v}$ appearing in the dissipation term in \eqref{Lyapunov1}.

Therefore, an interesting question is whether the behaviors of solutions to \eqref{chemo2a} and \eqref{ks} are similar. A well-known fact of the Keller--Segel model \eqref{ks} is that classical solutions with large initial data may blow up when dimension $n\geq2$ (see, e.g., \cite{HW01,Nagai98, Nagai00,nagai2001}), i.e., there is $T_{\mathrm{max}}\in(0,+\infty]$ such that
\begin{equation*}
\lim\limits_{t\nearrow T_{\mathrm{max}}}\left(	\|u(\cdot,t)\|_{L^\infty(\Omega)}+\|v(\cdot,t)\|_{L^\infty(\Omega)}\right)=+\infty.
\end{equation*}
In particular, a critical-mass phenomenon exists in the two-dimensional case. If the conserved total mass of cells $\Lambda\triangleq\int_\Omega u_0 dx$ is lower than certain threshold number $\Lambda_c$, then global classical solution exists and remains bounded for all times\cite{Nagai97}; otherwise, it may blow up in finite or infinite time\cite{HW01,ssMAA2001}. Existence of finite-time blowup was examined in \cite{nagai2001,Win13,jaeger_luckhaus}. However, to our knowledge, infinite-time blowup was only obtained for Cauchy problem when the second equation of \eqref{ks} is replaced by $-\Delta v=u$ in \cite{BCM10,GM18} with critical mass $8\pi$.

In contrast, we also observe an interesting critical phenomenon for system \eqref{chemo2a} in the two-dimensional setting. Classical solution exists globally for any initial datum with arbitrarily large total mass by Theorem \ref{TH1} which means no finite-time blowup occurs. Moreover, the solution is uniform-in-time bounded if the total mass is less than $\Lambda_c$, while  with certain initial datum of supper-critical mass one can construct global classical solution which blows up at time infinity, i.e.,
\begin{equation}
\lim\limits_{t\nearrow +\infty}\left(	\|u(\cdot,t)\|_{L^\infty(\Omega)}+\|v(\cdot,t)\|_{L^\infty(\Omega)}\right)=+\infty.\nn
\end{equation}
More precisely, we obtain the result for problem \eqref{chemo2a} as follow.
\begin{theorem}\label{TH2}
Assume $n=2$, $\gamma(v)=e^{-v}$, $\mu=0$ and $u_0$ satisfies \eqref{ini}. Let
\begin{equation}
\Lambda_c=\begin{cases}
8\pi\qquad\text{if}\;\Omega=B_R(0)\triangleq\{x\in\mathbb{R}^2;\;|x|<R\}\;\;\text{with some}\;0<R<\infty\;\text{and}\;u_0 \;\text{is radial in}\;x\\
4\pi\qquad\text{otherwise.}		\nn
\end{cases}
\end{equation} Then if $\Lambda\triangleq\int_\Omega u_0 dx<\Lambda_c$, the global classical solution of \eqref{chemo2a} is uniform-in-time bounded. Moreover, the solution converges to an equilibrium as time goes to infinity.

  On the other hand, if $\Omega=B_R(0)$, there exists radially symmetric initial datum $u_0$ with $\Lambda\in(8\pi,\infty)\backslash4\pi\mathbb{N}$ such that the corresponding global classical solution blows up at time infinity. More precisely,
\begin{equation*}
\lim\limits_{t\nearrow +\infty}\|u(\cdot,t)\|_{L^\infty(\Omega)}=\lim\limits_{t\nearrow +\infty}\int_\Omega uv dx=\lim\limits_{t\nearrow +\infty}\int_\Omega (|\nabla v|^2+v^2)dx=\lim\limits_{t\nearrow +\infty}\int_\Omega e^{\alpha v}dx=+\infty
\end{equation*}for any $\alpha>\frac12.$
\end{theorem}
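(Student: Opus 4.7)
The plan is to handle the sub-critical and super-critical regimes separately, but with the Lyapunov functional $E(u,v)$ from \eqref{Lyapunov1} as the common backbone. A useful first observation is that, after substituting $-\Delta v+v=u$, one may rewrite
\[
E(u,v)=\int_\Omega u\log u\,dx-\frac12\int_\Omega uv\,dx,
\]
so that $E$ interpolates between the entropy of $u$ and the interaction energy in a form suited to Moser--Trudinger-type inequalities.

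For the sub-critical case I would first combine the Moser--Trudinger inequality with Jensen's inequality to bound $\int_\Omega uv\,dx$ in terms of $\|v\|_{H^1}^2$ with the sharp constants $1/(4\pi)$ on a general two-dimensional domain and $1/(8\pi)$ in the radial class on a disc. Since the total mass $\Lambda$ is conserved, this yields a coercivity estimate of the form
\[
E(u,v)\ge \eta\bigl(\|u\log u\|_{L^1(\Omega)}+\|v\|_{H^1}^2\bigr)-C
\]
as soon as $\Lambda<\Lambda_c$, with some $\eta>0$. Monotonicity of $E$ along the flow then produces uniform-in-time bounds on $\int u\log u\,dx$ and $\|v\|_{H^1}$. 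A Brezis--Merle/Trudinger argument applied to $-\Delta v+v=u$ upgrades this to $\|v\|_{L^\infty(\Omega)}\le C$, after which $e^{-v}$ has a uniform positive lower bound and standard semigroup or Moser iteration estimates on the first equation in \eqref{chemo2a} give the $L^\infty$ bound on $u$. Convergence to an equilibrium would then follow from parabolic compactness of the orbit combined with a LaSalle-type argument: the $\omega$-limit set is non-empty and every cluster point is a stationary solution with mass $\Lambda$; uniqueness of the limit can be extracted either by a Lojasiewicz--Simon inequality adapted to the analytic structure of $E$ or, in the sub-critical regime, by showing that only the constant steady state is compatible with the a priori energy bound.

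The super-critical part is treated by contradiction, again powered by $E$. For any target mass $\Lambda\in(8\pi,\infty)\setminus 4\pi\mathbb{N}$ one constructs a radial initial datum $u_0$ by taking a suitably regularized, strongly concentrated bump near the origin; for $\Lambda>8\pi$ the radial Moser--Trudinger inequality is saturated, so $E(u_0,v_0)$ can be driven below any prescribed threshold. By Theorem~\ref{TH1} the associated solution is global and radial. Assume, toward a contradiction, that $\|u(\cdot,t)\|_{L^\infty(\Omega)}$ remains bounded for all $t\ge 0$. Then $v$ is uniformly bounded, $e^{-v}\ge c_0>0$, $E(u(t),v(t))$ is bounded below along the flow, and the dissipation in \eqref{Lyapunov1} is integrable in time. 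Standard compactness and $\omega$-limit analysis then force every subsequential limit to be a radial stationary solution $(u_\star,v_\star)$ of mass $\Lambda$ with $E(u_\star,v_\star)\le E(u_0,v_0)$. Invoking the Brezis--Merle and Nagasaki--Suzuki quantization theory for the Liouville-type problem $-\Delta v+v=\Lambda e^v/\int_\Omega e^v\,dx$ on a disc in the radial class, one concludes that admissible masses of radial stationary solutions lie in $(0,8\pi]\cup 4\pi\mathbb{N}$, which contradicts $\Lambda\in(8\pi,\infty)\setminus 4\pi\mathbb{N}$. Hence $\|u(\cdot,t)\|_{L^\infty(\Omega)}$ must be unbounded; the monotonicity of $E$ (together with the fact that excursions to bounded regimes would force a limit point in the stationary set) upgrades this to the full $\lim_{t\to\infty}$ statement. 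The divergence of $\int uv\,dx$, $\|v\|_{H^1}^2$ and $\int e^{\alpha v}\,dx$ for $\alpha>1/2$ then follows from the identities $\int(|\nabla v|^2+v^2)\,dx=\int uv\,dx$ together with the radial Moser--Trudinger inequality, which converts any $L^\infty$ concentration of $u$ into unbounded exponential moments of $v$.

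The hard part will be the super-critical analysis: first, producing a concrete radial datum whose forward orbit cannot be uniformly bounded and tying this failure precisely to the arithmetic condition $\Lambda\notin 4\pi\mathbb{N}$ requires a careful Brezis--Merle-type blow-up analysis of radial solutions of the elliptic Liouville-type problem, handled in the presence of the lower-order $+v$ term (absent in the pure $\mathbb{R}^2$ Cauchy-problem approach of \cite{BCM10,GM18}); second, promoting the contradiction argument from ``$\limsup=\infty$'' to the full $\lim=\infty$ demands a delicate use of the monotonicity of $E$ and the dissipation relation to rule out recurrence of the orbit to bounded states.
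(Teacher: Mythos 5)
Your overall architecture (the Lyapunov functional $E(u,v)=\int_\Omega u\log u\,dx-\tfrac12\int_\Omega uv\,dx$, Moser--Trudinger coercivity for sub-critical mass, and, in the super-critical case, driving $E(u_0,v_0)$ very negative and contradicting convergence to a steady state) is the same as the paper's. The sub-critical half is essentially fine: the paper gets the crucial uniform bound on $\|v\|_{L^\infty}$ by a different device (the time--space estimate $\int_t^{t+1}\int_\Omega u^2e^{-v}\le C$, the identity $v_t+ue^{-v}=(I-\Delta)^{-1}[ue^{-v}]\le v$, and the uniform Gronwall lemma), whereas your ``Brezis--Merle'' step really needs the $L\log L$--$\exp L$ duality against the logarithmic Green's function to pass from the entropy bound to $v\in L^\infty$; that is fixable, and your subsequent $L^p\to L^\infty$ bootstrap and the Lojasiewicz--Simon convergence match the paper's Lemma \ref{lemma_LP}/Lemma \ref{fs} and Proposition \ref{prop2}.

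The super-critical half, however, has a genuine error and a genuine omission. First, your contradiction rests on the claim that ``admissible masses of radial stationary solutions lie in $(0,8\pi]\cup4\pi\mathbb{N}$.'' That is false: the constant state $u\equiv\Lambda/|\Omega|$ solves \eqref{eqn:biharmoniceqn} for \emph{every} mass, and non-constant solutions exist for a continuum of masses, so convergence of a bounded orbit to a stationary solution is not by itself contradictory. The quantization theorem of Senba--Suzuki enters differently: for $\Lambda\notin4\pi\mathbb{N}$ the set $\mathcal{S}(\Lambda)$ is bounded in $L^\infty$ and $E_\ast(\Lambda)=\inf_{\mathcal{S}(\Lambda)}E>-\infty$, and the contradiction comes from choosing data with $E(u_0,v_0)<E_\ast(\Lambda)$ together with monotonicity of $E$ along the flow; you state $E(u_\star,v_\star)\le E(u_0,v_0)$ but never use it this way. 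Second, the existence of such data is exactly the nontrivial point in the parabolic--elliptic setting, because $v_0$ cannot be chosen freely: it is constrained to be $(I-\Delta)^{-1}u_0$. Your phrase ``a suitably regularized, strongly concentrated bump ... so $E(u_0,v_0)$ can be driven below any prescribed threshold'' asserts precisely what must be proved; the paper devotes Section \ref{construction} to it, using the bubble $\overline{u}_\lambda$, a radial representation formula for $v_0$, and the balance $\int u_0\log u_0\le 16\pi a\log\lambda+C$ versus $\tfrac12\int u_0v_0\ge 16\pi a^2\log\lambda-C$ with $a>\Lambda/8\pi>1$. Finally, the upgrade from failure of boundedness to the full limits $\lim_{t\to\infty}\int uv=\lim\int e^{\alpha v}=+\infty$ is only acknowledged as ``delicate'' in your sketch; the paper carries it out by assuming $\liminf_{t\to\infty}\int uv<\infty$, propagating this bound over intervals of fixed length $\delta_\ast$ via $\frac{d}{dt}\int uv\le 2\int uv$, extracting times where the dissipation vanishes, and passing to a stationary limit with $E\ge E_\ast(\Lambda)$, contradicting $E(u_0,v_0)<E_\ast(\Lambda)$. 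As written, your proposal does not close these steps.
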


\begin{remark}
Similar mass critical phenomenon as mentioned above was established for chemotaxis models in some special cases \cite{CS, TaoWincritical}.
\end{remark}

Uniform-in-time boundedness of $v$ with sub-critical mass is somehow tricky as it is for the case in Theorem \ref{TH3}. Classical iteration approach \cite{Ali} fails in our case where uniform-in-time $\|v(t,\cdot)\|_{L^\infty}$ boundedness is obtained by proving $\|v(t,\cdot)\|_{L^p}\leq C$ for any $p>1$ with some $C>0$ independent of $p$ and $t$. In this paper, based on some delicate estimates and the the classical result in \cite{Nagai97}, utilizing the key identity \eqref{keyid} and the uniform Gronwall inequality, we develop a new method to establish the uniform-in-time point-wise upper bound of $v$.
 
 In view of the same features of system \eqref{chemo2a} shared with the Keller--Segel system \eqref{ks} mentioned above,  infinite-time blowup is proven following the idea in \cite{HW01,ssMAA2001} since on the one hand, it was shown that if $\Lambda\notin4\pi\mathbb{N}$, then for any initial data $u_0$ emanating a uniform-in-time global solution, $E(u_0,v_0)$ with $v_0=(I-\Delta)^{-1} u_0$ must be bounded from below. On the other hand, we may construct a sequence of initial data $(u_{0\lambda},v_{0\lambda})$ with $v_{0\lambda}=(I-\Delta)^{-1}u_{0\lambda}$ such that $E(u_{0\lambda},v_{0\lambda})\rightarrow-\infty$ as $\lambda\rightarrow+\infty$. Therefore, the global solution starting from $(u_{0\lambda},v_{0\lambda})$ must blow up in infinite time.

Existence of such initial data $(u_{0\lambda},v_{0\lambda})$ was proved for the fully parabolic Keller--Segel system \cite{HW01}. However in the parabolic-elliptic case, there is an addition constraint on the initial data that $v_{0\lambda}-\Delta v_{0\lambda}=u_{0\lambda}$ in $\Omega$ and $\partial_\nu v_{0\lambda}=0$ on $\partial\Omega$ which means $u_{0\lambda}$ and $v_{0\lambda}$ cannot be independently chosen as in \cite{HW01}. Thus it provides us with much less freedom for the construction in the latter case. Existence of such kind of initial data in the radially symmetric case was claimed in \cite{ssMAA2001} with no detail. Similar problem was tackled for quasilinear parabolic-elliptic Keller--Segel systems recently in higher dimensions in \cite{Lankeit}. However their construction fails in our case. Since the authors find no references providing us the desired construction, we give a concrete example in detail in Section \ref{construction}.
We remark that our construction can also be applied to the parabolic-elliptic Keller-Segel system.

 The rest of the paper is organized as follows. In Section 2, we provide some preliminary results and recall some useful lemmas. Then we prove the key identity \eqref{keyid} in Section 3 and establish point-wise upper bounds for $v$ in various situations. Thanks to the upper bound of $v$, we are able to study global existence of classical solutions in Section 4. The last section is devoted to  the case $\gamma(v)=e^{-v}$ and $\mu=0$ where the new critical phenomenon is proved in the two-dimensional setting.
 
\section{Preliminaries}
In this section, we recall some lemmas which will be used in the sequel. First, local existence and uniqueness of classical solutions to system \eqref{chemo1} can be
established by the standard fixed point argument and  regularity theory for elliptic equations. Similar proof can be found in \cite[Lemma 3.1]{Anh19} or \cite[Lemma 2.1]{JKW18}  and hence here we omit the detail here.
\begin{theorem}\label{local}
	Let $\Omega$ be a smooth bounded domain of $\mathbb{R}^n$. Suppose that $\gamma(\cdot)$ satisfies \eqref{gamma} and $u_0$ satisfies \eqref{ini}. Then there exists $T_{\mathrm{max}} \in (0, \infty]$ such that problem \eqref{chemo1} permits a unique classical solution $(u,v)\in (C^0(\overline{\Omega}\times[0,T_{\mathrm{max}}))\cap C^{2,1}(\overline{\Omega}\times(0,T_{\mathrm{max}})))^2$. 

	 If $T_{\mathrm{max}}<\infty$, then
	\begin{equation*}
	\lim\limits_{t\nearrow T_{\mathrm{max}}}\|u(\cdot,t)\|_{L^\infty(\Omega)}=\infty.
	\end{equation*}
	
	Moreover, the solution $(u,v)$ satisfies the mass conservation when $\mu=0:$
	\begin{equation*}
	\int_{\Omega}u(\cdot,t)dx=\int_\Omega v(\cdot,t)dx=\int_{\Omega}u_0 dx
	\quad \text{for\ all}\ t \in (0,T_{\mathrm{max}}).
	\end{equation*}	
\end{theorem}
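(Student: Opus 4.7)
The plan is to reduce system \eqref{chemo1} to a single nonlocal parabolic problem for $u$ by exploiting the parabolic-elliptic structure, and then to apply the Banach fixed-point theorem on a small time interval. Fix $M>\|u_0\|_{L^\infty(\Omega)}+1$ and $T\in(0,1)$ to be chosen. For any $\tilde u\in C^0(\overline\Omega\times[0,T])$ satisfying $0\le\tilde u\le M$ and $\tilde u(\cdot,0)=u_0$, solve the auxiliary elliptic problem $-\Delta\tilde v+\tilde v=\tilde u$ with homogeneous Neumann boundary conditions, writing $\tilde v=(I-\Delta)^{-1}\tilde u$, and then the linear parabolic problem
\[
u_t=\gamma(\tilde v)\Delta u+2\gamma'(\tilde v)\nabla\tilde v\cdot\nabla u+\bigl[\gamma''(\tilde v)|\nabla\tilde v|^2+\gamma'(\tilde v)\Delta\tilde v+\mu(1-u)\bigr]u
\]
with initial datum $u_0$ and no-flux boundary condition. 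Denote the solution by $u=\Phi(\tilde u)$.

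Elliptic $L^p$ theory gives $\tilde v\in W^{2,p}(\Omega)$ uniformly in $t\in[0,T]$ for every $p\in(1,\infty)$, and the maximum principle yields $\tilde v\ge 0$, so assumption (A1) together with Sobolev embedding produces uniform bounds on $\gamma(\tilde v),\gamma'(\tilde v),\gamma''(\tilde v)$ and on $\tilde v$ in a Hölder space. Classical parabolic $L^p$ or Schauder theory then yields a unique classical solution $u=\Phi(\tilde u)\in C^0(\overline\Omega\times[0,T])\cap C^{2,1}(\overline\Omega\times(0,T])$, and the maximum principle combined with a shrinking-$T$ argument to absorb the logistic term gives $0\le u\le M$. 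Estimating the difference $\Phi(\tilde u_1)-\Phi(\tilde u_2)$ by subtracting the linear equations, invoking the local Lipschitz continuity of $\gamma,\gamma',\gamma''$ on $[0,\|\tilde v\|_{L^\infty}]$, and using elliptic regularity to bound $\tilde v_1-\tilde v_2$ in $W^{2,p}$ by $\tilde u_1-\tilde u_2$ in $L^p$, one obtains a Lipschitz constant of order $T^\alpha$ for some $\alpha>0$. Hence $\Phi$ is a contraction on the closed convex set above for $T$ sufficiently small, and its fixed point furnishes the desired local classical solution; uniqueness follows from the same Lipschitz-type estimate applied to any two classical solutions.

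Extension to a maximal interval $[0,T_{\mathrm{max}})$ and the blow-up criterion follow by the standard continuation argument: if $\|u(\cdot,t)\|_{L^\infty(\Omega)}$ remained bounded up to some $T<T_{\mathrm{max}}$, elliptic regularity would propagate this to uniform bounds on $v$ in $W^{2,p}$, hence on the coefficients of the $u$-equation, and a restart of the fixed-point argument at times approaching $T$ would produce an extension beyond $T$, contradicting maximality. Mass conservation when $\mu=0$ is obtained by integrating $u_t=\Delta(\gamma(v)u)$ over $\Omega$ and using the no-flux boundary condition to kill the divergence, while $\int_\Omega v\,dx=\int_\Omega u\,dx$ follows by integrating the elliptic equation and invoking $\int_\Omega\Delta v\,dx=0$. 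The only delicate step is verifying that $\Phi$ is genuinely a contraction, but this is routine here because $\tilde u\mapsto\tilde v$ gains two derivatives via elliptic regularity and $\gamma\in C^3$; consequently no serious obstacle arises, and the entire argument runs in parallel with \cite[Lemma~3.1]{Anh19} and \cite[Lemma~2.1]{JKW18}.
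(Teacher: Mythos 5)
Your proposal follows exactly the route the paper takes: the paper does not write out a proof but invokes the standard fixed-point argument combined with elliptic regularity theory (citing \cite[Lemma 3.1]{Anh19} and \cite[Lemma 2.1]{JKW18}), which is precisely your Banach fixed-point scheme with $\tilde v=(I-\Delta)^{-1}\tilde u$, continuation, and integration for the mass identities. The argument is correct in substance (minor cosmetic points: the frozen-coefficient problem you solve is semilinear because you kept $\mu u(1-u)$ with the unknown $u$, and obtaining genuine $C^{2,1}$ regularity requires a brief bootstrap from parabolic $L^p$ estimates to Schauder theory), so it matches the paper's intended proof.
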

Note that a strictly positive uniform-in-time lower bound for $v$ is given in \cite[Corollary 2.3]{Anh19}.
\begin{lemma}\label{lowerbound}
	Suppose $\mu=0$ and $(u,v)$  is the classical solution of \eqref{chemo1} up to the maximal time of existence $T_{\mathrm{max}}\in(0,\infty]$. Then, there exists a strictly positive constant $v_*=v_*(n,\Omega,\|u_0\|_{L^1(\Omega)})$ such that for all $t\in(0,T_{\mathrm{max}})$, there holds
	\begin{equation*}
		\inf\limits_{x\in\Omega}v(x,t)\geq v_*.
	\end{equation*}
\end{lemma}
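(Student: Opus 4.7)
The plan is to combine two classical ingredients: the conservation of mass of $u$ in the regime $\mu=0$, and a uniform strictly positive lower bound for the Green's function of $-\Delta+I$ on $\Omega$ subject to homogeneous Neumann boundary conditions.

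First I would record mass conservation. Integrating the first equation in \eqref{chemo1} over $\Omega$ with $\mu=0$ and using $\partial_\nu(\gamma(v)u)=0$, the divergence theorem yields $\frac{d}{dt}\int_\Omega u(\cdot,t)\,dx=0$, so $\int_\Omega u(\cdot,t)\,dx=\Lambda:=\|u_0\|_{L^1(\Omega)}>0$ for every $t\in(0,T_{\mathrm{max}})$ (this is the last assertion of Theorem \ref{local}). Let $G_\Omega(x,y)$ denote the Green's function of $-\Delta+I$ on $\Omega$ with homogeneous Neumann condition. Since $v(\cdot,t)$ solves $-\Delta v+v=u\ge0$ in $\Omega$ with $\partial_\nu v=0$ on $\partial\Omega$, we have the representation
\[
v(x,t)=\int_\Omega G_\Omega(x,y)\,u(y,t)\,dy.
\]
The heart of the argument is the claim $G_\Omega(x,y)\ge c_*$ for some $c_*=c_*(n,\Omega)>0$ and all $(x,y)\in\overline\Omega\times\overline\Omega$ with $x\neq y$. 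Granting this, non-negativity of $u$ and conservation of mass immediately give $v(x,t)\ge c_*\Lambda$, so one may take $v_*:=c_*\Lambda$, which depends exactly on $n$, $\Omega$ and $\|u_0\|_{L^1(\Omega)}$ as claimed.

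To establish the lower bound on $G_\Omega$, I would use the Neumann heat kernel $p_N(t,x,y)$ and the resolvent representation
\[
G_\Omega(x,y)=\int_0^\infty e^{-t}\,p_N(t,x,y)\,dt.
\]
Expanding $p_N$ in the orthonormal basis of Neumann Laplacian eigenfunctions, the zero eigenvalue is simple with constant eigenfunction $|\Omega|^{-1/2}$, and the Poincaré inequality furnishes a strictly positive spectral gap $\lambda_1(\Omega)>0$. Combining this with the standard $L^\infty$ bounds on Neumann eigenfunctions on smooth bounded domains yields $p_N(t,x,y)\to |\Omega|^{-1}$ uniformly on $\overline\Omega\times\overline\Omega$ at an exponential rate. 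Choosing $T_0=T_0(n,\Omega)$ so that $p_N(t,x,y)\ge\tfrac{1}{2|\Omega|}$ whenever $t\ge T_0$, one concludes
\[
G_\Omega(x,y)\ge \int_{T_0}^\infty e^{-t}\,\frac{1}{2|\Omega|}\,dt=\frac{e^{-T_0}}{2|\Omega|}=:c_*>0,
\]
and the lemma follows.

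The main obstacle is the quantitative ultracontractivity estimate on $p_N$ that is uniform in $x,y\in\overline\Omega$; this is standard but cannot be replaced by a purely pointwise maximum-principle argument, since the maximum principle applied to $-\Delta v+v=u\ge 0$ only delivers $v\ge 0$ and does not by itself prevent $\inf_\Omega v$ from collapsing to zero when $u$ concentrates.
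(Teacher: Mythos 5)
Your proposal is correct and is essentially the same argument the paper relies on: the paper does not prove Lemma \ref{lowerbound} itself but simply cites \cite[Corollary 2.3]{Anh19}, and the proof there rests on exactly the two ingredients you use, namely conservation of $\|u(\cdot,t)\|_{L^1(\Omega)}$ when $\mu=0$ and a strictly positive uniform lower bound for the Neumann Green's function of $I-\Delta$, giving $v(x,t)\ge c_*\|u_0\|_{L^1(\Omega)}$. As a minor remark, the bound $G_\Omega\ge c_*(n,\Omega)>0$ can be obtained more directly than by your heat-kernel/ultracontractivity route: $G_\Omega(\cdot,y)>0$ on $\overline\Omega$ by the strong maximum principle and the Hopf lemma (using $\partial_\nu G_\Omega=0$), it is continuous away from the diagonal, and it tends to $+\infty$ (or stays bounded below for $n=1$) near the diagonal, so compactness already yields the uniform positive lower bound; your spectral-gap argument is fine but needs the quantitative eigenfunction/ultracontractivity input you acknowledge.
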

\begin{remark}
	If $\mu>0$, $\|u\|_{L^1(\Omega)}$ is not conserved and may decay in time. Under the circumstances, we cannot obtain a strictly positive \textbf{ time-independent} lower bound for $v$. In other word, $v_*>0$ depends on time in general if $\mu>0$ (cf. \cite{FWY2014}). Thus, if $\gamma(s)$ has singularity at $s=0$, we can still replace $\gamma$ by $\tilde{\gamma}$ in the way as illustrated in Remark \ref{singularity0} to consider global existence on any time interval $[0,T]$. But we cannot discuss uniform-in-time boundedness in this case.
\end{remark}
Next, we recall the following lemma given in \cite[Lemma 2.4]{FS2016}.
\begin{lemma}\label{fs}Let $n=2$ and $p\in(1,2)$. There exists $K_{Sob}>0$ such that
for all $s>1$ and for all $t\in [0,T_{\mathrm{max}})$,
\begin{align*}
\io u^{p+1}
\leq
\dfrac{{K_{Sob}(p+1)}^2}{\log s}\io (u\log u +e^{-1})
\io u^{p-2} {|\nabla u|}^2
+6s^{p+1} |\Omega| +4K_{Sob}^2{|\Omega|}^{2-p}\|u_0\|^{p+1}_{L^1(\Omega)}.
\end{align*}
\end{lemma}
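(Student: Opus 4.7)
The estimate is a two-dimensional refinement of the Gagliardo--Nirenberg inequality which couples the $L^{p+1}$-norm of $u$ multiplicatively to the entropy-type quantity $\io(u\log u+e^{-1})$ and the weighted dissipation $\io u^{p-2}|\nabla u|^2$ through the cutoff level $s>1$. The plan is to combine the two-dimensional Sobolev embedding $W^{1,1}(\Omega)\hookrightarrow L^2(\Omega)$ with a level-set splitting that trades mass for entropy via the factor $1/\log s$.

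First I would apply the 2D Sobolev inequality to $w=u^{(p+1)/2}$, using $\|w\|_{L^2}^2\le K_{Sob}(\|w\|_{L^1}+\|\nabla w\|_{L^1})^2$ together with $(a+b)^2\le 2a^2+2b^2$. The $L^1$-norm of $w$ is controlled by H\"older/Jensen and by the conservation $\|u(\cdot,t)\|_{L^1}=\|u_0\|_{L^1}$, yielding $\|w\|_{L^1}^2\le |\Omega|^{2-p}\|u_0\|_{L^1}^{p+1}$; this accounts for the $4K_{Sob}^2|\Omega|^{2-p}\|u_0\|_{L^1}^{p+1}$ remainder in the statement. The gradient contribution is $\|\nabla w\|_{L^1}^2=\bigl(\tfrac{p+1}{2}\bigr)^2\bigl(\io u^{(p-1)/2}|\nabla u|\,dx\bigr)^2$, and Cauchy--Schwarz with the factorisation $u^{(p-1)/2}|\nabla u|=u^{1/2}\cdot u^{(p-2)/2}|\nabla u|$ gives $\|\nabla w\|_{L^1}^2\le \tfrac{(p+1)^2}{4}\,(\io u)\,(\io u^{p-2}|\nabla u|^2)$.

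Second, the logarithmic trick converts the mass factor $\io u$ into entropy. Since $s>1$ forces $u\le u\log u/\log s$ pointwise on $\{u>s\}$, and $u\log u+e^{-1}\ge 0$ everywhere, one obtains $\io u\le s|\Omega|+\tfrac{1}{\log s}\io(u\log u+e^{-1})$. Substituting this into the bound on $\|\nabla w\|_{L^1}^2$ produces the desired main term $\tfrac{K_{Sob}(p+1)^2}{\log s}\io(u\log u+e^{-1})\cdot\io u^{p-2}|\nabla u|^2$. The spurious contribution $s|\Omega|\cdot\io u^{p-2}|\nabla u|^2$ must be avoided by splitting $\io u^{p+1}=\int_{\{u\le s\}}u^{p+1}+\int_{\{u>s\}}u^{p+1}$ \emph{before} invoking Sobolev, so that the embedding controls only the superlevel part while the sublevel part $\le s^{p+1}|\Omega|$ is absorbed into the $6s^{p+1}|\Omega|$ remainder.

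The main obstacle I foresee is precisely making this ordering of estimates rigorous without generating extra factors of $s$ that fail to absorb. A natural technical device is to apply Sobolev to a truncated function such as $w_s=(u-s)_+^{(p+1)/2}$, whose support lies in $\{u>s\}$; one then uses the bound $u^{p+1}\le 2^p((u-s)_+^{p+1}+s^{p+1})$ to relate $\io u^{p+1}$ to $\|w_s\|_{L^2}^2$, with some care required for $p\in(1,2)$ because $(u-s)^{p-2}\ge u^{p-2}$ pointwise. Carefully tracking the constants from the two $(a+b)^2$ expansions and from Cauchy--Schwarz then yields exactly the prefactors $K_{Sob}(p+1)^2$, $6s^{p+1}$, and $4K_{Sob}^2$ stated.
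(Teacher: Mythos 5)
Your overall skeleton---the 2D embedding $W^{1,1}(\Omega)\hookrightarrow L^2(\Omega)$ applied to a power of $u$, a Cauchy--Schwarz factorization producing $(\mathrm{mass})\times\int_\Omega u^{p-2}|\nabla u|^2$, the level-set trick $\int_{\{u>s\}}u\le\frac{1}{\log s}\int_\Omega(u\log u+e^{-1})$, and splitting sub-/super-level sets before invoking Sobolev---is the right one; note that the paper itself does not prove this lemma but quotes it from \cite[Lemma 2.4]{FS2016}, so there is no internal proof to compare against. However, your argument has a genuine gap in the treatment of the zeroth-order Sobolev term. For $w=u^{(p+1)/2}$ (or $w_s=(u-s)_+^{(p+1)/2}$) the claimed bound $\|w\|_{L^1}^2\le|\Omega|^{2-p}\|u_0\|_{L^1(\Omega)}^{p+1}$ is false: since $(p+1)/2>1$, Jensen/H\"older give the \emph{reverse} inequality $\int_\Omega u^{(p+1)/2}\ge|\Omega|^{(1-p)/2}\|u\|_{L^1(\Omega)}^{(p+1)/2}$, and for fixed mass $\int_\Omega u^{(p+1)/2}$ can be made arbitrarily large by concentration; the estimate you invoke would amount to H\"older with exponent $2/(p+1)<1$. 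So the remainder $4K_{Sob}^2|\Omega|^{2-p}\|u_0\|^{p+1}_{L^1(\Omega)}$ cannot be obtained the way you indicate, and this is exactly where an additional idea is needed: for instance, interpolate $\|w_s\|_{L^1}\le\|w_s\|_{L^{2/(p+1)}}^{1/p}\|w_s\|_{L^2}^{(p-1)/p}$, observe that $\|w_s\|_{L^{2/(p+1)}}=\|(u-s)_+\|_{L^1(\Omega)}^{(p+1)/2}\le\|u_0\|_{L^1(\Omega)}^{(p+1)/2}$ (mass control; conservation is available precisely in the $\mu=0$ setting where the lemma is used), and absorb the resulting $\|w_s\|_{L^2}$-contribution into the left-hand side by Young's inequality; equivalently, use a Gagliardo--Nirenberg inequality featuring the sublinear quasinorm $\|w\|_{L^{2/(p+1)}}$ in place of $\|w\|_{L^1}$.

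A second, smaller point: you correctly flag that for $p\in(1,2)$ one cannot use $(u-s)_+^{p-2}\ge u^{p-2}$, but you leave it unresolved. The standard fix is to factor $(u-s)_+^{(p-1)/2}|\nabla u|=\bigl((u-s)_+^{p-1}u^{2-p}\bigr)^{1/2}\bigl(u^{p-2}|\nabla u|^2\bigr)^{1/2}$ and use $(u-s)_+\le u$ together with $p-1>0$, so that Cauchy--Schwarz yields $\|\nabla w_s\|_{L^1}^2\le\frac{(p+1)^2}{4}\bigl(\int_{\{u>s\}}u\bigr)\int_\Omega u^{p-2}|\nabla u|^2$, after which your $\log s$-trick applies on $\{u>s\}$ only, as intended. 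With these two repairs (and constant bookkeeping, the $6s^{p+1}|\Omega|$ term coming from $u^{p+1}\le 2^p\bigl((u-s)_+^{p+1}+s^{p+1}\bigr)$ and the unspecified $K_{Sob}$ swallowing the rest), the proof closes; as written, however, the key lower-order estimate is incorrect.
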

In addition, we need  the following uniform Gronwall inequality \cite[Chapter III, Lemma 1.1]{Temam} to deduce uniform-in-time estimates for the solutions.
\begin{lemma}\label{uniformGronwall}
	Let $g,h,y$ be three positive locally integrable functions on $(t_0,\infty)$ such that $y'$ is locally integrable on $(t_0,\infty)$ and the following inequalities are satisfied:
	\begin{equation*}
	y'(t)\leq g(t)y(t)+h(t)\;\;\forall\;t\geq t_0,
	\end{equation*}
	\begin{equation*}
	\int_t^{t+r}g(s)ds\leq a_1,\;\;\int_t^{t+r}h(s)ds\leq a_2,\;\;\int_t^{t+r}y(s)ds\leq a_3,\;\;\forall \;t\geq t_0
	\end{equation*}	where $r,a_i$, $(i=1,2,3)$ are positive constants. Then
	\begin{equation*}
	y(t+r)\leq \left(\frac{a_3}{r}+a_2\right)e^{a_1},\;\;\forall t\geq t_0.
	\end{equation*}
\end{lemma}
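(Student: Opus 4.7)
The plan is to combine an integrating factor with an averaging step: first derive a pointwise upper bound for $y(t+r)$ in terms of $y(s)$ at an arbitrary $s\in[t,t+r]$, then integrate in $s$ so that the only remaining dependence on $y$ is through its integral mean on $[t,t+r]$, which the third hypothesis controls by $a_3/r$.

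First I would set $G(t)\triangleq\int_{t_0}^t g(\tau)\,d\tau$, which is absolutely continuous and nondecreasing since $g\geq0$, and rewrite the hypothesis $y'\leq gy+h$ in product form as $\frac{d}{dt}\bigl(y(t)e^{-G(t)}\bigr)\leq h(t)e^{-G(t)}$ almost everywhere. Integrating from an arbitrary $s\in[t,t+r]$ up to $t+r$ and multiplying through by $e^{G(t+r)}$ yields
\begin{equation*}
y(t+r)\leq y(s)\,e^{G(t+r)-G(s)}+\int_s^{t+r}h(\tau)\,e^{G(t+r)-G(\tau)}\,d\tau.
\end{equation*}
Monotonicity of $G$ combined with the hypothesis $\int_t^{t+r}g\leq a_1$ bounds each of the two exponents by $a_1$ on the relevant intervals, while $h\geq0$ together with $\int_t^{t+r}h\leq a_2$ controls the remaining $h$-integral by $a_2$. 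This collapses the display to the pointwise bound
\begin{equation*}
y(t+r)\leq e^{a_1}\bigl(y(s)+a_2\bigr)\qquad\text{for every }s\in[t,t+r].
\end{equation*}

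The decisive step is then to average: integrating this inequality in $s$ over $[t,t+r]$ and invoking $\int_t^{t+r}y(s)\,ds\leq a_3$ gives $r\,y(t+r)\leq e^{a_1}(a_3+ra_2)$, and dividing by $r$ produces the stated bound. The only minor technical point is the regularity underlying the integrating factor computation: with $y'$, $g$, and $h$ merely locally integrable, one needs to observe that $y$ and $G$ are absolutely continuous on compact subintervals of $(t_0,\infty)$, being indefinite integrals of locally integrable functions, so that the Leibniz rule for $y\,e^{-G}$ and the fundamental theorem of calculus apply in the almost everywhere sense. Once this standard preliminary is in place, the remainder of the argument is pure monotonicity and integration, and the real cleverness sits entirely in introducing the free parameter $s$ and then averaging it out.
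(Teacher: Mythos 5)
Your argument is correct and is essentially the classical proof of the uniform Gronwall lemma: the paper does not prove this statement itself but quotes it from Temam's book, and the proof given there is exactly your integrating-factor bound $y(t+r)\leq e^{a_1}\bigl(y(s)+a_2\bigr)$ for each $s\in[t,t+r]$ followed by averaging in $s$. The only caveat is that your remark that $y$ is an indefinite integral of $y'$ is really the (standard, implicit) interpretation of the hypotheses as $y$ being locally absolutely continuous, rather than a consequence of $y'$ being locally integrable alone; under that usual reading the proof is complete.
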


\section{Point-wise Upper Bounds for $v$}
In this section, we derive point-wise upper bounds for $v$ which is the key step of our studies.
\begin{lemma}\label{keylem1}Assume $n\geq1$. For any $t<T_{\mathrm{max}}$, there holds
	\begin{equation}\label{var0}
	v_t+\gamma(v)u+\mu(I-\Delta)^{-1}[u^2]=(I-\Delta)^{-1}[\gamma(v)u+\mu u].
	\end{equation}
Moreover, for  any $x\in\Omega$ and  $t\in[0,T_{\mathrm{max}})$, we have
\begin{equation}\label{keyA}
v(t,x)\leq v_0(x)+\int_0^t(I-\Delta)^{-1}[\gamma(v)u+\mu u]ds,
\end{equation}where $v_0\triangleq(I-\Delta)^{-1}u_0$.
\end{lemma}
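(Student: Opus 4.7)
The plan is to start from the elliptic equation $(I-\Delta)v = u$ and differentiate in time to transfer the first equation into an identity for $v_t$. Since $(u,v)$ is a classical solution on $[0,T_{\max})$, the time regularity is enough to differentiate $v = (I-\Delta)^{-1} u$ (with homogeneous Neumann conditions), yielding $(I-\Delta) v_t = u_t$. Substituting the first equation of \eqref{chemo1} gives
\begin{equation*}
(I-\Delta) v_t = \Delta(\gamma(v) u) + \mu u - \mu u^2.
\end{equation*}

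Next I would use the elementary algebraic rewriting $\Delta w = w - (I-\Delta)w$ applied to $w = \gamma(v) u$, so that
\begin{equation*}
(I-\Delta) v_t = \gamma(v) u - (I-\Delta)[\gamma(v) u] + \mu u - \mu u^2.
\end{equation*}
Applying $(I-\Delta)^{-1}$ to both sides (with homogeneous Neumann boundary condition, which is compatible because $\partial_\nu v_t = 0$ by differentiating the Neumann condition on $v$) and rearranging produces exactly the identity \eqref{var0}. This step is essentially algebraic and should not present serious difficulty, provided one checks that $\gamma(v)u \in C^{2,1}$ so that $(I-\Delta)^{-1}$ is being applied to a sufficiently regular function.

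For the pointwise inequality \eqref{keyA}, I would invoke two positivity facts: first, $\gamma(v)u \geq 0$ since $u \geq 0$ (by standard parabolic comparison applied to the first equation, with $\gamma(v) > 0$) and $\gamma > 0$ by (A1); second, $(I-\Delta)^{-1}[u^2] \geq 0$ because $w := (I-\Delta)^{-1}[u^2]$ solves $-\Delta w + w = u^2 \geq 0$ in $\Omega$ with $\partial_\nu w = 0$ on $\partial\Omega$, so the elliptic maximum principle for Neumann problems forces $w \geq 0$. Rearranging \eqref{var0} therefore gives
\begin{equation*}
v_t(x,t) \leq (I-\Delta)^{-1}[\gamma(v) u + \mu u](x,t).
\end{equation*}

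Finally I would integrate this differential inequality pointwise in $t$ from $0$ to any $t < T_{\max}$, using $v(x,0) = (I-\Delta)^{-1} u_0(x) = v_0(x)$ as the initial value, to deduce \eqref{keyA}. The only place where one has to be a little careful is to confirm that $v$ has enough time regularity to use the fundamental theorem of calculus pointwise in $x$; this follows from the fact that $(u,v) \in (C^{2,1}(\overline{\Omega}\times(0,T_{\max})))^2$ given by Theorem \ref{local}, together with continuity up to $t=0$ of $v$ from $C^0(\overline{\Omega}\times[0,T_{\max}))$. No substantive obstacle is expected; the whole proof is essentially an exploitation of the elliptic structure of the $v$-equation.
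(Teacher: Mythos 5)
Your proposal is correct and follows essentially the same route as the paper: substituting the elliptic relation $(I-\Delta)v=u$ into the first equation (equivalently, differentiating it in time), applying $(I-\Delta)^{-1}$ with the rewriting $\Delta w = w-(I-\Delta)w$ to obtain \eqref{var0}, and then using the nonnegativity of $\gamma(v)u$ and of $(I-\Delta)^{-1}[u^2]$ (via the elliptic maximum principle) before integrating in time to get \eqref{keyA}. No discrepancies with the paper's argument.
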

\begin{proof}First, a substitution of the second equation into the first one yields that
	\begin{equation}\label{var}
		-\Delta v_t+v_t=\Delta (\gamma(v)u)+\mu u(1-u).
	\end{equation}
Then taking $(I-\Delta)^{-1}$ on both sides of the above equality, we obtain the following key identity:
\begin{equation}\label{var0a}
	v_t+\gamma(v)u+\mu(I-\Delta)^{-1}[u^2]=(I-\Delta)^{-1}[\gamma(v)u+\mu u].
\end{equation} 
In addition, we observe that $\gamma(v)u\geq0$ and due to the maximum principle, $(I-\Delta )^{-1}[u^2]$ is non-negative as well. 
Then \eqref{keyA} follows from a direct integration with respect to time.\end{proof}
Thanks to the preceding lemma, one easily deduce the following point-wise upper bound for $v$.
\begin{lemma}\label{pte1}Assume $n\geq1.$ For any $x\in\Omega$ and $0\leq t<T_{\mathrm{max}}$, there  holds
	\begin{equation}\label{ptest}
	v(x,t)\leq v_0(x)e^{(\gamma(v_*)+\mu)t}.
	\end{equation}
\end{lemma}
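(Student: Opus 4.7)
The plan is to deduce the pointwise upper bound as an almost immediate consequence of the key identity \eqref{var0} just proved in Lemma~\ref{keylem1}, combined with the monotonicity of $\gamma$, the lower bound for $v$ supplied by Lemma~\ref{lowerbound}, and the positivity-preserving property of the resolvent $(I-\Delta)^{-1}$.

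First, I would exploit that $\gamma$ is nonincreasing on $[0,\infty)$. Since $v(x,t)\geq v_*>0$ uniformly by Lemma~\ref{lowerbound}, it follows pointwise that $\gamma(v)\leq \gamma(v_*)$, and hence
\[
\gamma(v)u+\mu u\;\leq\;\bigl(\gamma(v_*)+\mu\bigr)u \qquad \text{in } \Omega\times[0,T_{\mathrm{max}}).
\]
Applying $(I-\Delta)^{-1}$ with homogeneous Neumann boundary data, which preserves pointwise inequalities by the elliptic maximum principle, and observing that the second equation of \eqref{chemo1} gives exactly $(I-\Delta)^{-1}[u]=v$, I obtain
\[
(I-\Delta)^{-1}\bigl[\gamma(v)u+\mu u\bigr]\;\leq\;\bigl(\gamma(v_*)+\mu\bigr)v.
\]

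Next, I would insert this into the key identity \eqref{var0}. Both quantities $\gamma(v)u\geq 0$ and $\mu(I-\Delta)^{-1}[u^2]\geq 0$ on the left-hand side are nonnegative (the second one again by the maximum principle, since $-\Delta w+w=u^2\geq 0$ forces $w\geq 0$), so they can be discarded to yield the pointwise scalar ODE inequality
\[
\partial_t v(x,t)\;\leq\;\bigl(\gamma(v_*)+\mu\bigr)v(x,t), \qquad x\in\Omega,\;\; 0\leq t<T_{\mathrm{max}}.
\]
Integrating in $t$ with $x$ held fixed and using $v(\cdot,0)=v_0=(I-\Delta)^{-1}u_0$ then produces the desired estimate $v(x,t)\leq v_0(x)e^{(\gamma(v_*)+\mu)t}$.

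I do not anticipate any real obstacle in this step: the entire novelty has already been absorbed into Lemma~\ref{keylem1}, and the present lemma simply harvests it via monotonicity, positivity, and scalar Gronwall. The one point that warrants a line of care is the justification that $(I-\Delta)^{-1}$ with Neumann data is order-preserving on continuous data, which is standard but worth citing; everything else is routine.
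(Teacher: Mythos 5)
Your argument is correct and essentially the paper's own proof: both rest on $\gamma(v)\leq\gamma(v_*)$, the order-preserving property of $(I-\Delta)^{-1}$ together with $(I-\Delta)^{-1}[u]=v$, discarding the nonnegative terms $\gamma(v)u$ and $\mu(I-\Delta)^{-1}[u^2]$, and Gronwall (you use the differential form via \eqref{var0}, the paper the integral form via \eqref{keyA} — an immaterial difference). The only point to tidy is your appeal to Lemma \ref{lowerbound} for $v\geq v_*>0$: that lemma is stated for $\mu=0$, so for $\mu>0$ one should simply take $v_*=0$ and use $v\geq0$ with $\gamma$ nonincreasing on $[0,\infty)$, exactly as the paper's convention does.
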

\begin{proof} Note that $v$ is non-negative due to the maximum principles.  Since $\gamma$ is non-increasing in $v$, there holds $\gamma(v)\leq \gamma(v_*)$ for all $(x,t)\in\Omega\times[0,T_\mathrm{max})$. Here and in the sequel, we assume $v_*=0$ if $\mu>0$  and $v_*>0$ if $\mu=0$ due to Lemma \ref{lowerbound}. Thus, there holds $0<\gamma(v)u\leq \gamma(v_*)u$ for any $(x,t)\in\Omega\times[0,T_{\mathrm{max}})$. Then applying the comparison principle, we deduce from the second equation of  \eqref{chemo1} that \begin{equation*}
		0\leq(I-\Delta)^{-1}[\gamma(v)u]\leq (I-\Delta)^{-1}[\gamma(v_*)u]=\gamma(v_*)v.
	\end{equation*}
As a result, we obtain from \eqref{keyA} that for any $(x,t)\in\Omega\times[0,T_{\mathrm{max}})$,
\begin{align}\nn
	v(x,t)\leq v_0(x)+\left (\gamma(v_*)+\mu\right)\int_0^tv(s,x) ds
\end{align}	
which entails \eqref{ptest} by Gronwall's inequality. This completes the proof.
\end{proof}
Therefore, $v(t,x)$ grows at an exponential rate in time at most for any $n\geq1.$ For $n\leq3$, we can improve the above estimates thanks to the Sobolev embeddings. To this aim, we need to derive some estimates for $u\gamma(v)$. 
\begin{lemma}\label{est0}Assume $n\geq1$ and $\mu=0$. There exist $C>0$ depending on the $\|u_0\|_{L^1(\Omega)}$ and $\Omega$ such that for any $t\in[0,T_{\mathrm{max}})$,
	\begin{equation}\label{L2u}
	\|u(t)-\overline{u_0}\|_{H^{-1}}^2+\|v(t)\|_{H^1(\Omega)}^2+\int_0^t\int_\Omega \gamma(v)u^2dxds\leq 2\|u_0-\overline{u_0}\|^2_{H^{-1}(\Omega)}+2\overline{u_0}^2|\Omega| +Ct,
	\end{equation}
	where $\overline{\varphi}\triangleq\frac{1}{|\Omega|}\int_\Omega \varphi dx$ for any $\varphi\in L^1(\Omega)$.	
\end{lemma}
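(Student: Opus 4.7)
The plan is to adapt the classical $H^{-1}$-energy method to the degenerate diffusion $\Delta(\gamma(v)u)$. The decisive point is to test the evolution equation against the $H^{-1}$-representative of $u-\overline{u_0}$ rather than against $u$ or $v$, so that the source term becomes proportional to $\overline{u_0}\int_\Omega\gamma(v)u\,dx$, which mass conservation bounds \emph{linearly} in $t$.

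Since $\mu=0$, Theorem \ref{local} gives $\int_\Omega u(t)\,dx=\int_\Omega u_0\,dx$, and the Neumann elliptic equation $-\Delta v+v=u$ implies $\overline{v(t)}=\overline{u(t)}=\overline{u_0}$ for all $t\in[0,T_{\mathrm{max}})$. Introduce $\phi(\cdot,t)$ as the unique mean-zero solution of
\begin{equation*}
-\Delta\phi=u-\overline{u_0}\ \text{in }\Omega,\qquad \partial_\nu\phi=0\ \text{on }\partial\Omega,
\end{equation*}
so that $\|u-\overline{u_0}\|_{H^{-1}}^2:=\|\nabla\phi\|_{L^2(\Omega)}^2=\int_\Omega(u-\overline{u_0})\phi\,dx$. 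Differentiating and using the self-adjointness of the inverse Neumann Laplacian on mean-zero functions, one obtains $\tfrac12\tfrac{d}{dt}\|u-\overline{u_0}\|_{H^{-1}}^2=\int_\Omega u_t\,\phi\,dx$. Substituting $u_t=\Delta(\gamma(v)u)$ and integrating by parts twice (the boundary terms vanish because $\partial_\nu\phi=0$ and $\partial_\nu(\gamma(v)u)=\gamma'(v)u\,\partial_\nu v+\gamma(v)\partial_\nu u=0$) produces the key identity
\begin{equation*}
\tfrac12\tfrac{d}{dt}\|u-\overline{u_0}\|_{H^{-1}}^2+\int_\Omega\gamma(v)u^2\,dx=\overline{u_0}\int_\Omega\gamma(v)u\,dx.
\end{equation*}
Lemma \ref{lowerbound} yields $v\geq v_*>0$, so the monotonicity of $\gamma$ and mass conservation give $\overline{u_0}\int_\Omega\gamma(v)u\,dx\leq\gamma(v_*)\overline{u_0}^2|\Omega|$, and time integration then produces
\begin{equation*}
\|u(t)-\overline{u_0}\|_{H^{-1}}^2+2\int_0^t\int_\Omega\gamma(v)u^2\,dx\,ds\leq\|u_0-\overline{u_0}\|_{H^{-1}}^2+2\gamma(v_*)\overline{u_0}^2|\Omega|\,t.
\end{equation*}

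To bring $\|v\|_{H^1(\Omega)}^2$ onto the left, I would test $-\Delta v+v=u$ by $v$ to obtain $\|v\|_{H^1(\Omega)}^2=\int_\Omega uv\,dx=\int_\Omega(u-\overline{u_0})(v-\overline{u_0})\,dx+\overline{u_0}^2|\Omega|$, then rewrite the first integral using $-\Delta\phi=u-\overline{u_0}$ and $\partial_\nu\phi=0$ as $\int_\Omega\nabla\phi\cdot\nabla v\,dx$. Cauchy--Schwarz and Young's inequality yield $\|v\|_{H^1(\Omega)}^2\leq\tfrac12\|u-\overline{u_0}\|_{H^{-1}}^2+\tfrac12\|v\|_{H^1(\Omega)}^2+\overline{u_0}^2|\Omega|$, which absorbs into the comparison $\|v\|_{H^1(\Omega)}^2\leq\|u-\overline{u_0}\|_{H^{-1}}^2+2\overline{u_0}^2|\Omega|$. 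Summing this with the integrated energy inequality and discarding the resulting surplus $-3\int_0^t\!\int_\Omega\gamma(v)u^2$, which is nonpositive, yields the stated estimate with $C=4\gamma(v_*)\overline{u_0}^2|\Omega|$, a constant depending only on $\|u_0\|_{L^1(\Omega)}$ (through $\overline{u_0}$ and $v_*$) and $\Omega$.

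The chief obstacle is structural rather than computational: the naive test of the first equation against $v$ produces $\tfrac12\tfrac{d}{dt}\int_\Omega uv\,dx+\int_\Omega\gamma(v)u^2\,dx=\int_\Omega\gamma(v)uv\,dx$, and estimating the right-hand side by $\gamma(v_*)\int_\Omega uv\,dx=\gamma(v_*)\|v\|_{H^1(\Omega)}^2$ only delivers an exponential-in-$t$ Gronwall bound. The linear-in-$t$ growth asserted in the lemma forces one to test against an object whose source is controlled by $\int_\Omega u\,dx$ rather than $\int_\Omega uv\,dx$, and this is exactly the function of the $H^{-1}$-representative $\phi$; the Young-inequality step then reconciles the mismatch between $\phi$ and $v-\overline{u_0}$, which solve different elliptic problems.
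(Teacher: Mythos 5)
Your proposal is correct and follows essentially the same route as the paper: you test the first equation against the Neumann $H^{-1}$-representative $\phi=(-\Delta)^{-1}(u-\overline{u_0})$ to get the identity $\tfrac12\tfrac{d}{dt}\|u-\overline{u_0}\|_{H^{-1}}^2+\int_\Omega\gamma(v)u^2\,dx=\overline{u_0}\int_\Omega\gamma(v)u\,dx$, bound the right-hand side by $\gamma(v_*)\overline{u_0}^2|\Omega|$ via mass conservation and Lemma \ref{lowerbound}, integrate in time, and then control $\|v\|_{H^1(\Omega)}^2=\int_\Omega uv\,dx$ by $\|u-\overline{u_0}\|_{H^{-1}(\Omega)}\|v\|_{H^1(\Omega)}+\overline{u_0}^2|\Omega|$ and Young's inequality, exactly as in the paper's proof. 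The only cosmetic differences (making the duality pairing explicit through $\nabla\phi\cdot\nabla v$, and the bookkeeping that discards the harmless surplus dissipation term) do not change the argument.
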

\begin{proof} First, one verifies that if $\mu=0$, $\overline{u}(t)=\overline{v}(t)=\overline{u_0}$.
	Multiplying the first equation by $(-\Delta)^{-1}(u-\overline{u_0})$ and integrating over $\Omega$, we obtain that
	\begin{equation}
	\frac{1}{2}\frac{d}{dt}\|(-\Delta)^{-\frac12}(u-\overline{u_0})\|_{L^2(\Omega)}^2+\int_\Omega \gamma(v)u^2dx=\overline{u_0}\int_\Omega \gamma(v)udx.\nn
	\end{equation}	
Thanks to the fact that $\gamma(v)\leq\gamma (v_*)$, we conclude that
	\begin{equation*}
	\frac{1}{2}\frac{d}{dt}\|(-\Delta)^{-\frac12}(u-\overline{u_0})\|_{L^2(\Omega)}^2+\int_\Omega \gamma(v)u^2dx\leq \gamma(v_*)\overline{u_0}^2|\Omega|,
	\end{equation*}which  by a direct integration with respect to time implies that for any $t\in(0,T_{\mathrm{max}})$
	\begin{equation*}
	\|(-\Delta)^{-\frac12}(u(t)-\overline{u_0})\|_{L^2(\Omega)}^2+2\int_0^t\int_\Omega \gamma(v)u^2dx\leq \|(-\Delta)^{-\frac12}(u_0-\overline{u_0})\|_{L^2(\Omega)}^2+2 \gamma(v_*)\overline{u_0}^2|\Omega|t.
	\end{equation*}
	On the other hand, we observe from the second equation of \eqref{chemo1} that
	\begin{align*}
	\|v\|_{H^1(\Omega)}^2=&\int_\Omega (|\nabla v|^2+v^2)dx\non\\
	=&\int_\Omega uvdx\non\\
	=&\int_\Omega (u-\overline{u_0})vdx+\overline{u_0}^2|\Omega|\non\\
	\leq& \|u-\overline{u_0}\|_{H^{-1}(\Omega)}\|v\|_{H^1(\Omega)}+\overline{u_0}^2|\Omega|.
	\end{align*} 
	Thus, by Young's inequality, we obtain that
	\begin{equation*}
	\|v\|_{H^1(\Omega)}^2\leq \|u-\overline{u_0}\|^2_{H^{-1}(\Omega)}+2\overline{u_0}^2|\Omega|,
	\end{equation*}	which completes the proof.
\end{proof}
On the other hand, when $\mu>0$, one can derive the following estimates.
\begin{lemma} \label{mu1}Assume $n\geq1$ and $\mu>0$.
	Let $(u,v)$ be a classical solution of system \eqref{chemo1} on $\Omega\times (0,T_{\mathrm{max}}).$ Then there is $C>0$ depending only on $\|u_0\|_{L^1(\Omega)}$ and $\Omega$ such that
	\begin{equation}\label{unib5}
	\sup\limits_{0<t<T_{\mathrm{max}}}\int_\Omega u dx\leq \max\left\{\int_\Omega u_0,|\Omega|\right\},
	\end{equation}and for any $t\in(0,T_{\mathrm{max}}-\tau)$ with any fixed $0<\tau<\min\{1,T_{\mathrm{max}}/2\}$,
	\begin{equation}\label{unib4}
		\int_t^{t+\tau}\int_\Omega u^2dxds\leq C+C/\mu.
	\end{equation}
Moreover, we have
	\begin{equation}\label{unil2b}
	\sup\limits_{0<t<T_{\mathrm{max}}}\int_0^t e^{\mu(s-t)}\|u(s)\|_{L^2(\Omega)}^2ds\leq C/\mu.
	\end{equation}
\end{lemma}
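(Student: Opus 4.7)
The three estimates all follow from a single ODE obtained by integrating the first equation of \eqref{chemo1} over $\Omega$ and using the no-flux boundary condition:
\begin{equation*}
\frac{d}{dt}\int_\Omega u\,dx + \mu \int_\Omega u^2\,dx \;=\; \mu \int_\Omega u\,dx. \tag{$\ast$}
\end{equation*}
No other structural information of the system is needed — neither the second equation nor the regularity of $\gamma$ enters.

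For \eqref{unib5}, Jensen's inequality gives $\int_\Omega u^2\,dx \geq \frac{1}{|\Omega|}\bigl(\int_\Omega u\bigr)^2$, so with $y(t):=\int_\Omega u(\cdot,t)\,dx$ the identity $(\ast)$ yields the logistic differential inequality $y'(t) \leq \mu y(t)\bigl(1 - y(t)/|\Omega|\bigr)$. In particular $y'(t)\leq 0$ whenever $y(t)\geq |\Omega|$, so a standard comparison argument shows $y(t)\leq \max\{y(0),|\Omega|\}$ for every $t\in(0,T_{\mathrm{max}})$, which is exactly \eqref{unib5}.

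For \eqref{unib4}, I would rewrite $(\ast)$ as $\mu\int_\Omega u^2\,dx = \mu\int_\Omega u\,dx - \frac{d}{dt}\int_\Omega u\,dx$ and integrate on $[t,t+\tau]$. The telescoping term contributes $\int_\Omega u(\cdot,t)\,dx - \int_\Omega u(\cdot,t+\tau)\,dx$, and both the remaining time integral and this boundary term are bounded by $\max\{\|u_0\|_{L^1},|\Omega|\}$ thanks to \eqref{unib5}. Dividing by $\mu$ and using $\tau\leq 1$ delivers \eqref{unib4}.

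For \eqref{unil2b}, with $t$ fixed I would multiply $(\ast)$ (evaluated at time $s$) by $e^{\mu(s-t)}$ and integrate over $s\in[0,t]$. An integration by parts converts $\int_0^t e^{\mu(s-t)}\frac{d}{ds}\int_\Omega u\,dx\,ds$ into the boundary term $\int_\Omega u(\cdot,t)\,dx - e^{-\mu t}\int_\Omega u_0\,dx$ minus $\mu\int_0^t e^{\mu(s-t)}\int_\Omega u\,dx\,ds$. Rearranging gives
\begin{equation*}
\mu\int_0^t e^{\mu(s-t)}\|u(s)\|_{L^2(\Omega)}^2\,ds \;=\; 2\mu\int_0^t e^{\mu(s-t)}\!\int_\Omega u\,dx\,ds \;+\; e^{-\mu t}\int_\Omega u_0\,dx \;-\; \int_\Omega u(\cdot,t)\,dx.
\end{equation*}
Dropping the last (nonpositive) term, applying \eqref{unib5} to bound $\int_\Omega u(\cdot,s)\,dx$ by $M:=\max\{\|u_0\|_{L^1(\Omega)},|\Omega|\}$ and computing $\int_0^t e^{\mu(s-t)}\,ds = (1-e^{-\mu t})/\mu\leq 1/\mu$ yields $\mu\int_0^t e^{\mu(s-t)}\|u(s)\|_{L^2}^2\,ds \leq 3M$, which is \eqref{unil2b}.

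There is no genuine analytical obstacle; the calculation is essentially a consequence of $(\ast)$ and the logistic structure of the mass equation. The only care points are the sign of $y' $ in the logistic argument and a clean integration by parts in the weighted estimate.
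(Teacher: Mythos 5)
Your proposal is correct and rests on the same basic ingredient as the paper, namely the mass identity $\frac{d}{dt}\int_\Omega u\,dx+\mu\int_\Omega u^2\,dx=\mu\int_\Omega u\,dx$ obtained by integrating the first equation of \eqref{chemo1} with the no-flux boundary conditions. The only differences are cosmetic: the paper cites \cite[Lemma 2.2]{JKW18} for \eqref{unib5} and \eqref{unib4} (which you instead prove directly via the logistic comparison), and for \eqref{unil2b} it works with the weight $e^{\mu t}$ and absorbs the term $2\mu e^{\mu t}\int_\Omega u\,dx$ by Young's inequality, whereas you bound that linear term using the already established mass bound after an integration by parts — both are minor variants of the same computation and yield the stated constants.
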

\begin{proof} The former two assertions were given in \cite[Lemma 2.2]{JKW18}. Integrating over $\Omega$, adding $\mu\int_\Omega udx$ to both sides of the first equation in \eqref{chemo1}, and applying Young's inequality, we obtain that
	\begin{equation*}
	\frac{d}{dt}\left[e^{\mu t}\int_\Omega udx\right]+\mu e^{\mu t}\int_\Omega u^2dx=2\mu e^{\mu t}\int_\Omega udx\leq \frac{\mu}{2}e^{\mu t}\int_\Omega u^2dx+2\mu|\Omega|e^{\mu t}.
	\end{equation*}
	Then an integration of the above inequality with respect to time entails that
	\begin{align*}
	e^{\mu t}\int_\Omega u(t)dx+\frac{\mu}{2}\int_0^t e^{\mu s}\|u(s)\|_{L^2(\Omega)}^2ds	\leq \int_\Omega u_0dx+ 2|\Omega|(e^{\mu t}-1)
	\end{align*}	
	which yields \eqref{unil2b} by dividing the above inequality by $e^{\mu t}$. This completes the proof.	
\end{proof}
With the above two lemmas at hand, we can improve the point-wise upper bound for $v$ when $n\leq3.$ First, if $\mu=0$, we prove that the growth rate of $v$ is at most linear in time.
\begin{lemma}
	Assume $n\leq3$ and $\mu=0$. There exists $C>0$ depending only on $\|u_0\|_{L^1(\Omega)}$ and $\Omega$ such that for any $(x,t)\in\Omega\times[0,T_{\mathrm{max}})$
	\begin{equation*}
	v(x,t)\leq v_0(x)+C\|u_0-\overline{u_0}\|^2_{H^{-1}(\Omega)}+C(t+1).
	\end{equation*}
\end{lemma}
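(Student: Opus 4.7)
The plan is to combine the integral inequality from Lemma~\ref{keylem1} (which, for $\mu=0$, reads $v(x,t)\leq v_0(x)+\int_0^t(I-\Delta)^{-1}[\gamma(v)u]\,ds$) with the dissipation estimate from Lemma~\ref{est0} via Sobolev embedding in low dimensions. The crucial observation is that for $n\leq 3$ one has $W^{2,2}(\Omega)\hookrightarrow L^\infty(\Omega)$, so standard elliptic regularity for the Neumann problem for $I-\Delta$ yields
\[
\|(I-\Delta)^{-1}[f]\|_{L^\infty(\Omega)}\leq C_\Omega\|f\|_{L^2(\Omega)}
\]
for every $f\in L^2(\Omega)$.

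First, I would apply this bound with $f=\gamma(v)u\geq 0$, so that
\[
\int_0^t(I-\Delta)^{-1}[\gamma(v)u](x,s)\,ds\leq C_\Omega\int_0^t\|\gamma(v)u\|_{L^2(\Omega)}\,ds
\]
uniformly in $x\in\Omega$. Next, since $\gamma$ is non-increasing and $v\geq v_*\geq 0$ by Lemma~\ref{lowerbound}, we have $\gamma(v)\leq\gamma(v_*)$, and therefore $\gamma^2(v)u^2\leq\gamma(v_*)\gamma(v)u^2$. Applying the Cauchy--Schwarz inequality in the time variable gives
\[
\int_0^t\|\gamma(v)u\|_{L^2(\Omega)}\,ds\leq\sqrt{t}\left(\int_0^t\!\!\int_\Omega\gamma^2(v)u^2\,dx\,ds\right)^{1/2}\leq\sqrt{\gamma(v_*)t}\left(\int_0^t\!\!\int_\Omega\gamma(v)u^2\,dx\,ds\right)^{1/2}.
\]

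Now I would invoke Lemma~\ref{est0}, which provides
\[
\int_0^t\!\!\int_\Omega\gamma(v)u^2\,dx\,ds\leq 2\|u_0-\overline{u_0}\|_{H^{-1}(\Omega)}^{2}+2\overline{u_0}^{\,2}|\Omega|+Ct,
\]
and finish with Young's inequality $\sqrt{ab}\leq\tfrac12(a+b)$ applied to $\sqrt{t\cdot(\cdots)}$ in order to convert the square root of a linear-in-$t$ quantity into a term of the form $C\|u_0-\overline{u_0}\|_{H^{-1}(\Omega)}^{2}+C(t+1)$. Combining with Lemma~\ref{keylem1} then yields the desired point-wise upper bound. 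I do not foresee a real obstacle here; the only delicate point is to make sure the constants depend only on $\|u_0\|_{L^1(\Omega)}$ and $\Omega$ (and not on $\|u_0\|_{H^{-1}}$ separately), which is transparent from the structure of Lemmas~\ref{est0} and~\ref{lowerbound}, since $v_*$ depends only on $\|u_0\|_{L^1}$ and $\Omega$, and $\overline{u_0}=|\Omega|^{-1}\|u_0\|_{L^1}$.
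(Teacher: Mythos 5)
Your proposal is correct and follows essentially the same route as the paper: the pointwise bound \eqref{keyA}, the elliptic estimate $\|(I-\Delta)^{-1}[\gamma(v)u]\|_{L^\infty(\Omega)}\leq C\|\gamma(v)u\|_{L^2(\Omega)}$ valid for $n\leq3$, the bound $\gamma^2(v)u^2\leq\gamma(v_*)\gamma(v)u^2$, and the space-time estimate of Lemma \ref{est0}. The only cosmetic difference is that you pass from the $L^1$-in-time to the $L^2$-in-time quantity via Cauchy--Schwarz in time plus Young at the end, whereas the paper uses the pointwise-in-time inequality $\|\gamma(v)u\|_{L^2(\Omega)}\leq\|\gamma(v)u\|_{L^2(\Omega)}^2+1$; both yield the same linear-in-$t$ bound.
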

\begin{proof}
	Thanks to Lemma \ref{est0} and the three-dimensional Sobolev embedding theorem, we infer that
	\begin{align*}
		\int_0^t (I-\Delta)^{-1}[\gamma(v)u]ds\leq&\int_0^t\|(I-\Delta)^{-1}[\gamma(v)u]\|_{L^\infty(\Omega)}ds\non\\
		\leq&C\int_0^t\|\gamma(v)u\|_{L^2(\Omega)}ds\non\\
		\leq&C\int_0^t(\|\gamma(v)u\|_{L^2(\Omega)}^2+1)ds\non\\
		\leq&C\gamma(v_*)\int_0^t\int_\Omega \gamma(v)u^2dxds+Ct\non\\
		\leq&C\|u_0-\overline{u_0}\|^2_{H^{-1}(\Omega)}+C(t+1).
	\end{align*}
	Therefore, invoking \eqref{keyA}, we deduce that
	\begin{equation}
		v(x,t)\leq v_0(x)+C\|u_0-\overline{u_0}\|^2_{H^{-1}(\Omega)}+C(t+1).\non
	\end{equation}This completes the proof.
\end{proof}
In contrast, if $\mu>0$, invoking Lemma \ref{mu1}, we may prove the following uniform-in-time upper bound for $v$.
\begin{lemma}\label{pte2}
	Assume $n\leq3$ and $\mu>0$. There exists $C>0$ depending only on $\|u_0\|_{L^1(\Omega)}$ and $\Omega$ such that for any $(x,t)\in\Omega\times[0,T_{\mathrm{max}})$
	\begin{equation*}
		v(x,t)\leq v_0(x)+1+\frac{C}{\mu}.
	\end{equation*}
\end{lemma}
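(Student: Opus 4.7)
The plan is to convert the key identity in Lemma \ref{keylem1} into a pointwise logistic-type differential inequality for $v$ and then invoke a scalar ODE comparison. Since $\mu>0$, the maximum principle applied to $-\Delta v+v=u\ge0$ gives $v\ge0$, so by monotonicity of $\gamma$ we have $\gamma(v)\le\gamma(0)$ throughout $\Omega\times[0,T_{\mathrm{max}})$. Writing the key identity as
\[
v_t+\gamma(v)u+\mu(I-\Delta)^{-1}[u^2]=(I-\Delta)^{-1}[\gamma(v)u+\mu u],
\]
the comparison principle for the Neumann operator $(I-\Delta)^{-1}$ together with $(I-\Delta)^{-1}[u]=v$ yields
\[
(I-\Delta)^{-1}[\gamma(v)u+\mu u]\le(\gamma(0)+\mu)(I-\Delta)^{-1}[u]=(\gamma(0)+\mu)\,v.
\]

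The decisive step is a matching pointwise lower bound $(I-\Delta)^{-1}[u^2]\ge v^2$. Let $G(x,y)\ge0$ denote the Green's function of $I-\Delta$ on $\Omega$ with homogeneous Neumann boundary conditions. Because the constant function $1$ satisfies $(I-\Delta)[1]=1$ and $\partial_\nu 1=0$, we have $\int_\Omega G(x,y)\,dy\equiv1$, so $G(x,\cdot)$ is a probability density on $\Omega$. The Cauchy--Schwarz inequality (equivalently, Jensen's inequality for the probability measure $G(x,y)\,dy$) then gives, for every $x\in\Omega$,
\[
v(x,t)^2=\Bigl(\int_\Omega G(x,y)u(y,t)\,dy\Bigr)^2\le\int_\Omega G(x,y)\,dy\cdot\int_\Omega G(x,y)u(y,t)^2\,dy=(I-\Delta)^{-1}[u^2](x,t).
\]

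Combining the two bounds and dropping the nonnegative term $\gamma(v)u$ on the left-hand side of the key identity, I obtain the pointwise differential inequality
\[
v_t(x,t)+\mu\,v(x,t)^2\le(\gamma(0)+\mu)\,v(x,t),\qquad(x,t)\in\Omega\times[0,T_{\mathrm{max}}).
\]
Fixing $x\in\Omega$ and setting $M:=1+\gamma(0)/\mu$, this reads $v_t\le\mu\,v(M-v)$, a scalar logistic inequality whose scalar ODE supersolution through the datum $v_0(x)$ stays bounded by $\max\{v_0(x),M\}$ for all later times. Standard ODE comparison therefore yields $v(x,t)\le\max\{v_0(x),M\}\le v_0(x)+1+\gamma(0)/\mu$, which is the desired bound with $C:=\gamma(0)$. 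I do not expect an essential obstacle: the only non-routine ingredient is the Jensen-type inequality $(I-\Delta)^{-1}[u^2]\ge v^2$, which rests solely on the Neumann Green's function having total mass one and is thus dimension-independent; the assumption $n\le3$ is not actually needed for this argument and appears only to align the statement with the preceding lemmas.
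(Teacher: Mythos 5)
Your proof is correct, but it follows a genuinely different route from the paper. The paper linearizes the absorption: it uses $2u\le u^2+1$ and the comparison principle to get $(I-\Delta)^{-1}[u^2]\ge 2v-1$, which turns the key identity into the linearly damped inequality $v_t+\mu v+\gamma(v)u\le (I-\Delta)^{-1}[\gamma(v)u]+\mu$; the Duhamel term $e^{-\mu t}\int_0^t e^{\mu s}(I-\Delta)^{-1}[\gamma(v)u]\,ds$ is then controlled by the Sobolev embedding $H^2(\Omega)\hookrightarrow L^\infty(\Omega)$ (this is where $n\le3$ enters) combined with the time-weighted $L^2$ estimate $\int_0^t e^{\mu(s-t)}\|u(s)\|_{L^2}^2\,ds\le C/\mu$ from Lemma \ref{mu1}. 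You instead extract a quadratic absorption $(I-\Delta)^{-1}[u^2]\ge v^2$ from the Jensen/Cauchy--Schwarz inequality for the Neumann Green's function (nonnegative by the maximum principle, with unit mass since $(I-\Delta)^{-1}[1]=1$; equivalently one can check that $v^2$ is a subsolution, since $-\Delta(v^2)+v^2=2uv-2|\nabla v|^2-v^2\le u^2$ with $\partial_\nu(v^2)=0$), and close with a purely pointwise logistic ODE comparison. This avoids Lemma \ref{mu1}, elliptic $L^2$--$L^\infty$ estimates and any dimension restriction, so your argument is more elementary and in fact shows the conclusion holds for all $n\ge1$ and all $\mu>0$ (the paper only reaches general $n$ under the extra condition $\mu>\gamma(v_*)$ in Lemma \ref{pte3}); the trade-off is only cosmetic: your constant is $C=\gamma(0)$, while the paper's constant carries the Sobolev constant of $\Omega$ and $\|u_0\|_{L^1(\Omega)}$ — in both cases the dependence on $\gamma(0)=\gamma(v_*)$ is present, so the stated dependence of $C$ is matched in the same (loose) sense as in the paper.
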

\begin{proof}
	Since $2u\leq u^2+1$ and $v=(I-\Delta)^{-1}u,$ we deduce by the comparison principle that
	\begin{equation}\label{com}
	v=(I-\Delta)^{-1}[u]\leq (I-\Delta)^{-1}[u^2-u+1]=(I-\Delta)^{-1}[u^2-u]+1.
	\end{equation}
It follows from \eqref{var0} that
\begin{equation}\label{keymu}
	v_t+\mu v+\gamma(v)u\leq (I-\Delta)^{-1}[\gamma(v)u]+\mu
\end{equation}
which entails that
\begin{equation*}
	v(x,t)\leq v_0(x)+1+e^{-\mu t}\int_0^t(I-\Delta)^{-1}[\gamma(v)u] e^{\mu s}ds.
\end{equation*}
Finally, we observe that due to the three-dimensional Sobolev embedding theorem and \eqref{unil2b}, there holds
\begin{align*}
	e^{-\mu t}\int_0^t(I-\Delta)^{-1}[\gamma(v)u] e^{\mu s}ds\leq& e^{-\mu t}\int_0^te^{\mu s}\|(I-\Delta)^{-1}[\gamma(v)u]\|_{L^\infty(\Omega)}ds\non\\
	\leq& Ce^{-\mu t}\int_0^te^{\mu s}\|\gamma(v)u\|_{L^2(\Omega)}ds\non\\
	\leq&C\gamma(v_*)e^{-\mu t}\int_0^te^{\mu s}(\|u\|^2_{L^2(\Omega)}+1)ds\non\\
	\leq&\frac{C}{\mu}.
\end{align*}This completes the proof.
\end{proof}
Furthermore, if $\mu$ is enough large, then uniform-in-time upper bound for $v$ is available in any dimensions.
\begin{lemma}\label{pte3}
	Assume $n\ge1$. Then if $\mu>\gamma(v_*)$, there holds for any $(x,t)\in\Omega\times[0,T_{\mathrm{max}})$ that
	\begin{equation}\label{muest2}
		v(x,t)\leq v_0(x)+\frac{\mu}{\mu-\gamma(v_*)}.
	\end{equation}
\end{lemma}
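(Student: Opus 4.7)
The plan is to run essentially the same scheme as in the proof of Lemma \ref{pte2}, but replace the dimension-restricted Sobolev-embedding step by a pure comparison-principle estimate. The payoff is that when $\mu > \gamma(v_*)$ the resulting linear ODE for $v$ is strictly dissipative, so a one-line Gronwall argument delivers a uniform-in-time pointwise bound valid in \emph{every} dimension.

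Step one is to recover the inequality \eqref{keymu}, namely
$$v_t + \mu v + \gamma(v) u \leq (I-\Delta)^{-1}[\gamma(v) u] + \mu,$$
which is a direct consequence of the key identity \eqref{var0} together with $2u\leq u^2+1$ and the comparison principle for $I-\Delta$; note that this derivation is dimension-independent. Step two is to bound the right-hand side: since $v\geq v_*$ on $\Omega\times[0,T_{\mathrm{max}})$ (with $v_*>0$ given by Lemma \ref{lowerbound} when $\mu=0$ and $v_*=0$ under the standing convention when $\mu>0$) and $\gamma$ is non-increasing, we have $0\leq \gamma(v) u\leq \gamma(v_*) u$ pointwise, so the comparison principle for $I-\Delta$ yields
$$(I-\Delta)^{-1}[\gamma(v) u]\leq \gamma(v_*)\,(I-\Delta)^{-1}[u]=\gamma(v_*)\,v.$$
Dropping the non-negative $\gamma(v) u$ on the left and using this bound on the right converts \eqref{keymu} into the pointwise differential inequality
$$v_t(x,t) \leq (\gamma(v_*)-\mu)\, v(x,t)+\mu.$$

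Step three is to integrate. Since $\mu>\gamma(v_*)$, set $\alpha\triangleq \mu-\gamma(v_*)>0$, multiply the inequality by $e^{\alpha t}$, and integrate from $0$ to $t$ at each fixed $x$ to obtain
$$v(x,t)\leq v_0(x)\,e^{-\alpha t}+\frac{\mu}{\alpha}\bigl(1-e^{-\alpha t}\bigr)\leq v_0(x)+\frac{\mu}{\mu-\gamma(v_*)},$$
which is exactly \eqref{muest2}.

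I do not anticipate any real obstacle: all the delicate work is already contained in the identity \eqref{var0} and its refinement \eqref{keymu}. The only mild point worth noting is that the sign hypothesis $\mu>\gamma(v_*)$ is used in an essential way, because it is precisely what promotes the linear ODE for $v$ from a merely growing one (as in Lemma \ref{pte1}) to a dissipative one, thereby producing a time-independent ceiling without any appeal to Sobolev embedding and hence no restriction on the space dimension.
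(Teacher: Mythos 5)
Your proposal is correct and follows essentially the same route as the paper: derive \eqref{keymu} from the key identity \eqref{var0} via $(u-1)^2\geq 0$ and the comparison principle, bound $(I-\Delta)^{-1}[\gamma(v)u]\leq \gamma(v_*)v$ using monotonicity of $\gamma$, drop the non-negative term $\gamma(v)u$, and integrate the resulting dissipative linear differential inequality with the integrating factor $e^{t(\mu-\gamma(v_*))}$. The paper's proof is exactly this argument, so there is nothing to add.
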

\begin{proof}
Since $\gamma(v)\leq \gamma(v_*)$, we infer by the comparison principle that
\begin{equation*}
	0\leq(I-\Delta)^{-1}[\gamma(v)u]\leq \gamma(v_*)(I-\Delta)^{-1}[u]=\gamma(v_*)v.
\end{equation*}	As a result, we infer from \eqref{keymu} that
\begin{equation*}
	v_t+(\mu-\gamma(v_*)) v\leq \mu
\end{equation*}and thus for any fixed $x\in\Omega$
\begin{equation*}
	\frac{d}{d t}[e^{t(\mu-\gamma(v_*))}v(x,t)]\leq \mu e^{t(\mu-\gamma(v_*))}
\end{equation*}
which yields \eqref{muest2} by a direct integration with respect to time.
\end{proof}
\section{Global Existence with General Motilities}
In this section, we study system \eqref{chemo1} with general motility functions. First, thanks to Lemma \ref{pte1}, we establish global existence of classical solutions with general motility functions satisfying \eqref{gamma} and $\mu=0$ in the two-dimensional setting. Then, in view of uniform-in-time upper boundedness of $v$ given by Lemma \ref{pte2}, we study existence of classical solutions with uniform-in-time bounds when $\mu>0.$ Last, we prove uniform-in-time boundedness when $\gamma$ satisfies the extra growth condition (A2). 
\subsection{Global Existence when $\mu=0$ in 2D}
Since we have upper bound of $v$, we can argue now in a similar way as done for classical Keller-Segel models. First, we have
\begin{lemma}\label{glm1}
	 Assume $(u,v)$ is a classical solution of system \eqref{chemo1} on $\Omega\times (0,T)$. Then there exists $C(T)>0$ depending on the $u_0,\Omega$ and $T$ such that 
	\begin{equation*}
		\sup\limits_{0<t<T}\int_\Omega u(t)\log u(t) dx+\int_0^T\int_\Omega(1+\gamma(v))\frac{|\nabla u|^2}{u}dxds\leq C(T).
	\end{equation*}
\end{lemma}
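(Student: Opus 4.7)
The goal is to derive an $L\log L$ entropy estimate for $u$, adapting the classical 2D Keller--Segel approach. First test the first equation of \eqref{chemo1} with $\log u$; since $\mu=0$ implies $\int_\Omega u_t\,dx=0$, integration by parts yields the entropy identity
\begin{align*}
\frac{d}{dt}\io u\log u\,dx+\io \gamma(v)\,\frac{|\nabla u|^2}{u}\,dx=-\io \gamma'(v)\,\nabla u\cdot\nabla v\,dx.
\end{align*}
By Lemma \ref{lowerbound} and Lemma \ref{pte1}, $v$ satisfies $v_*\leq v(x,t)\leq V(T)$ on $\overline\Omega\times[0,T]$, so the $C^3$-regularity and positivity of $\gamma$ give constants $\gamma_{\min}(T)>0$ and $K(T)>0$ such that $\gamma(v)\geq\gamma_{\min}(T)$ and $|\gamma'(v)|^2/\gamma(v)\leq K(T)$ on $\overline\Omega\times[0,T]$. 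Young's inequality absorbs half of the dissipation:
\begin{align*}
\frac{d}{dt}\io u\log u\,dx+\frac{1}{2}\io\gamma(v)\,\frac{|\nabla u|^2}{u}\,dx\leq \frac{K(T)}{2}\io u|\nabla v|^2\,dx.
\end{align*}

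The crux is to control the right-hand side. Using H\"older and elliptic $L^r$-regularity for $-\Delta v+v=u$ (together with 2D Sobolev embeddings), one reduces to $\io u|\nabla v|^2\,dx\leq C\|u\|_{L^{p+1}}^{p+1}+C$ for some $p\in(1,2)$ close to $1$. Lemma~\ref{fs} then provides, for any $s>1$,
\begin{align*}
\|u\|_{L^{p+1}}^{p+1}\leq \frac{K_{Sob}(p+1)^2}{\log s}\Bigl(\io u\log u\,dx+e^{-1}|\Omega|\Bigr)\io u^{p-2}|\nabla u|^2\,dx+Cs^{p+1}+C,
\end{align*}
whose decisive feature is that $1/\log s$ can be made arbitrarily small by taking $s$ large, at the price of an additive $Cs^{p+1}$. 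A H\"older step using the conserved $L^1$-mass of $u$ bounds $\int_\Omega u^{p-2}|\nabla u|^2\,dx$ in terms of the dissipation $\int_\Omega|\nabla u|^2/u\,dx$, and hence, via $\gamma(v)\geq\gamma_{\min}(T)$, in terms of $\int_\Omega\gamma(v)|\nabla u|^2/u\,dx$.

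Set $y(t):=\io u\log u\,dx+e^{-1}|\Omega|\geq 0$ and $D(t):=\io \gamma(v)|\nabla u|^2/u\,dx$. Choosing $s$ as an increasing function of $y(t)$ so that the coefficient of $D(t)$ on the right is at most $\frac14$, one obtains a differential inequality $y'(t)+\tfrac14 D(t)\leq F(y(t),T)$ with $F$ of at most exponential growth in $y$. Gronwall's inequality yields $y(t)\leq C(T)$ on $[0,T]$, and integrating in time provides $\int_0^T D(t)\,dt\leq C(T)$. Since $1+\gamma(v)\leq \bigl(1+1/\gamma_{\min}(T)\bigr)\gamma(v)$, the claimed bound follows.

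The main difficulty is the interplay between Lemma \ref{fs}, which natively produces $\int u^{p-2}|\nabla u|^2$ for $p\in(1,2)$, and the natural dissipation $\int \gamma(v)|\nabla u|^2/u$, morally corresponding to $p=1$. Handling this mismatch---either by letting $p\downarrow 1^+$ together with a H\"older interpolation exploiting $\|u(\cdot,t)\|_{L^1}=\|u_0\|_{L^1}$, or by rederiving the inequality at the endpoint $p=1$---is the subtle step: it is precisely the adjustable small parameter $1/\log s$ that saves us, since the direct bound $\|u\|_{L^2}^2\lesssim \|u_0\|_{L^1}\int|\nabla u|^2/u+C$ would only allow absorption under a smallness restriction on $\|u_0\|_{L^1(\Omega)}$.
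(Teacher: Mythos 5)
Your opening is the same as the paper's: test with $\log u$, use Young's inequality, and use the bounds $v_*\le v\le v^*(T)$ from Lemmas \ref{lowerbound} and \ref{pte1} to control $\gamma$, $\gamma'$ from above and $\gamma$ from below on $[0,T]$. But the way you then close the estimate does not work, for two reasons. First, the absorption step is based on the claim that a H\"older argument with the conserved mass bounds $\io u^{p-2}|\nabla u|^2$ ($p>1$) by the entropy dissipation $\io |\nabla u|^2/u$. This inequality goes the wrong way: $u^{p-2}\ge u^{-1}$ wherever $u\ge 1$, and since $\io u^{p-2}|\nabla u|^2=\tfrac{4}{p^2}\io|\nabla u^{p/2}|^2$, controlling it from $\io|\nabla u^{1/2}|^2$ and $\|u\|_{L^1}$ alone is impossible (in 2D, $u^{1/2}\in H^1$ gives $\nabla u^{p/2}\in L^{2-\eps}$ but not $L^2$). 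The endpoint $p=1$ version of Lemma \ref{fs} is not stated in the paper and you do not prove it. Second, even granting the absorption, your choice of $s$ as an increasing function of $y(t)=\io u\log u+e^{-1}|\Omega|$ (needed to make the coefficient $\le\tfrac14$) turns the additive term $6s^{p+1}|\Omega|$ into something of order $e^{cy(t)}$, so the resulting differential inequality is $y'\le Ce^{cy}+C$, which is \emph{not} amenable to Gronwall and may blow up before time $T$; taking $s$ fixed instead would require an a priori bound on $\io u\log u$, which is circular. This is exactly why the paper invokes Lemma \ref{fs} only later, in the $L^p$ estimate of Lemma \ref{lemma_LP}, where the entropy bound is already available and $s$ can be chosen as a fixed large constant.

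The missing ingredient is Lemma \ref{est0}: the duality ($H^{-1}$) estimate gives, unconditionally for $\mu=0$, the space--time bound $\int_0^T\io\gamma(v)u^2\,dx\,dt\le C(1+T)$. The paper splits the bad term by Young as
\begin{equation*}
\io\frac{|\gamma'(v)|^2}{\gamma(v)}\,u|\nabla v|^2\,dx\le \io\gamma(v)u^2\,dx+\io\frac{|\gamma'(v)|^4}{\gamma(v)^3}|\nabla v|^4\,dx,
\end{equation*}
bounds the second term by $C(T)\bigl(\|v\|_{H^2(\Omega)}^2+1\bigr)$ using $\|\nabla v\|_{L^4}\le C\|v\|_{H^2}^{1/2}\|v\|_{L^\infty}^{1/2}+C\|v\|_{L^\infty}$ and the pointwise bounds on $v$, and then controls $\int_0^T\|v\|_{H^2}^2\,dt\le C\int_0^T\|u\|_{L^2}^2\,dt\le C(T)$ because $\gamma(v)\ge\gamma(v^*(T))>0$ on $[0,T]$. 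Both contributions are thus integrable in time with a $T$-dependent bound, and a direct time integration of the entropy identity finishes the proof — no absorption via Lemma \ref{fs}, and no smallness of $\|u_0\|_{L^1(\Omega)}$, is needed at this stage. You should restructure your argument along these lines (or supply a correct endpoint substitute for Lemma \ref{fs} together with a closable differential inequality, which your present write-up does not do).
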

\begin{proof}
	Multiplying the first equation of \eqref{chemo1} by $\log u$ and integrating over $\Omega$, we obtain that
\begin{align}
	\frac{d}{dt}\int_\Omega u\log udx+\int_\Omega \gamma(v)\frac{|\nabla u|^2}{u}dx=&-\int_\Omega \gamma'(v)\nabla v\cdot \nabla udx\non\\
	\leq&\frac12\int_\Omega \gamma(v)\frac{|\nabla u|^2}{u}dx+\int_\Omega 
	\frac{|\gamma'(v)|^2}{\gamma(v)}u|\nabla v|^2dx\non\\
	\leq &\frac12\int_\Omega \gamma(v)\frac{|\nabla u|^2}{u}dx+\int_\Omega \gamma(v)u^2dx+\int_\Omega 
	\frac{|\gamma'(v)|^4}{\gamma(v)^3}|\nabla v|^4dx.\non
\end{align}	
	Notice that for $n\leq3$, the Sobolev embedding indicates that
	\begin{equation}\nn
		\|\nabla v\|_{L^4(\Omega)}\leq C\|v\|_{H^2(\Omega)}^{1/2}\|v\|^{1/2}_{L^\infty(\Omega)}+C\|v\|_{L^\infty(\Omega)}.
	\end{equation}Therefore, in view of Lemma \ref{lowerbound}, Lemma \ref{pte1} and our assumption \eqref{gamma} on $\gamma,$ there is $C(T)$ depending on $v_*$ and $\gamma$
 such that\begin{align}
		\int_\Omega\frac{|\gamma'(v)|^4}{\gamma(v)^3}|\nabla v|^4dx\leq& C(T)\int_\Omega|\nabla v|^4dx\non\\
		\leq&C(T)\|v\|^2_{H^2(\Omega)}+C(T).\nn
\end{align}	On the other hand, since $\gamma(v)$ is now bounded from below, we observe from the elliptic regularity theorem and Lemma \ref{est0} that
\begin{equation}
	\int_0^T\|v\|^2_{H^2(\Omega)}dt\leq C\int_0^T\|u\|_{L^2(\Omega)}^2dt\leq C(T).\nn
\end{equation}
	Finally, we deduce that
	\begin{equation}
	\int_\Omega u\log udx+\int_0^T\int_\Omega (1+\gamma(v))\frac{|\nabla u|^2}{u}dxdt	\leq C(T)\nn
	\end{equation}
	which completes the proof.
\end{proof}

\begin{lemma}\label{lemma_LP}
	Assume that  $(u,v)$ is a classical solution of system \eqref{chemo1} on $\Omega \times (0,T)$. 
	Then there exist  $p \in (1,2)$ and some $C(T) >0$ such that
	\begin{eqnarray*}
	\|u(t)\|_{L^p(\Omega)} \leq C(T)\qquad
	\mbox{for all }t\in(0,T).
	\end{eqnarray*}
\end{lemma}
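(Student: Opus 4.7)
The plan is to multiply the $u$-equation by $u^{p-1}$, bound the resulting cross term $\int_\Omega u^p|\nabla v|^2$ by $\|u\|_{L^{p+1}}^{p+1}$ via elliptic regularity on the $v$-equation, and then absorb $\int_\Omega u^{p+1}$ into the gradient dissipation using the Fujie--Senba-type inequality (Lemma 2.3) combined with the $\int_\Omega u\log u$ bound already established in Lemma 4.1. I would fix $p \in (1,2)$ from the outset; this range is dictated by both the applicability of Lemma 2.3 and the 2D Sobolev exponent constraints appearing in the elliptic step.

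First I would test $u_t = \Delta(\gamma(v)u)$ against $u^{p-1}$, integrate by parts twice, and use Young's inequality on the resulting mixed term to obtain
\[
\frac{1}{p}\frac{d}{dt}\int_\Omega u^p\,dx + \frac{p-1}{2}\int_\Omega \gamma(v)u^{p-2}|\nabla u|^2\,dx \leq \frac{p-1}{2}\int_\Omega \frac{|\gamma'(v)|^2}{\gamma(v)}u^p|\nabla v|^2\,dx.
\]
Lemma 3.4 (with $\mu=0$, $n=2$) supplies $v(x,t) \leq V(T)$ on $\Omega \times [0,T]$, and together with $v \geq v_*>0$ from Lemma 2.2 and $\gamma \in C^3$, it gives uniform positive upper and lower bounds on $\gamma(v)$ and an upper bound on $|\gamma'(v)|^2/\gamma(v)$, all depending on $T$. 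This reduces the energy identity to
\[
\frac{d}{dt}\int_\Omega u^p\,dx + c_1(T)\int_\Omega |\nabla u^{p/2}|^2\,dx \leq c_2(T)\int_\Omega u^p|\nabla v|^2\,dx.
\]

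For the cross term I would apply Hölder with conjugate exponents $(p+1)/p$ and $p+1$, reducing it to $\|u\|_{L^{p+1}}^p\|\nabla v\|_{L^{2(p+1)}}^2$. Setting $q:=2(p+1)/(p+2)\in(4/3,3/2)\subset(1,2)$, elliptic regularity on $-\Delta v + v = u$ with Neumann BC gives $\|v\|_{W^{2,q}}\leq C\|u\|_{L^q}$, and the 2D Sobolev embedding $W^{2,q}\hookrightarrow W^{1,2(p+1)}$ (valid precisely because $2q/(2-q)=2(p+1)$) together with the interpolation $\|u\|_{L^q}\leq\|u\|_{L^{p+1}}^{1/2}\|u\|_{L^1}^{1/2}$ and the mass conservation $\|u(t)\|_{L^1}=\|u_0\|_{L^1}$ yield $\int_\Omega u^p|\nabla v|^2\,dx \leq C(T)\|u\|_{L^{p+1}}^{p+1}$. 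Lemma 2.3 with the uniform bound $\int_\Omega u\log u \leq M(T)$ from Lemma 4.1 then lets me bound $\int_\Omega u^{p+1}$ by $\eta\int_\Omega u^{p-2}|\nabla u|^2 + C(\eta,T)$ for arbitrarily small $\eta>0$ (chosen by taking $s$ large), which is absorbed into the $c_1(T)$ dissipation on the left.

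Finally, to close the differential inequality, I would apply the 2D Gagliardo--Nirenberg inequality to $u^{p/2}$ using $\|u^{p/2}\|_{L^{2/p}}^{2/p}=\|u\|_{L^1}$ as the low-norm reference; the resulting power $2(p-1)/p<2$ on $\|\nabla u^{p/2}\|_{L^2}$ lets Young's inequality give $\int_\Omega u^p \leq \varepsilon\int_\Omega |\nabla u^{p/2}|^2\,dx + C(\varepsilon)$, turning the estimate into $y'(t)+cy(t)\leq C$ with $y=\int_\Omega u^p$, and standard Gronwall delivers the desired bound on $[0,T]$. The main obstacle is the delicate interplay in the third step: the FS coefficient must be driven strictly below the dissipation constant by a single choice of $s$, and simultaneously the cross-term bound must land exactly on $\|u\|_{L^{p+1}}^{p+1}$ so that Lemma 2.3 is applicable; these two compatibility requirements are what force the restriction $p \in (1,2)$.
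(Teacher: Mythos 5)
Your proposal is correct and follows essentially the same route as the paper: testing with $u^{p-1}$, bounding $|\gamma'(v)|^2/\gamma(v)$ via the pointwise bounds $v_*\leq v\leq v^*(T)$, reducing the cross term to $C(T)\int_\Omega u^{p+1}$ through H\"older, elliptic regularity $\|\nabla v\|_{L^{2(p+1)}}\leq C\|u\|_{L^{2(p+1)/(p+2)}}$ and interpolation with the conserved mass, and then absorbing $\int_\Omega u^{p+1}$ by choosing $s$ large in Lemma \ref{fs} together with the entropy bound of Lemma \ref{glm1}. Your only deviations are cosmetic: you combine the H\"older and elliptic steps without the paper's intermediate Young splitting, and you close with a Gagliardo--Nirenberg/Gronwall argument where the paper simply integrates $\frac{d}{dt}\int_\Omega u^p\leq C(T)$ in time; both are valid.
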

\begin{proof}
	Multiplying the first equation of \eqref{chemo1} by $ u^{p-1}$ we have
	\begin{eqnarray*}
		\frac{1}{p}\frac{d}{dt} \io u^p \,dx 
		&=& \io u^{p-1} u_t \,dx\\
		&=& \io u^{p-1} \nabla \cdot (\gamma(v) \nabla u + u \gamma'(v) \nabla v)\,dx,
	\end{eqnarray*}
	and by integration by parts, it follows that
	\begin{eqnarray*}
		\frac{1}{p}\frac{d}{dt} \io u^p \,dx +(p-1) \io u^{p-2}\gamma(v)|\nabla u|^2 \,dx
		=
		-(p-1) \io u^{p-1}\gamma'(v) \nabla u \cdot \nabla v \,dx.
	\end{eqnarray*}
	By the Cauchy-Schwarz inequality we have
	\begin{eqnarray*}
		\frac{1}{p}\frac{d}{dt} \io u^p \,dx +\frac{p-1}{2} \io u^{p-2}\gamma(v)|\nabla u|^2 \,dx
		&\leq&
		\frac{p-1}{2} \io \frac{u^{p}|\gamma'(v)|^2}{\gamma(v)} |\nabla v|^2 \,dx \\
		&\leq &
		pM_{\gamma}(T) \io u^{p}|\nabla v|^2 \,dx,
	\end{eqnarray*}
	where we set
\begin{equation*}
	M_{\gamma}(T)=\sup\limits_{s\in[v_*, v^*(T)]}\frac{|\gamma'(s)|^2}{\gamma(s)} 
\end{equation*}
with $v^*(T)\triangleq e^{\gamma(v_*)T}\|v_0\|_{L^\infty}$.
	Using H\"older's inequality and Young's inequality we obtain that
	\begin{align*}
	\io u^p {|\nabla v|}^2 \,dx
	&\leq
	{\bigg(\io u^{p+1}\,dx \bigg)}^{\frac{p}{p+1}}
	{\bigg(\io {|\nabla v|}^{2(p+1)}\,dx \bigg)}^{\frac{1}{p+1}}\\
	&\leq
	\dfrac{p}{p+1}\io u^{p+1} v \,dx
	+
	\dfrac{1}{p+1}\io {|\nabla v|}^{2(p+1)} \,dx,
	\end{align*}
	and in view of Lemma \ref{ptest}, we obtain
	\begin{align*}
	\dfrac{1}{p}\dfrac{d}{dt}\io u^p \,dx 
	+C\io u^{p-2}{|\nabla u|}^2 \,dx
	\leq
	C \io u^{p+1} \,dx
	+ C \io {|\nabla v|}^{2(p+1)}\,dx,
	\end{align*}
	with some $C=C(T)>0$.

On the other hand, by the Sobolev embedding theorem and the elliptic regularity theory, we deduce that
	\begin{align*}
	\|\nabla v\|_{L^{2(p+1)}(\Omega)}
	\leq
	C \|v\|_{W^{2,\frac{2(p+1)}{p+2}}(\Omega)}
	\leq
	C \|(-\Delta+1) v\|_{L^{\frac{2(p+1)}{p+2}}(\Omega)}=C\|u\|_{L^{\frac{2(p+1)}{p+2}}(\Omega)}
	\end{align*}
	with positive constants $C$. By the interpolation inequality, there holds 
	\begin{align*}
	{\|u\|}^{2(p+1)}_{L^{\frac{2(p+1)}{p+2}}(\Omega)}
	\leq
	\|u\|^{p+1}_{L^1(\Omega)}\io u^{p+1}.
	\end{align*}
	Therefore we have 
	\begin{align}\label{Lpineq}
		\dfrac{d}{dt}\io u^p dx
		+C\io u^{p-2}{|\nabla u|}^2 dx
		\leq
		C \io u^{p+1} dx.
	\end{align}
	Finally picking $s>0$ sufficiently large in Lemma \ref{fs} and recalling Lemma \ref{glm1}, we obtain that
	\begin{align*}
			\dfrac{d}{dt}\io u^p \,dx 
		+C\io u^{p-2}{|\nabla u|}^2 \,dx
		\leq
		C (T).
	\end{align*}
	Thus we complete the proof by a direct integration with respect to time.
\end{proof}
After the above preparation, we may use standard bootstrap argument to prove that 
\begin{equation*}
	\sup\limits_{0<t<T}\|u(\cdot, t)\|_{L^\infty(\Omega)}\leq C(T)
\end{equation*}for any $T<T_{\mathrm{max}}$
and hence by Theorem \ref{local}, we deduce that $T_{\mathrm{max}}=+\infty$. Therefore, we have
\begin{proposition}\label{Prop4.1}
	Let $\Omega\subset\mathbb{R}^2$ and $\mu=0$. Assume \eqref{ini} and \eqref{gamma} are satisfied. Then there exists a unique global classical solution $(u,v)$ to system \eqref{chemo1}.
\end{proposition}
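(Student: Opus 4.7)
The plan is to argue by contradiction: assume $T_{\mathrm{max}} < \infty$ and derive an $L^\infty$ bound for $u$ on $[0,T_{\mathrm{max}})$, which contradicts the blowup criterion of Theorem \ref{local}. All the essential ingredients are already in place; what remains is to chain them together with a standard Moser-type iteration.

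First, I would fix any $T \in (0, T_{\mathrm{max}})$. By Lemma \ref{pte1} together with Lemma \ref{lowerbound}, one has $v_* \leq v(x,t) \leq v_0(x)\, e^{\gamma(v_*)T} =: v^*(T)$ on $\Omega \times [0,T]$. Hence $\gamma(v)$ is bounded between two strictly positive constants on $[0,T]$, which is the crucial point: the diffusion is uniformly non-degenerate on every finite time interval, so classical parabolic techniques become available. In particular, Lemma \ref{glm1} provides an entropy estimate, and Lemma \ref{lemma_LP} upgrades this to an $L^p$ bound for some $p \in (1,2)$.

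Next, I would run a standard bootstrap to pass from $L^p$ to $L^\infty$. Because $p > 1 = n/2$ in the two-dimensional setting, the elliptic regularity theory applied to $-\Delta v + v = u$ together with the Sobolev embedding yields $\|\nabla v\|_{L^q(\Omega)} \leq C(T)$ for some $q > 2$. Testing the first equation of \eqref{chemo1} against $u^{r-1}$ for arbitrary $r \geq 2$ and writing
\begin{equation*}
\frac{1}{r}\frac{d}{dt}\io u^r\,dx + (r-1)\io \gamma(v)\, u^{r-2}|\nabla u|^2\,dx = -(r-1)\io u^{r-1}\gamma'(v)\,\nabla u \cdot \nabla v\,dx,
\end{equation*}
the lower bound $\gamma(v)\geq \gamma(v^*(T))$ allows the cross term to be absorbed via Young's inequality at the cost of $\int u^r |\nabla v|^2 \,dx$, which by H\"older and the $L^q$ bound on $\nabla v$ is in turn controlled by combinations of $\int u^{r+\sigma}$ for small $\sigma$. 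Iterating as in Alikakos \cite{Ali} (or any similar Moser argument) produces $\|u(\cdot,t)\|_{L^\infty(\Omega)} \leq C(T)$ on $[0,T]$.

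Combined with the blowup criterion in Theorem \ref{local}, this forces $T_{\mathrm{max}} = +\infty$, proving global existence. Uniqueness and the continuity/regularity class stated follow from Theorem \ref{local}. The only subtle point is verifying that the constants in the iteration depend on $T$ through $v^*(T)$ but remain finite on any bounded time interval; this is guaranteed by the exponential-in-$T$ bound of Lemma \ref{pte1}. The essence of the proof is thus already contained in Lemmas \ref{pte1}, \ref{glm1} and \ref{lemma_LP}, and the main obstacle, namely ruling out finite-time degeneracy of the motility, has been overcome by the key identity \eqref{keyid}.
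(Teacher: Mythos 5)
Your proposal is correct and follows essentially the same route as the paper: the non-degeneracy on finite time intervals from Lemma \ref{pte1} (with the lower bound of Lemma \ref{lowerbound}), the entropy estimate of Lemma \ref{glm1}, the $L^p$ bound of Lemma \ref{lemma_LP}, and then a standard Alikakos/Moser-type bootstrap to $L^\infty$ combined with the blowup criterion of Theorem \ref{local}. The paper leaves the final bootstrap as ``standard,'' so your sketch of it (elliptic regularity giving $\nabla v\in L^q$ with $q>2$, then testing with $u^{r-1}$) is just a fleshed-out version of the same argument.
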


\subsection{Uniform-in-time Boundedness when $\mu>0$ in 2D}
In this part, we consider the case $\mu>0$. First, we have
\begin{lemma}
	Assume $n\leq3$ and  $\mu>0$. Suppose $(u,v)$ is a classical solution on $\Omega\times(0,T_{\mathrm{max}})$. There exists $C>0$ which is independent of $T_{\mathrm{max}}$ such that
	\begin{equation}\label{unib1}
	\sup\limits_{0<t<T_{\mathrm{max}}}\int_\Omega (u\log u+|\nabla v|^2+v^2) dx\leq C
	\end{equation}and for any $t\in(0,T_{\mathrm{max}}-\tau)$ with any fixed $0<\tau<\min\{1,T_{\mathrm{max}}/2\}$,
	\begin{equation}\label{unib2}
		\int_t^{t+\tau}\int_\Omega(|\Delta v|^2+|\nabla v|^4) dxds\leq C+C/\mu.
	\end{equation}
\end{lemma}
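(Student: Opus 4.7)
The plan is to combine the uniform $L^\infty$-bound on $v$ provided by Lemma \ref{pte2}, the $L^1$ and localized $L^2$ bounds on $u$ from Lemma \ref{mu1}, a standard entropy identity for $\int u\log u$, and the uniform Gronwall inequality (Lemma \ref{uniformGronwall}) to close all the estimates in a form independent of $T_{\mathrm{max}}$.

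I would first dispatch the pieces that follow directly from the elliptic equation. Testing $-\Delta v+v=u$ against $v$ and using the $L^\infty$-bound on $v$ together with $\|u\|_{L^1}\le C$ gives $\|v\|_{H^1}^2=\int_\Omega uv\le C$, accounting for the $|\nabla v|^2+v^2$ part of \eqref{unib1}. Since $\Delta v = v-u$, one has $\int|\Delta v|^2\le 2\int v^2+2\int u^2$, and integrating over $(t,t+\tau)$ via \eqref{unib4} delivers the $|\Delta v|^2$ half of \eqref{unib2}. For the quartic gradient term I would integrate by parts: starting from $\int|\nabla v|^4=\int\nabla v\cdot(|\nabla v|^2\nabla v)$ and moving one derivative off yields
\begin{equation*}
\int_\Omega|\nabla v|^4\,dx = -\int_\Omega v\,|\nabla v|^2\Delta v\,dx - \int_\Omega v\,\nabla(|\nabla v|^2)\cdot\nabla v\,dx
\end{equation*}
(the boundary term vanishes by Neumann). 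Cauchy--Schwarz combined with Young's inequality absorbs $\|\nabla v\|_{L^4}^2$ on the left and leaves $\int|\nabla v|^4\le C\|v\|_\infty^2\|D^2v\|_{L^2}^2$. Standard elliptic regularity for $-\Delta+1$ on Neumann gives $\|D^2v\|_{L^2}^2\le C\|u\|_{L^2}^2$, so integrating in time and invoking \eqref{unib4} finishes \eqref{unib2}.

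The bound on $\int u\log u$ is more delicate. Testing the first equation of \eqref{chemo1} against $\log u$, integrating by parts and applying Young's inequality to the cross term $\int\gamma'(v)\nabla u\cdot\nabla v$ against the natural dissipation $\int\gamma(v)|\nabla u|^2/u$ gives
\begin{equation*}
\frac{d}{dt}\int_\Omega u\log u + \frac12\int_\Omega\gamma(v)\frac{|\nabla u|^2}{u} \le \frac{M}{2}\int_\Omega u|\nabla v|^2 + \mu\int_\Omega u(1-u) + \mu\int_\Omega u(1-u)\log u,
\end{equation*}
where $M = \sup_{s\in[v_*,V]}|\gamma'(s)|^2/\gamma(s) < \infty$, with $V$ the uniform upper bound on $v$ from Lemma \ref{pte2} and $\gamma\in C^3$. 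A direct sign check shows $u(1-u)\log u\le 0$ for every $u\ge 0$, while $\mu\int u(1-u)\le\mu\|u\|_{L^1}\le C$. Splitting $u|\nabla v|^2\le \tfrac12 u^2+\tfrac12|\nabla v|^4$ turns the remaining right-hand side into a function whose integral over $(t,t+\tau)$ is bounded by $C+C/\mu$ thanks to the two time-integral bounds just produced. Setting $y(t) = \int u\log u + e^{-1}|\Omega|\ge 0$ and using $u\log u\le u^2/e + e^{-1}$ to estimate $\int_t^{t+\tau} y\le C+C/\mu$, Lemma \ref{uniformGronwall} (with $g\equiv 0$) yields $y(t+\tau)\le C$ uniformly in $t$, while continuity on the initial slab $[0,\tau]$ handles the remaining early times.

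The main obstacle I foresee is ensuring every bound is genuinely independent of $T_{\mathrm{max}}$. A naive integration of the entropy inequality would produce growth factors like $\exp(\int u|\nabla v|^2)$, which we cannot control; the saving grace is that $\int u|\nabla v|^2$ is only integrable on short windows rather than absolutely bounded, which is precisely the scenario that Lemma \ref{uniformGronwall} is designed for. A parallel subtlety arises for the $|\nabla v|^4$ estimate: a brute-force Sobolev embedding $\|\nabla v\|_{L^4}^4\le C\|v\|_{H^2}^4\le C\|u\|_{L^2}^4$ would not integrate against the available $\int_t^{t+\tau}\|u\|_{L^2}^2\le C+C/\mu$, which is why the integration-by-parts identity producing the sharper $\|v\|_\infty^2\|D^2v\|_{L^2}^2$ bound must be used instead.
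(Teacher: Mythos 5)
Your proposal is correct, and its skeleton (window-in-time integrability plus the uniform Gronwall lemma for the entropy) matches the paper's, but several sub-steps are genuinely different. (a) For the $H^1$-bound on $v$ you simply write $\|v\|_{H^1}^2=\int_\Omega uv\le\|u\|_{L^1(\Omega)}\|v\|_{L^\infty(\Omega)}\le C$ using Lemma \ref{pte2} and \eqref{unib5}; the paper instead derives, from the key identity \eqref{var0} together with \eqref{com} and a Sobolev/Cauchy--Schwarz absorption, the dissipative inequality $\frac{d}{dt}\int_\Omega(|\nabla v|^2+v^2)\,dx+2\mu\int_\Omega(|\nabla v|^2+v^2)\,dx\le C(\Lambda^2+\Lambda)$ and solves it. Your shortcut is legitimate (Lemma \ref{pte2} is established before this lemma, so there is no circularity, and the paper itself invokes Lemma \ref{pte2} later in the same proof for the $|\nabla v|^4$ term), while the paper's ODE route is what exhibits the exponential damping in $\mu$. (b) In the entropy estimate you absorb the cross term $-\int_\Omega\gamma'(v)\nabla u\cdot\nabla v\,dx$ by Young's inequality against the dissipation, needing only $\sup_{s\in[0,V]}|\gamma'(s)|^2/\gamma(s)<\infty$ on the bounded range of $v$, whereas the paper integrates by parts once more to $\int_\Omega\gamma''(v)u|\nabla v|^2dx+\int_\Omega\gamma'(v)u\Delta v\,dx$ and uses bounds on $\gamma',\gamma''$; both reduce the right-hand side to the same window-integrable majorant involving $\int u^2$, $\int|\Delta v|^2$, $\int|\nabla v|^4$. (c) You rederive the interpolation $\|\nabla v\|_{L^4}^4\le C\|v\|_{L^\infty}^2\|v\|_{H^2}^2$ by hand via integration by parts, while the paper quotes the corresponding Sobolev/Gagliardo--Nirenberg inequality; and your pointwise observation $u(1-u)\log u\le0$ lets you run the uniform Gronwall lemma with $g\equiv0$, whereas the paper keeps a $\mu\int u\log u$ term on the right. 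The only cosmetic loose end is the initial slab $[0,\tau]$: rather than appealing to continuity, it is cleaner to integrate $y'\le h$ from $t=0$, which bounds $\sup_{0<t\le\tau}y$ by $\int_\Omega u_0\log u_0\,dx+C$ and keeps the constant explicitly in terms of the data.
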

\begin{proof}
	Using the second equation of \eqref{chemo1} and \eqref{var0}, we notice that
	\begin{align}
	\frac12\frac{d}{dt}\int_\Omega(|\nabla v|^2+v^2)dx=&\int_\Omega uv_t dx\non\\
	=&\int_\Omega u(I-\Delta)^{-1}[u\gamma(v)]-\int_\Omega u^2\gamma(v)dx-\mu\int_\Omega u(I-\Delta)^{-1}[u^2-u]dx.\non
	\end{align}
Then, it follows from \eqref{com} that
	\begin{equation}\non
	\frac12\frac{d}{dt}\int_\Omega(|\nabla v|^2+v^2)dx+\int_\Omega u^2\gamma(v)dx+\mu\int_\Omega uv\leq\int_\Omega u(I-\Delta)^{-1}[u\gamma(v)]dx+\mu\int_\Omega udx.
	\end{equation}	
	If $n\leq3, $ we apply the Sobolev embedding theorem to deduce that
	\begin{align*}\non
	\int_\Omega u(I-\Delta)^{-1}[u\gamma(v)]dx\leq&\|(I-\Delta)^{-1}[u\gamma(v)]\|_{L^\infty(\Omega)}\int_\Omega udx\non\\
	\leq&	C\|u\gamma(v)\|_{L^2(\Omega)}\int_\Omega udx.
	\end{align*}
	In view of the Cauchy-Schwarz inequality, for any $\varepsilon>0$ it follows that
		\begin{align}\non
	\int_\Omega u(I-\Delta)^{-1}[u\gamma(v)]dx
	\leq\varepsilon\gamma(v_*)\int_\Omega u^2\gamma(v)dx+\frac{C}{\varepsilon}\left(\int_\Omega udx\right)^2.\non
	\end{align}
Since the second equation of \eqref{chemo1} implies
\begin{equation}\non
	\int_\Omega uv dx=\int_\Omega (|\nabla v|^2+ v^2)dx,
\end{equation}	
by choosing small $\varepsilon>0$ we deduce from above inequalities that	
\begin{align}\label{unib3}
\non
	\frac{d}{dt}\int_\Omega(|\nabla v|^2+v^2)dx+2\mu\int_\Omega (|\nabla v|^2+ v^2)dx
	\leq& C\left(\int_\Omega udx\right)^2 +2\mu\int_\Omega udx\\
	\leq& C(\Lambda^2+\Lambda).
\end{align}
Hence,  by solving \eqref{unib3} we obtain that
\begin{equation}\label{unib1a}
\sup\limits_{0<t<T_{\mathrm{max}}}\int_\Omega (|\nabla v|^2+v^2) dx\leq C.
\end{equation} On the other hand, the elliptic regularity theorem together with \eqref{unib4} yields that
\begin{equation}\nonumber
\int_t^{t+\tau}\|v\|^2_{H^2(\Omega)}ds\leq C+C/\mu.
\end{equation}
Thus \eqref{unib2} follows from Lemma \ref{pte2}, \eqref{unib1a} and the Sobolev embedding
\begin{equation*}
\|\nabla v\|^4_{L^4(\Omega)}\leq C\|v\|_{H^2(\Omega)}^2\|v\|_{L^\infty(\Omega)}^2+C\|\nabla v\|_{L^2(\Omega)}^4.
\end{equation*}
Next, multiplying the first equation of \eqref{chemo1} by $\log u$, we see that
\begin{align*}
	&\frac{d}{dt}\left(\int_\Omega u\log u dx-\int_\Omega udx\right)+\int_\Omega \frac{\gamma(v)|\nabla u|^2}{u}dx\\
	=&-\int_\Omega \gamma'(v)\nabla v\cdot \nabla u+\mu\int_\Omega (u\log u-u^2\log u)dx.
\end{align*}
Since the integration by parts implies
\begin{align*}
-\int_\Omega \gamma'(v)\nabla v\cdot \nabla u
=\int_\Omega \gamma''(v)u|\nabla v|^2dx+\int_\Omega \gamma'(v)u\Delta vdx,
\end{align*}
then by adding $\int_\Omega u^2 dx$ both sides and using the Young's inequality, we have
\begin{align*}
	\frac{d}{dt}&\left(\int_\Omega u\log u dx-\int_\Omega udx\right)+\int_\Omega \frac{\gamma(v)|\nabla u|^2}{u}dx+\int_\Omega u^2 dx\non\\
	=&\int_\Omega u^2 dx+\int_\Omega \gamma''(v)u|\nabla v|^2dx+\int_\Omega \gamma'(v)u\Delta vdx	+\mu\int_\Omega (u\log u-u^2\log u)dx\non\\
	\leq&C_\gamma(\mu)\int_\Omega u^2dx+C\int_\Omega (|\Delta v|^2+|\nabla v|^4) dx+\mu\int_\Omega u\log u+\mu |\Omega|,
\end{align*}where $C_\gamma(\mu)$ depends on $K_1(\mu,\gamma)\triangleq\max\limits_{v_*\leq s\leq v^*}\{\gamma''(s),\gamma'(s)\}$ with $v^*$ denoting the uniform-in-time $L^\infty$-bound obtained in Lemma \ref{pte2}. As $\xi\log\xi-\xi\leq \xi^2$ for all $\xi>0$ and hence
\begin{equation}
	\non \int_\Omega (u\log u-u)dx\leq \int_\Omega u^2dx,
\end{equation} thanks to \eqref{unib2}, \eqref{unib5} and \eqref{unib4}, we infer by means of ODE analysis that
\begin{equation}\non
	\int_\Omega u\log udx\leq C.
\end{equation}This completes the proof.
\end{proof}
Once we establish \eqref{unib1}, we can argue as before to obtain uniform-in-time boundedness of $\|u\|_{L^\infty(\Omega)}$ in the 2D case \cite{BBTW15}. In addition, one can argue in the same way as in \cite{JKW18} to obtain the stability of the classical solutions. Thus, we have
\begin{proposition}\label{Prop4.2}
	Assume $n=2$ and $\mu>0$. For any $u_0$ and $\gamma(\cdot)$ satisfying \eqref{ini} and \eqref{gamma} respectively, system \eqref{chemo1} has a unique classical solution $(u,v)$ that is uniform-in-time bounded.
	
	 Moreover, if
	\begin{equation*}
		K_0\triangleq\max\limits_{0\leq s\leq +\infty}\frac{|\gamma'(s)|^2}{\gamma(s)}<+\infty
	\end{equation*}
and $\mu>\frac{K_0}{16}$,	there holds
	\begin{equation*}
		\lim\limits_{t\rightarrow+\infty}\left(\|u(\cdot,t)-1\|_{L^\infty(\Omega)}+\|v(\cdot,t)-1\|_{L^\infty(\Omega)}\right)=0.
	\end{equation*}
\end{proposition}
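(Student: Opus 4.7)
The plan is to exploit Lemma \ref{pte2}, which provides a \emph{time-independent} pointwise upper bound $v(x,t) \leq v^{\ast}$, so that $\gamma(v) \geq \gamma(v^{\ast}) > 0$ uniformly in $t$. Combined with the uniform-in-time entropy bound \eqref{unib1} and the uniform local-in-time estimate \eqref{unib2}, I would rerun the $L^{p}$ energy estimate from Lemma \ref{lemma_LP}, now with all constants independent of $T_{\mathrm{max}}$. The crucial step is invoking Lemma \ref{fs} with $s$ chosen large in terms of the (now uniform) $u \log u$ bound, to absorb $\int_{\Omega} u^{p+1}\,dx$ into the dissipation and close a Gronwall argument, yielding $\sup_{t>0}\|u(t)\|_{L^{p}(\Omega)} < \infty$ for some $p>1$. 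A standard semigroup or Moser bootstrap in two dimensions then upgrades this to $\sup_{t>0}\|u(t)\|_{L^{\infty}(\Omega)} < \infty$, and control of $\|v\|_{L^{\infty}}$ follows from elliptic regularity applied to $-\Delta v + v = u$.

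\textbf{Part 2 (Convergence under $\mu > K_{0}/16$).} I would work with the relative entropy $F(t) = \int_{\Omega}(u - 1 - \log u)\,dx \geq 0$. Testing the first equation of \eqref{chemo1} with $1 - 1/u$ (legitimate since $u > 0$ by the strong maximum principle, guaranteed by the $L^{\infty}$ bound of Part 1) and integrating by parts yields the dissipation identity
\begin{equation*}
\frac{d}{dt}F(t) + \int_{\Omega}\frac{\gamma(v)|\nabla u|^{2}}{u^{2}}\,dx + \mu \int_{\Omega}(u-1)^{2}\,dx = -\int_{\Omega}\frac{\gamma'(v)}{u}\,\nabla u \cdot \nabla v\,dx.
\end{equation*}
By Young's inequality the right-hand side is bounded by $\int_{\Omega}\gamma(v)|\nabla u|^{2}/u^{2}\,dx + (K_{0}/4)\int_{\Omega}|\nabla v|^{2}\,dx$, which absorbs the first dissipation term on the left. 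Next, the elementary spectral estimate
\begin{equation*}
\int_{\Omega}|\nabla v|^{2}\,dx \leq \frac{1}{4}\int_{\Omega}(u-1)^{2}\,dx,
\end{equation*}
obtained via the Neumann eigenfunction expansion of $(I-\Delta)^{-1}$ together with $\max_{\lambda \geq 0}\lambda/(1+\lambda)^{2} = 1/4$, converts the right-hand side into $(K_{0}/16)\int_{\Omega}(u-1)^{2}\,dx$. Therefore
\begin{equation*}
\frac{d}{dt}F(t) + \Bigl(\mu - \tfrac{K_{0}}{16}\Bigr)\int_{\Omega}(u-1)^{2}\,dx \leq 0,
\end{equation*}
and since $F \geq 0$, we conclude $\int_{0}^{\infty}\|u(\cdot,t)-1\|_{L^{2}(\Omega)}^{2}\,dt < \infty$.

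\textbf{Part 3 (Upgrade to $L^\infty$ convergence and main obstacle).} To pass from $L^{2}$-integrability of $u-1$ to uniform convergence, I would combine the uniform $L^{\infty}$ bounds from Part 1 with interior Hölder regularity for parabolic equations: the effective diffusivity $\gamma(v)$ is bounded above and below by positive constants, so the equation for $u$ is uniformly parabolic with bounded coefficients, which yields equicontinuity of the family $\{u(\cdot,t)\}_{t \geq 1}$ in $C^{\alpha}(\overline\Omega)$. The integrability then forces $\|u(\cdot,t)-1\|_{L^{2}} \to 0$ along every unbounded sequence, and equicontinuity upgrades this to $\|u(\cdot,t)-1\|_{L^{\infty}} \to 0$. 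Finally $\|v(\cdot,t)-1\|_{L^{\infty}} \to 0$ is immediate from elliptic regularity applied to $-\Delta(v-1) + (v-1) = u - 1$. The main obstacle I anticipate is matching the advertised sharp constant $K_{0}/16$: the factor $1/16 = (1/4)\cdot(1/4)$ is the product of the optimal Young parameter in the cross-term with the Helmholtz spectral bound, and both must be exploited simultaneously to avoid wasting a constant and rendering the hypothesis vacuous.
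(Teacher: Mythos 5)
Your proposal is correct and follows essentially the same route as the paper, which establishes the uniform bound \eqref{unib1} and then defers to the standard 2D bootstrap of \cite{BBTW15} for boundedness and to the relative-entropy argument of \cite{JKW18} for stabilization; your Part 2 reconstructs that argument faithfully, and the constant $\frac{K_0}{16}$ indeed arises exactly as the product of the Young constant $\frac14$ and the spectral bound $\sup_{\lambda\geq0}\lambda/(1+\lambda)^2=\frac14$ for $(I-\Delta)^{-1}$ with Neumann conditions. The only point to tighten is in Part 3: finiteness of $\int_0^\infty\|u(\cdot,t)-1\|_{L^2(\Omega)}^2\,dt$ by itself does not force convergence along \emph{every} sequence $t\to\infty$, so you should invoke the parabolic estimates in their space--time form (uniform $C^{\alpha,\alpha/2}$ bounds on $\overline\Omega\times[t,t+1]$ for $t\geq1$), which give the needed equicontinuity in $t$ as well and then yield $\|u(\cdot,t)-1\|_{L^\infty(\Omega)}\to0$ by the usual contradiction argument.
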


\begin{proof}[Proof of Theorem \ref{TH1}]
The proof concludes from Proposition \ref{Prop4.1} and Proposition \ref{Prop4.2}.
\end{proof}

\subsection{Proof of Theorem \ref{TH3}}
In this part, we prove Theorem \ref{TH3}.  To this aim, it is suffices to show the following uniform boundedness of $v(x,t)$ since the rest part of proof is the same as in \cite{TaoWin17}.
\begin{proposition}\label{Propvest1}
	Under the assumption of Theorem \ref{TH3}, there exists $C>0$ depending only on $\|u_0\|_{L^1(\Omega)}$, $\gamma$ and $\Omega$ such that for all $t\geq0$ and $x\in\Omega,$
	\begin{equation*}
		v(x,t)\leq C.
	\end{equation*}
\end{proposition}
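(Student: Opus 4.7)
The plan is to bypass the standard Alikakos--Moser iteration (which the authors announce to fail in this setting) by establishing $L^p$ bounds on $v$ that are uniform in both the exponent $p\ge 1$ and the time $t\ge 0$, with a constant that does not depend on $p$. Passing to the limit $p\to\infty$ then yields the desired pointwise bound.

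Concretely, I would multiply the key identity \eqref{keyid} (with $\mu=0$),
\[
v_t+\gamma(v)u=(I-\Delta)^{-1}[\gamma(v)u],
\]
by $v^{p-1}$ and integrate over $\Omega$. Exploiting the self-adjointness of $(I-\Delta)^{-1}$ one obtains
\[
\frac{1}{p}\frac{d}{dt}\int_\Omega v^p\,dx+\int_\Omega v^{p-1}\gamma(v)u\,dx=\int_\Omega \gamma(v)u\cdot(I-\Delta)^{-1}[v^{p-1}]\,dx.
\]
Growth condition (A2), together with the lower bound $v\ge v_*$ of Lemma \ref{lowerbound}, gives $\gamma(v)\ge c\,v^{-k}$ on $\Omega\times[0,\infty)$ for some $c>0$. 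Using $u=v-\Delta v$ and integrating by parts (the resulting term $(p-1-k)\int v^{p-2-k}|\nabla v|^2$ has a favorable sign once $p>k+1$), the dissipation on the left dominates a multiple of $\int v^{p-k}$. For the right-hand side, $\int \gamma(v)u\,dx\le \gamma(v_*)\Lambda$ by monotonicity of $\gamma$ and mass conservation, while the two-dimensional elliptic estimate $\|(I-\Delta)^{-1}[f]\|_{L^\infty}\le C_q\|f\|_{L^q}$ for any $q>1$ gives the pointwise control $\|(I-\Delta)^{-1}[v^{p-1}]\|_{L^\infty}\le C_q\|v\|_{L^{q(p-1)}}^{p-1}$.

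Next, I would combine this with the integrated bound $\int_0^t\int \gamma(v)u^2\,dxds\le C(1+t)$ of Lemma \ref{est0}, the entropy--dissipation estimate of Lemma \ref{glm1}, and a seed $L^{p_0}$ bound on $v$ (with some $p_0>k+1$) provided by the Moser--Trudinger-type argument of Nagai \cite{Nagai97}, applicable since $\|v\|_{H^1}$ is controlled via the second equation of \eqref{chemo1} and the conservation of mass. The uniform Gronwall inequality (Lemma \ref{uniformGronwall}), applied on intervals $[t,t+1]$, then converts the linear-in-$t$ bounds of Lemma \ref{est0} into genuine time-uniform bounds on $\|v(\cdot,t)\|_{L^p}$. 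Iterating in $p$ along a geometric sequence $p_j=\alpha^j p_0$ with $\alpha>1$ closes the argument once one verifies that the resulting constants remain bounded uniformly as $j\to\infty$.

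The main obstacle is precisely this last point: a Moser--Alikakos iteration typically yields exponentially growing constants in $p$. The delicate estimate promised in the introduction must exploit the self-adjoint structure of $(I-\Delta)^{-1}$ in the key identity --- which feeds $v^{p-1}$ into the mild operator rather than $u$, so the right-hand side is controlled by a lower-order norm of $v$ rather than of $u$ --- together with the genuine polynomial dissipation $\int v^{p-k}$ supplied by (A2), to ensure that each step of the iteration contracts quantitatively. Balancing the Sobolev exponent $q(p-1)$ against the dissipation exponent $p-k$, and tracking the elliptic constants $C_q$ so that $\|v\|_{L^p}$ stays bounded independently of $p$, is the heart of the proof.
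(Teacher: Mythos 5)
Your proposal does not close; it is a programme whose decisive step is left unproven, and as set up the iteration runs in the wrong direction. Testing the identity $v_t+\gamma(v)u=(I-\Delta)^{-1}[\gamma(v)u]$ with $v^{p-1}$ and estimating the right-hand side exactly as you indicate ($\int_\Omega\gamma(v)u\,dx\le\gamma(v_*)\Lambda$ together with $\|(I-\Delta)^{-1}[v^{p-1}]\|_{L^\infty(\Omega)}\le C_q\|v\|^{p-1}_{L^{q(p-1)}(\Omega)}$, $q>1$), you arrive at
\begin{equation*}
\frac{1}{p}\frac{d}{dt}\int_\Omega v^p\,dx+c\int_\Omega v^{p-k}\,dx\leq C_q\,\gamma(v_*)\Lambda\,\|v\|^{p-1}_{L^{q(p-1)}(\Omega)},
\end{equation*}
whose right-hand side involves $v$ at the integrability level $q(p-1)>p>p-k$, i.e.\ strictly above both the quantity you are estimating and the dissipation. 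Hence each step of the proposed upward iteration $p_j=\alpha^j p_0$ requires as input a bound at a \emph{higher} level than the one it produces; to break this circularity you must either send $q\downarrow 1$ (where $C_q$ blows up) or interpolate $\|v\|_{L^{q(p-1)}}$ between the dissipation norm $\|v\|_{L^{p-k}}$ and the conserved $\|v\|_{L^1}=\Lambda$, and then verify that all elliptic and interpolation constants remain under control as $p\to\infty$. This is precisely the point you yourself call ``the heart of the proof'' and do not carry out, so no pointwise bound is actually obtained. Your auxiliary inputs are also not in place: Lemma \ref{est0} only yields bounds growing linearly in $t$, the uniform-Gronwall conversion of these into time-uniform $L^p$ bounds is not spelled out, and the Moser--Trudinger ``seed'' would require a time-uniform $H^1$ bound for $v$ that is nowhere derived in your argument.

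For comparison, the paper never passes to the limit $p\to\infty$. It tests the $u$-equation with $v$ and uses $\Delta v=v-u$ to get a time-uniform $H^1$ bound for $v$ together with $\sup_t\int_t^{t+1}\int_\Omega u^2\gamma(v)\,dx\,ds\le C$, where (A2) enters only through $\gamma^{-1}(s)\le bs^k+C$ combined with the two-dimensional estimate $\|v\|_{L^p(\Omega)}\le C\|u_0\|_{L^1(\Omega)}$ for a fixed finite $p$. Then, for a single exponent $p\in(1,2)$, H\"older's inequality gives $\|v\|_{L^\infty(\Omega)}\le C\|u\|_{L^p(\Omega)}\le C\bigl(\int_\Omega u^2\gamma(v)\,dx\bigr)^{1/2}$, hence $\int_t^{t+1}\|v\|_{L^\infty(\Omega)}\,ds\le C$; finally the key identity is used only through the pointwise differential inequality $v_t\le\gamma(v_*)v$ (drop $u\gamma(v)\ge0$ and bound $(I-\Delta)^{-1}[u\gamma(v)]\le\gamma(v_*)v$), and Lemma \ref{uniformGronwall}, applied at each fixed $x$, yields $v(x,t)\le C$ for $t\ge1$, with Lemma \ref{pte1} covering $t\le1$. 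If you wish to salvage your route you must complete the $p$-uniform bookkeeping in full; otherwise the combination of one subcritical exponent with the pointwise Gronwall argument is both simpler and sufficient.
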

\begin{proof}
	First, multiplying the first equation  by $v$ and making use of the second equation of \eqref{chemo1}, we infer that
	\begin{align*}
	\frac{1}{2}\frac{d}{dt}\int_\Omega\left(|\nabla v|^2+v^2\right)dx&=\int_\Omega u\gamma(v)\Delta v dx\\
	&=\int_\Omega u\gamma(v)(v-u)dx\\
	&\leq-\frac12\int_\Omega u^2\gamma(v)dx+\frac12\int_\Omega v^2\gamma(v)dx.
	\end{align*}
Therefore, we obtain that
\begin{equation}\label{unibb1}
	\frac{d}{dt}\int_\Omega \left(|\nabla v|^2+v^2\right)dx+\int_\Omega u^2\gamma(v)dx\leq\int_\Omega v^2\gamma(v)dx.
\end{equation}
Next, thanks to the second equation of \eqref{chemo1} again, we observe that
\begin{align*}
	\|\nabla v\|_{L^2(\Omega)}^2=&-\int_\Omega v\Delta vdx\\
	\leq&\frac12\int_\Omega|\Delta v|^2\gamma(v)dx+\frac12\int_\Omega \frac{v^2}{\gamma(v)}dx\\
	\leq&\int_\Omega u^2\gamma(v)dx+\int_\Omega v^2\gamma(v)dx+\frac12\int_\Omega \frac{v^2}{\gamma(v)}dx.
\end{align*}
It follows from above inequalities that
\begin{equation}\label{unibb2}
	\frac{d}{dt}\int_\Omega \left(|\nabla v|^2+v^2\right)dx+\int_\Omega  \left(|\nabla v|^2+v^2\right)dx\leq2\int_\Omega v^2\gamma(v)dx+\frac12\int_\Omega \frac{v^2}{\gamma(v)}dx+\int_\Omega v^2dx.
\end{equation}
Now, due to our assumption (A2), we may infer that there exist $k>0$, $b>0$ and $s_b>v_*$ such that for all $s\geq s_b$
\begin{equation*}
	\gamma^{-1}(s)\leq bs^k
\end{equation*}and on the other hand, since $\gamma(\cdot)$ is decreasing,
\begin{equation*}
	\gamma^{-1}(s)\leq \gamma^{-1}(s_b)
\end{equation*}for all $0<s<s_b$.
Therefore, for all $s>0$, there holds
\begin{equation}\label{cond_gamma}
\gamma^{-1}(s)\leq bs^{k}+\gamma^{-1}(s_b).
\end{equation}
Then recalling the elliptic regularity estimate \cite{BS}
\begin{equation}\label{unibb3}
	\|v\|_{L^p(\Omega)}\leq C\|u\|_{L^1(\Omega)}=C\|u_0\|_{L^1(\Omega)}
\end{equation}for any $1\leq p<\infty$ when $n=2$, one can find $C>0$ depending on $\|u_0\|_{L^1(\Omega)}$, $\gamma$ and $\Omega$ such that
\begin{equation*}
	2\int_\Omega v^2\gamma(v)dx+\frac12\int_\Omega \frac{v^2}{\gamma(v)}dx+\int_\Omega v^2dx\leq C.
\end{equation*}
Then solving the differential inequality \eqref{unibb2} yields that
\begin{equation*}
	\sup\limits_{t\geq0}\left(\|\nabla v\|_{L^2(\Omega)}^2+\|v\|_{L^2(\Omega)}^2\right)\leq C.
\end{equation*}
In addition, a direct integration of \eqref{unibb1} with respect to time from $t$ to $t+1$ indicates that
\begin{equation}\label{estts_u}
	\sup\limits_{t>0}\int_t^{t+1}\int_\Omega u^2\gamma(v)dxds\leq C
\end{equation}with $C>0$ depending on $\|u_0\|_{L^1(\Omega)}$, $\gamma$ and $\Omega$ only.

Now, for any $1<p<2$, we infer by the Sobolev embedding theorem that
\begin{align*}
	\|v\|_{L^\infty(\Omega)}\leq &C\|u\|_{L^p(\Omega)}\\
	\leq&\left(\int_\Omega u^2\gamma(v)dx\right)^{\frac12}\left(\int_\Omega \gamma^{-\frac{p}{2-p}}(v)dx\right)^{\frac{2-p}{2p}}\\
	\leq&C\left(\int_\Omega u^2\gamma(v)dx\right)^{\frac12},
\end{align*}where we use \eqref{cond_gamma} and \eqref{unibb3} to deduce that
\begin{equation*}
	\int_\Omega \gamma^{-\frac{p}{2-p}}(v)dx\leq \int_\Omega \left(bv^k+\gamma^{-1}(s_b)\right)^{\frac{p}{2-p}}dx\leq C.
\end{equation*}
Then by \eqref{estts_u}, for any $t>0$ we obtain that
\begin{equation*}
	\int_t^{t+1}\|v\|_{L^\infty(\Omega)}ds\leq \int_t^{t+1}\int_\Omega u^2\gamma(v)dx+1\leq C
\end{equation*}
and thus for any fixed $x\in\Omega$,
\begin{equation*}
	\sup\limits_{t>0}\int_t^{t+1}v(s,x)ds\leq C
\end{equation*}
with $C>0$ depending only on $\|u_0\|_{L^1(\Omega)}$, $\gamma$ and $\Omega$. Finally, 
observing that
\begin{equation}\non
v_t+u\gamma(v)=(I-\Delta)^{-1}[u\gamma(v)]\leq \gamma(v_*)(I-\Delta)^{-1}[u]=\gamma(v_*)v,
\end{equation}
we may apply the uniform Gronwall inequality Lemma \ref{uniformGronwall} to obtain that for any $x\in\Omega$
\begin{equation*}
	v(x,t)\leq C\qquad\text{for}\;\;t\geq1,
\end{equation*}with some $C>0$ independent of $x\in\Omega$, which together with Lemma \ref{pte1} for $t\leq1$ concludes the proof.
\end{proof}

\begin{proof}[Proof of Theorem \ref{TH3}]
In light of Proposition \ref{Propvest1}, we can proceed along the same lines in \cite{TaoWin17}.
\end{proof}

\section{The Case $\gamma(v)=e^{-v}$ and $\mu=0$ in 2D}
This section is devoted to the special case $\gamma(v)=e^{-v}$ and $\mu=0$. Namely, we consider the following initial Neumann boundary value problem:
\begin{equation}
\begin{cases}\label{chemo2}
u_t=\Delta (ue^{-v})&x\in\Omega,\;t>0,\\
-\Delta v+v=u&x\in\Omega,\;t>0,\\
\partial_\nu u=\partial_\nu v=0,\qquad &x\in\partial\Omega,\;t>0,\\
u(x,0)=u_0(x),\qquad & x\in\Omega,
\end{cases}
\end{equation}with $\Omega\subset\mathbb{R}^2.$
\subsection{Uniform-in-time Boundedness with Sub-critical Mass}
In this part, we first prove the following uniform-in-time boundedness of the classical solutions with subcritical mass.
\begin{proposition}\label{prop1}
Assume $n=2$ and let
\begin{equation*}
\Lambda_c=\begin{cases}
8\pi\qquad\text{if}\;\Omega=\{x\in\mathbb{R}^2;\;|x|<R\}\;\;\text{and}\;u_0 \;\text{is radial in}\;x\\
4\pi\qquad\text{otherwise.}		
\end{cases}
\end{equation*}
If $\Lambda\triangleq\int_\Omega u_0dx<\Lambda_c$, then the global classical solution $(u,v)$ to system \eqref{chemo2} is uniform-in-time bounded in the sense that
\begin{equation*}
	\sup\limits_{t\in(0,\infty)}\left(\|u(\cdot,t)\|_{L^\infty(\Omega)}+\|v(\cdot,t)\|_{L^\infty(\Omega)}\right)<\infty.
\end{equation*}
\end{proposition}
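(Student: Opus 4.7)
The plan is to exploit the Lyapunov structure inherited from the classical Keller--Segel model to first control the entropy $\io u\log u\,dx$ and the $H^1$ norm of $v$ uniformly in time, then to upgrade these bounds into a uniform-in-time pointwise upper bound for $v$ through the key identity \eqref{keyid} together with a uniform Gronwall argument, and finally to bootstrap to a uniform $L^\infty$ bound for $u$ via the parabolic machinery of Section~4. For the first step, the dissipation identity \eqref{Lyapunov1} gives $E(u(t),v(t))\le E(u_0,v_0)$, and using $\io uv\,dx=\|v\|_{H^1}^2$ (from the second equation of \eqref{chemo2}) we can rewrite $E(u,v)=\io u\log u\,dx-\tfrac12\|v\|_{H^1(\Omega)}^2$. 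Under the subcritical mass assumption $\Lambda<\Lambda_c$, the Moser--Trudinger inequality used in Nagai \cite{Nagai97} furnishes a coercive lower bound for $E$, yielding $\sup_{t\ge 0}\bigl(\io u\log u\,dx+\|v(t)\|_{H^1}^2\bigr)\le C$. In addition, integrating \eqref{Lyapunov1} in time and invoking the elementary identity $ue^{-v}|\nabla\log u-\nabla v|^2=4|\nabla\sqrt{ue^{-v}}|^2$ together with the two-dimensional Gagliardo--Nirenberg inequality yields space--time integrability of $ue^{-v}$, say in $L^2_{\mathrm{loc}}([0,\infty);L^2(\Omega))$ with time-uniform bounds.

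The crux of the proof is the second step. Since in dimension two the elliptic regularity $\|(I-\Delta)^{-1}[f]\|_{L^\infty}\le C_p\|f\|_{L^p}$ holds for every $p>1$, the strategy is to obtain $\|v(t)\|_{L^p(\Omega)}\le C$ with $C$ independent of both $p$ and $t$, so that letting $p\to\infty$ yields the uniform $L^\infty$ bound. Testing \eqref{keyid} with $\mu=0$ and $\gamma(v)=e^{-v}$ against $v^{p-1}$ gives
\begin{equation*}
\tfrac{1}{p}\tfrac{d}{dt}\io v^p\,dx+\io v^{p-1}ue^{-v}\,dx=\io v^{p-1}(I-\Delta)^{-1}[ue^{-v}]\,dx.
\end{equation*}
I would then use the decomposition $(I-\Delta)^{-1}[ue^{-v}]=v-(I-\Delta)^{-1}[u(1-e^{-v})]$, the self-adjointness of $(I-\Delta)^{-1}$ to shift it onto $v^{p-1}$, and the bounds on $\io u\log u$, $\|v\|_{H^1}$, and the space--time $L^2$ integral of $ue^{-v}$ already in hand, to derive a differential inequality for $\io v^p\,dx$ whose coefficients and integrated quantities can be tracked in a $p$-independent manner. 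An application of the uniform Gronwall inequality (Lemma~\ref{uniformGronwall}) with $t_0=0$ and some fixed $r>0$ then produces the sought-after $\|v(t)\|_{L^p}\le C$ uniformly in $p$ and $t$.

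Once this pointwise bound on $v$ is available, $\gamma(v)=e^{-v}$ is bounded below by a strictly positive constant, so the first equation of \eqref{chemo2} is uniformly parabolic. A verbatim repetition of the chain Lemmas~\ref{glm1}--\ref{lemma_LP} followed by standard Moser iteration then delivers $\sup_{t>0}\|u(t)\|_{L^\infty(\Omega)}<\infty$, which in turn re-confirms $\sup_{t>0}\|v(t)\|_{L^\infty(\Omega)}<\infty$ by elliptic regularity and finishes the proof. The principal obstacle is precisely the second step: the classical Alikakos-type $L^p$-iteration \cite{Ali} produces constants that diverge as $p\to\infty$, so one cannot arrive at an $L^\infty$ bound merely by iterating over $p$. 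The required $p$-uniformity must be extracted from the structural cancellation built into the key identity \eqref{keyid} combined with the time-integrated dissipation coming from the Lyapunov functional, and this is what makes the argument delicate.
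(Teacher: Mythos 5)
Your first and third steps are essentially the paper's: the uniform bounds on $\io u\log u\,dx$, $\io uv\,dx$ and $\|v\|_{H^1(\Omega)}$ for subcritical mass are exactly the content of \cite[Lemma 3.4]{Nagai97} (Lemma \ref{ublm1}), and once a uniform pointwise bound on $v$ is available, rerunning Lemma \ref{lemma_LP} with Lemma \ref{fs} and standard iteration gives the uniform bound on $u$. Your space--time estimate from the dissipation identity and Gagliardo--Nirenberg is also correct, although it controls $\io u^2e^{-2v}\,dx$ rather than the paper's $\io u^2e^{-v}\,dx$ obtained in Lemma \ref{ublm2} by testing the first equation with $v$; this weaker quantity could still be used, at the price of invoking $\io e^{6v}\,dx$ instead of $\io e^{3v}\,dx$ later on.

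The genuine gap is your second step, which is the heart of the proposition. Testing \eqref{var0} (with $\mu=0$, $\gamma(v)=e^{-v}$) against $v^{p-1}$ and using your decomposition together with the positivity of $(I-\Delta)^{-1}[u(1-e^{-v})]$ yields nothing better than
\begin{equation*}
\frac{1}{p}\frac{d}{dt}\io v^p\,dx\;\leq\;\io v^{p-1}\,(I-\Delta)^{-1}[ue^{-v}]\,dx\;\leq\;\io v^p\,dx,
\end{equation*}
a differential inequality whose rate is proportional to $p$. To neutralize this via Lemma \ref{uniformGronwall} applied to $y_p(t)=\io v^p\,dx$ you would need $\int_t^{t+1}\io v^p\,dx\,ds\leq M^p$ with $M$ independent of $p$ and $t$, i.e.\ essentially a time-averaged $L^\infty$-type bound raised to the power $p$; but the a priori information you have collected (entropy, $\|v\|_{H^1}$, space--time $L^2$ of $ue^{-v}$) gives at best $\int_t^{t+1}\|v\|_{L^\infty(\Omega)}^2\,ds\leq C$, and only after an argument your plan omits, namely $\|v\|_{L^\infty(\Omega)}\leq C\|u\|_{L^{3/2}(\Omega)}\leq C\bigl(\io u^2e^{-v}\,dx\bigr)^{1/2}\bigl(\io e^{3v}\,dx\bigr)^{1/6}$ via the embedding $W^{2,3/2}(\Omega)\hookrightarrow L^\infty(\Omega)$ and the Trudinger--Moser inequality with the uniform $H^1$ bound on $v$ (Lemma \ref{ublm3}). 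No concrete mechanism is offered by which the ``structural cancellation'' in \eqref{keyid} removes the $p$-dependence, and the paper explicitly notes that the route of $p$-uniform $L^p$ bounds fails here. Moreover, once the time-averaged $L^\infty$ control is in hand, the $L^p$ detour is unnecessary: the paper observes that \eqref{var0} gives the scalar inequality $v_t\leq (I-\Delta)^{-1}[ue^{-v}]\leq (I-\Delta)^{-1}[u]=v$ at every fixed $x$, applies Lemma \ref{uniformGronwall} pointwise in $x$ (with $g\equiv1$, $h\equiv0$ and $a_3$ supplied by the time-averaged bound) to get $v(x,t)\leq C$ for $t\geq1$, and uses Lemma \ref{pte1} for $t\leq1$. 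As written, your step two does not close, and it is precisely these two ingredients --- the Trudinger--Moser based conversion of the dissipation bound into time-averaged $L^\infty$ control of $v$, and the pointwise-in-$x$ uniform Gronwall argument --- that are needed to repair it.
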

As noticed in Introduction, system \eqref{chemo2} has the Lyapunov functional.
\begin{lemma} There holds
\begin{equation}\label{Lyapunov}
\frac{d}{dt}E(u,v)(t)+	\int_\Omega ue^{-v}\left|\nabla \log u-\nabla v\right|^2dx=0,
\end{equation} where the  functional $E(\cdot,\cdot)$ is defined by
\begin{equation*}\non
E(u,v)=\int_\Omega \left(u\log u+\frac12|\nabla v|^2+\frac12 v^2-uv\right)dx.
\end{equation*}
\end{lemma}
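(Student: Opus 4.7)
The plan is to differentiate $E(u,v)$ in time, substitute the two equations in \eqref{chemo2}, and identify the dissipation. The key preliminary step is to rewrite the first equation in flux form: since $\Delta(ue^{-v})=\nabla\cdot(e^{-v}\nabla u - ue^{-v}\nabla v)=\nabla\cdot\bigl(ue^{-v}(\nabla\log u - \nabla v)\bigr)$, we have
\begin{equation*}
u_t=\nabla\cdot\bigl(ue^{-v}(\nabla\log u-\nabla v)\bigr),
\end{equation*}
with $\partial_\nu u=\partial_\nu v=0$ on $\partial\Omega$, so all boundary terms arising from integration by parts will vanish. This is the only technical point to keep in mind, and it is routine once the flux form is written.

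Next I would handle the three pieces of $E$ separately. For the entropy piece, multiplying $u_t$ by $\log u + 1$ and integrating by parts gives
\begin{equation*}
\frac{d}{dt}\int_\Omega u\log u\,dx = -\int_\Omega ue^{-v}\,\nabla\log u\cdot(\nabla\log u-\nabla v)\,dx.
\end{equation*}
For the Dirichlet and mass parts of the $v$-energy, I use $\partial_t(\tfrac12|\nabla v|^2+\tfrac12 v^2)$ and integrate by parts using the elliptic equation $-\Delta v+v=u$, which yields
\begin{equation*}
\frac{d}{dt}\int_\Omega\Bigl(\tfrac12|\nabla v|^2+\tfrac12 v^2\Bigr)dx = \int_\Omega(-\Delta v+v)v_t\,dx = \int_\Omega u\,v_t\,dx.
\end{equation*}
Finally, for the cross term,
\begin{equation*}
\frac{d}{dt}\int_\Omega uv\,dx=\int_\Omega u_t v\,dx+\int_\Omega u\,v_t\,dx.
\end{equation*}

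Combining the three contributions, the terms $\int u v_t$ cancel, and I am left with
\begin{equation*}
\frac{d}{dt}E(u,v)=-\int_\Omega ue^{-v}\nabla\log u\cdot(\nabla\log u-\nabla v)\,dx-\int_\Omega u_t\,v\,dx.
\end{equation*}
Substituting the flux form of $u_t$ into the last integral and integrating by parts gives
\begin{equation*}
-\int_\Omega u_t v\,dx=\int_\Omega ue^{-v}(\nabla\log u-\nabla v)\cdot\nabla v\,dx,
\end{equation*}
so the two remaining integrals combine into the perfect square
\begin{equation*}
\frac{d}{dt}E(u,v)=-\int_\Omega ue^{-v}\bigl|\nabla\log u-\nabla v\bigr|^2\,dx,
\end{equation*}
which is exactly \eqref{Lyapunov}. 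The main (minor) obstacle is simply the careful bookkeeping of the cross term $\int uv$, whose time derivative must be split into $\int u_t v+\int u v_t$ so that $\int u v_t$ cancels against the $v$-energy contribution while $\int u_t v$ provides precisely the missing $-2\nabla\log u\cdot\nabla v+|\nabla v|^2$ needed to complete the square.
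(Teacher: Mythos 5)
Your proposal is correct and is essentially the paper's own argument: the paper tests the first equation with $\log u - v$ and the second with $v_t$ and adds, which is exactly your termwise differentiation of $E$ with the same cancellation of $\int_\Omega u v_t\,dx$ and the same integration by parts in flux form producing the perfect square. No gaps; all boundary terms vanish as you note because $\partial_\nu u=\partial_\nu v=0$.
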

\begin{proof}
	Multiplying the first equation by $\log u-v$, the second equation by $v_t$ and integrating by parts, then adding the resultants together, we get
\begin{equation}\non
	\frac{d}{dt}\int_\Omega \left(u\log u+\frac12|\nabla v|^2+\frac12 v^2-uv\right)dx+\int_\Omega ue^{-v}\left|\nabla \log u-\nabla v\right|^2dx=0.
\end{equation}This completes the proof.	
\end{proof}
Since the energy $E(\cdot,\cdot)$ is the same as that of the classical Keller--Segel model, we may recall \cite[Lemma 3.4]{Nagai97} stated as follows.
\begin{lemma}\label{ublm1} 
If $\Lambda<\Lambda_c$, there exists a positive constant $C$ independent of $t$ such that
\begin{equation*}
	\int_\Omega uvdx\leq C\qquad\text{and}\qquad|E(u(t),v(t))|\leq C,\;\;\forall \;t\geq0.
\end{equation*}	
\end{lemma}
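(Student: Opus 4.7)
The plan is to combine the Lyapunov structure established just above with a sharp Moser--Trudinger--Onofri inequality in two dimensions, which is the classical sub-critical mass argument pioneered by Nagai--Senba--Yoshida for Keller--Segel systems, specialized here via the elliptic identity $\int_\Omega uv\, dx = \int_\Omega(|\nabla v|^2+v^2)\, dx$.

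First I would test the second equation of \eqref{chemo2} by $v$ and integrate by parts to get
\begin{equation*}
\int_\Omega uv\, dx = \int_\Omega \bigl(|\nabla v|^2 + v^2\bigr)\, dx,
\end{equation*}
so that the Lyapunov functional collapses to the simpler form $E(u,v) = \int_\Omega u \log u\, dx - \tfrac{1}{2}\int_\Omega uv\, dx$. The dissipation identity \eqref{Lyapunov} yields $E(u(t), v(t)) \leq E(u_0, v_0)$ for every $t \geq 0$, so it suffices to extract a lower bound of the form $E(u,v) \geq c \int_\Omega uv\, dx - C$ with $c>0$ whenever $\Lambda<\Lambda_c$.

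The main step is a variational duality between entropy and exponential integrals. Writing $u = \Lambda \rho$ with $\rho$ a probability density and exploiting non-negativity of the relative entropy of $\rho$ with respect to $e^v/\int_\Omega e^v\, dx$, one obtains
\begin{equation*}
\int_\Omega u \log u\, dx \geq \int_\Omega uv\, dx - \Lambda \log \int_\Omega e^v\, dx + \Lambda \log \Lambda - \Lambda \log|\Omega|.
\end{equation*}
I would then invoke the Moser--Trudinger--Onofri inequality, which on a smooth bounded planar domain with Neumann data supplies
\begin{equation*}
\log \int_\Omega e^v\, dx \leq \frac{1}{2\Lambda_c}\int_\Omega |\nabla v|^2\, dx + \frac{1}{|\Omega|}\int_\Omega v\, dx + C(\Omega),
\end{equation*}
with $\Lambda_c = 4\pi$ in general and the improved value $\Lambda_c = 8\pi$ valid when $\Omega$ is a disc and $v$ is radially symmetric. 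Using $\int_\Omega v\, dx = \int_\Omega u\, dx = \Lambda$ (obtained by integrating the second equation) together with $\int_\Omega |\nabla v|^2\, dx \leq \int_\Omega uv\, dx$, and inserting into the previous display, I arrive at
\begin{equation*}
E(u, v) \geq \frac{\Lambda_c - \Lambda}{2\Lambda_c} \int_\Omega uv\, dx - C(\Lambda, \Omega).
\end{equation*}

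Combined with $E(u(t), v(t)) \leq E(u_0, v_0)$, this forces $\int_\Omega uv\, dx \leq C$ uniformly in time as soon as $\Lambda < \Lambda_c$. The bound $|E(u(t), v(t))| \leq C$ then follows at once: the upper bound is the Lyapunov inequality, while the lower bound uses $s\log s \geq -e^{-1}$ together with the just-obtained uniform bound on $\int_\Omega uv\, dx$. The only delicate ingredient is the Moser--Trudinger--Onofri inequality with the sharp constant $1/(8\pi)$ and its improvement to $1/(16\pi)$ in the radial class, which is precisely what is responsible for the dichotomy $\Lambda_c = 4\pi$ versus $\Lambda_c = 8\pi$ in the statement.
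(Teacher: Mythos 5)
Your argument is correct and is essentially the same approach as the paper's: the paper simply observes that $E$ coincides with the classical Keller--Segel functional and cites \cite[Lemma 3.4]{Nagai97}, whose proof is exactly your combination of the entropy--exponential duality, the Trudinger--Moser inequality with constant $1/(8\pi)$ (improved to $1/(16\pi)$ in the radial class), the identity $\int_\Omega uv\,dx=\int_\Omega(|\nabla v|^2+v^2)\,dx$, and the monotonicity of $E$ along solutions. One cosmetic point: the Jensen/duality step yields $\int_\Omega u\log u\,dx\geq \int_\Omega uv\,dx-\Lambda\log\int_\Omega e^{v}\,dx+\Lambda\log\Lambda$ without the extra $-\Lambda\log|\Omega|$ you wrote (which as stated can fail when $|\Omega|<1$), but since this only affects an additive constant depending on $\Lambda$ and $\Omega$, the conclusion is unaffected.
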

\begin{lemma}\label{ublm2}If $\Lambda<\Lambda_c$, then there holds
	\begin{equation*}
	\sup\limits_{t\geq0}\int_t^{t+1}\int_\Omega  e^{-v(s)}u^2(s)dxds\leq C,
	\end{equation*}where $C>0$ depends on $\Omega$ and the initial datum only.
\end{lemma}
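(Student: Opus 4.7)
My plan is to extract a clean identity for $\int_\Omega u^2 e^{-v}\,dx$ out of the Lyapunov dissipation by computing $\int_\Omega v\,u_t\,dx$ directly. Using the equation $u_t=\Delta(ue^{-v})$, the Neumann boundary conditions, and the elliptic identity $\Delta v=v-u$, two integrations by parts give
\begin{equation*}
\int_\Omega v\,u_t\,dx = \int_\Omega ue^{-v}\Delta v\,dx = \int_\Omega uv\,e^{-v}\,dx - \int_\Omega u^2 e^{-v}\,dx.
\end{equation*}

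Next, I differentiate $E(u,v)=\int_\Omega\!\bigl(u\log u+\tfrac12|\nabla v|^2+\tfrac12 v^2-uv\bigr)dx$ term by term. Because $\tfrac12\frac{d}{dt}\|v\|_{H^1(\Omega)}^2 = \int_\Omega(v-\Delta v)v_t\,dx = \int_\Omega u\,v_t\,dx$ by the elliptic equation, the two occurrences of $\int u\,v_t\,dx$ cancel and I obtain
\begin{equation*}
\frac{d}{dt}E(u,v) = \frac{d}{dt}\int_\Omega u\log u\,dx - \int_\Omega v\,u_t\,dx.
\end{equation*}
Combining this with the Lyapunov identity $\frac{d}{dt}E=-D(t)$ from \eqref{Lyapunov}, where $D(t):=\int_\Omega ue^{-v}|\nabla\log u-\nabla v|^2\,dx$, and with the formula above for $\int v\,u_t\,dx$, I arrive at the clean pointwise-in-time identity
\begin{equation*}
\int_\Omega u^2 e^{-v}\,dx = \int_\Omega uv\,e^{-v}\,dx - \frac{d}{dt}\int_\Omega u\log u\,dx - D(t).
\end{equation*}

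Integrating over $[t,t+1]$ yields
\begin{equation*}
\int_t^{t+1}\!\!\!\int_\Omega u^2 e^{-v}\,dx\,ds = \int_t^{t+1}\!\!\!\int_\Omega uv\,e^{-v}\,dx\,ds - \Bigl[\int_\Omega u\log u\,dx\Bigr]_t^{t+1} - \int_t^{t+1}\!D(s)\,ds.
\end{equation*}
Each term on the right is uniformly controlled by Lemma \ref{ublm1}: we have $\int_\Omega uv\,e^{-v}\,dx\leq\int_\Omega uv\,dx\leq C$; the identity $\int_\Omega u\log u\,dx = E+\int_\Omega uv\,dx-\tfrac12\|v\|_{H^1(\Omega)}^2$ together with $|E|,\int_\Omega uv\,dx\leq C$ yields the upper bound $\int_\Omega u\log u\,dx\leq C$, while $s\log s\geq -1/e$ bounds it below by $-|\Omega|/e$; and integrating \eqref{Lyapunov} together with $|E|\leq C$ gives $\int_t^{t+1}D(s)\,ds\leq 2\sup_s|E|\leq C$, which enters with the favorable sign $-\int D\leq 0$. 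The claimed estimate follows immediately.

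The conceptual content is recognizing that the elliptic equation $-\Delta v+v=u$ does double duty: first it forces the $v_t$-cross-terms to cancel when $E$ is differentiated, and then it lets the direct computation of $\int_\Omega v\,u_t\,dx$ produce exactly $\int_\Omega u^2 e^{-v}\,dx$ (with the favorable sign), so that no unbounded gradient quantities remain to be estimated. I expect no serious technical obstacle beyond executing these algebraic manipulations carefully.
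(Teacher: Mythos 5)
Your proposal is correct, and its core step is the same as the paper's: test the first equation with $v$, integrate by parts, and use $\Delta v=v-u$ to produce $\int_\Omega uve^{-v}\,dx-\int_\Omega u^2e^{-v}\,dx$ on the right, then control $\int_\Omega uve^{-v}\,dx\le\int_\Omega uv\,dx\le C$ by Lemma \ref{ublm1}. Where you diverge is in the treatment of the remaining term $\int_\Omega v\,u_t\,dx$: the paper handles it directly, writing $u_t=v_t-\Delta v_t$ from the time-differentiated elliptic equation so that $\int_\Omega v\,u_t\,dx=\frac12\frac{d}{dt}\bigl(\|\nabla v\|_{L^2(\Omega)}^2+\|v\|_{L^2(\Omega)}^2\bigr)$, and the time-boundary terms are then bounded at once since $\|v\|_{H^1(\Omega)}^2=\int_\Omega uv\,dx\le C$. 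You instead route through the Lyapunov identity \eqref{Lyapunov}, which expresses $\int_\Omega v\,u_t\,dx$ as $\frac{d}{dt}\int_\Omega u\log u\,dx+D(t)$; this is also valid, but it requires the extra (correctly supplied) two-sided bounds on $\int_\Omega u\log u\,dx$ via $|E|\le C$, $\int_\Omega uv\,dx\le C$ and $s\log s\ge -1/e$, plus the sign of the dissipation. The paper's bookkeeping is thus a bit shorter and does not invoke the energy-dissipation identity at all for this lemma, while your version makes the link to the entropy explicit; both rest ultimately on Lemma \ref{ublm1}, and both yield the claimed uniform bound.
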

\begin{proof}
	Multiplying the first equation of \eqref{chemo1} by $v$ and integrating over $\Omega$, we obtain that
	\begin{equation*}
	\int_\Omega u_t vdx=\int_\Omega e^{-v}u\Delta v dx.
	\end{equation*}
	A substitution of the second equation into the above equality implies that
	\begin{align*}
	\int_\Omega (-\Delta v_t+v_t)v dx+\int_\Omega e^{-v}u^2dx=\int_\Omega e^{-v}uvdx.
	\end{align*}
	Hence, we have
	\begin{equation}\label{unb0}
	\frac12\frac{d}{dt}(\|\nabla v\|_{L^2(\Omega)}^2+\|v\|_{L^2(\Omega)}^2)+\int_\Omega e^{-v}u^2dx=\int_\Omega e^{-v}uvdx\leq C.
	\end{equation}Then the assertion follows from  an integration with respect to time from $t$ to $t+1$ due to the fact $\|v\|_{H^1}\leq C$ when $\Lambda<\Lambda_c$ by Lemma \ref{ublm1}. This completes the proof.
\end{proof}

\begin{lemma}\label{ublm3}
	If $\Lambda<\Lambda_c$, then there exists $C>0$ depending on $\Omega$ and the initial data such that for all $x\in\Omega$
	\begin{equation*}
	\sup\limits_{t\geq0}	v(x,t)\leq C.
	\end{equation*}	
\end{lemma}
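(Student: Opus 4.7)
The plan is to apply the uniform Gronwall inequality (Lemma \ref{uniformGronwall}) pointwise in $x$ to $y(t):=v(x,t)$, obtaining the forcing from the key identity \eqref{var0} and the required integral bound on $y$ from a time-averaged $L^\infty$ estimate on $v$. First, specialising \eqref{var0} to $\mu=0$, $\gamma(v)=e^{-v}$ and dropping the nonnegative term $ue^{-v}$ yields $v_t(x,t)\leq h(t):=\|(I-\Delta)^{-1}[ue^{-v}](\cdot,t)\|_{L^\infty(\Omega)}$. Since $v\geq 0$ we have $(ue^{-v})^2\leq u^2 e^{-v}$, and since $\Omega\subset\mathbb{R}^2$ the embedding $W^{2,2}\hookrightarrow L^\infty$ combined with elliptic regularity for $I-\Delta$ gives $h(t)\leq C\|ue^{-v}\|_{L^2(\Omega)}\leq C(\int_\Omega u^2 e^{-v}\,dx)^{1/2}$. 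Cauchy-Schwarz in time and Lemma \ref{ublm2} then yield $\int_t^{t+1}h(s)\,ds\leq C$ uniformly in $t$.

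The crucial step is to bound $\int_t^{t+1}\|v(s)\|_{L^\infty(\Omega)}\,ds$ uniformly in $t$. By Lemma \ref{ublm1}, $\|v(t)\|_{H^1(\Omega)}\leq C$ uniformly in time, and hence the Moser-Trudinger (or Onofri) inequality in two dimensions delivers $\sup_t\int_\Omega e^{\alpha v}\,dx\leq C_\alpha$ for every $\alpha>0$. Fixing any $p\in(1,2)$, H\"older's inequality gives
\begin{equation*}
\int_\Omega u^p\,dx=\int_\Omega(u^2 e^{-v})^{p/2}\,e^{pv/2}\,dx\leq\Bigl(\int_\Omega u^2 e^{-v}\,dx\Bigr)^{p/2}\Bigl(\int_\Omega e^{pv/(2-p)}\,dx\Bigr)^{(2-p)/2}\leq C\Bigl(\int_\Omega u^2 e^{-v}\,dx\Bigr)^{p/2}.
\end{equation*}
Standard elliptic regularity for $-\Delta v+v=u$ combined with $W^{2,p}\hookrightarrow L^\infty(\Omega)$ (valid since $p>1$ and $n=2$) then gives $\|v(s)\|_{L^\infty(\Omega)}\leq C\|u(s)\|_{L^p(\Omega)}$, and another application of Cauchy-Schwarz together with Lemma \ref{ublm2} produces $\int_t^{t+1}\|v(s)\|_{L^\infty(\Omega)}\,ds\leq C$ uniformly in $t$.

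Finally, I apply Lemma \ref{uniformGronwall} to $y(t)=v(x,t)$ for each fixed $x\in\Omega$, with $r=1$, $g\equiv 0$, forcing $h(t)$ from the first step and $a_3:=\int_t^{t+1}\|v(s)\|_{L^\infty(\Omega)}\,ds\leq C$ from the second step, both uniform in $x$. This yields $v(x,t+1)\leq C$ for every $x\in\Omega$ and every $t\geq 0$, which together with the finite-time bound from Lemma \ref{pte1} for $t\in[0,1]$ proves the claim. The main obstacle I anticipate is the second step: converting the space-time bound $\int_t^{t+1}\int_\Omega u^2 e^{-v}\,dx\,ds\leq C$ into a time-averaged $L^\infty$ bound on $v$ forces one to control the factor $e^{pv/(2-p)}$ in the H\"older estimate, and it is precisely here that the subcritical-mass hypothesis $\Lambda<\Lambda_c$ plays an essential role, via the uniform $H^1$-bound on $v$ coming from the Lyapunov functional and hence the Moser-Trudinger exponential integrability---features which, as Theorem \ref{TH2} shows, must fail once $\Lambda$ exceeds the critical value.
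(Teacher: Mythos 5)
Your proposal is correct and follows essentially the same route as the paper: the uniform $H^1$-bound from Lemma \ref{ublm1} plus Trudinger--Moser, the H\"older splitting of $u^p$ against $u^2e^{-v}$ and $e^{\alpha v}$, elliptic regularity with the two-dimensional Sobolev embedding to get $\|v\|_{L^\infty}\leq C\bigl(\int_\Omega u^2e^{-v}\,dx\bigr)^{1/2}$, the space-time bound of Lemma \ref{ublm2}, the uniform Gronwall lemma applied pointwise in $x$, and Lemma \ref{pte1} for small times. The only cosmetic deviation is your Gronwall setup ($g\equiv0$ with forcing $h(t)=\|(I-\Delta)^{-1}[ue^{-v}]\|_{L^\infty(\Omega)}$) versus the paper's simpler comparison $v_t\leq(I-\Delta)^{-1}[u]=v$ (i.e.\ $g\equiv1$, $h\equiv0$), both of which work.
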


\begin{proof}
First, we apply the Sobolev embedding theorem, the elliptic regularity theorem and H\"older's inequality to infer that
\begin{align*}
\|v\|_{L^\infty(\Omega)}\leq &C\|v\|_{W^{2,\frac32}(\Omega)}\non\\
\leq&C\|u\|_{L^{\frac32}(\Omega)}\non\\
=&C\left(\int_\Omega u^{\frac32}dx\right)^{\frac23}\non\\
\leq&C\left(\int_\Omega u^2 e^{-v}dx\right)^{\frac12}\left(\int_\Omega e^{3v}dx\right)^{\frac16}\non\\
\leq&C\left(\int_\Omega u^2 e^{-v}dx\right)^{1/2},
\end{align*}
where we used the 2D Trudinger-Moser inequality \cite[Theorem 2.2]{Nagai97} to deduce that
\begin{align*}\non
	\int_\Omega e^{3v}dx\leq Ce^{C(\|\nabla v\|_{L^2(\Omega)}^2+\|v\|_{L^2(\Omega)}^2)}
\end{align*}with $C>0$ depending only on $\Omega.$
Thus, by Lemma \ref{ublm1} and Lemma \ref{ublm2}, for any $t\geq0$, there holds
\begin{equation*}
	\int_t^{t+1}\|v\|_{L^\infty(\Omega)}^2ds\leq C\int_{t}^{t+1}\int_\Omega u^2 e^{-v}dxds\leq C,
\end{equation*}which due to Young's inequality indicates that
\begin{equation*}\non
\int_t^{t+1}\|v\|_{L^\infty(\Omega)}ds\leq\int_t^{t+1}\|v\|^2_{L^\infty(\Omega)}+C\leq C.
\end{equation*}
Hence, for any $x\in\Omega$ and $t\geq0$, we obtain that
\begin{equation}\label{unint00}
\int_t^{t+1}v(x,s)ds\leq\int_t^{t+1}\|v\|_{L^\infty(\Omega)}ds\leq C.
\end{equation}
Observing that 
\begin{equation}\non
v_t+ue^{-v}=(I-\Delta)^{-1}[ue^{-v}]\leq (I-\Delta)^{-1}[u]=v,
\end{equation}
 we may fix $x\in\Omega$ and apply Lemma \ref{uniformGronwall} to deduce that
\begin{equation}\non
v(x,t)\leq C\;\;\text{for all}\;t\geq1.
\end{equation}
Since  $C>0$ above is independent of $x$ and
\begin{equation}\non
	v(x,t)\leq v_0(x)e^{e^{-v_*}}\leq ev_0(x)\quad\text{for any}\;x\in\Omega\;\text{and}\;t\in[0,1]
\end{equation}
due to Lemma \ref{pte1}, we conclude that
\begin{equation}\non
	\sup\limits_{t\geq0} v(x,t)\leq C.
\end{equation}This completes the proof.
\end{proof} 

\begin{proof}[Proof of Proposition \ref{prop1}]
Proceeding  along the same lines in the proof of Lemma \ref{lemma_LP}, by Lemma \ref{ublm3} we have \eqref{Lpineq}: 
	\begin{align*}
		\dfrac{d}{dt}\io u^p dx
		+C\io u^{p-2}{|\nabla u|}^2 dx
		\leq
		C \io u^{p+1} dx,
	\end{align*}
where the constant $C>0$ is independent of the time interval $T$.  Noticing that the H\"older's inequality implies
\begin{align*}
\io u^p dx \leq \left( \io u^{p+1} dx\right)^{\frac{p}{p+1}}\cdot |\Omega|^{\frac{1}{p+1}},
\end{align*}
thus
\begin{align*}
|\Omega|^{-\frac{1}{p}} \left( \io u^{p} dx\right)^{\frac{p+1}{p}}
\leq \io u^{p+1},
\end{align*}
and adding $\int_{\Omega}u^{p+1} dx$ both sides, we have
	\begin{align*}
		\dfrac{d}{dt}\io u^p dx
		+|\Omega|^{-\frac{1}{p}} \left( \io u^{p} dx\right)^{\frac{p+1}{p}}
		+C\io u^{p-2}{|\nabla u|}^2 dx
		\leq
		C \io u^{p+1} dx.
	\end{align*}
	Picking $s>0$ sufficiently large in Lemma \ref{fs} and recalling Lemma \ref{ublm1}, we deduce that
	\begin{align*}
			\dfrac{d}{dt}\io u^p \,dx 
	+|\Omega|^{-\frac{1}{p}} \left( \io u^{p} dx\right)^{\frac{p+1}{p}}
		\leq
		C,
	\end{align*}
	and by means of ODE analysis we obtain a uniform-in-time $L^p$-bound for $u$, which concludes the proof.
\end{proof}

\subsection{Unboundedness in Infinite Time}
Stationary solutions $(u,v)$ to \eqref{chemo2} satisfy that
\begin{align*}
\begin{cases}
0=\nabla \cdot ue^{-v} \nabla \left(\log u - v \right)
&\mathrm{in}\ \Omega, \\[1mm]
0=\Delta v -v+u
&\mathrm{in}\ \Omega, \\
u > 0, \ v > 0& \mathrm{in}\ \Omega, \\ 
\displaystyle \frac{\partial u}{\partial \nu} =\frac{\partial v}{\partial \nu}  =0 & \mathrm{on}\ \partial \Omega.
\end{cases}
\end{align*}
Put $\Lambda = \|u\|_{L^1(\Omega)} \in (0,\infty)$. 
In view of the mass conservation and the boundary conditions, 
the above system can be rewritten as the following:
\begin{align} \label{eqn:biharmoniceqn}
\begin{cases}
\displaystyle	 v-\Delta v = \frac{\Lambda}{\int_\Omega e^{v}}
e^{ v} & \mathrm{in}\ \Omega, \\[1mm]
\displaystyle u = \frac{\Lambda}{\int_\Omega e^{ v}}e^{ v}& \mathrm{in}\ \Omega, \\[1mm]
\displaystyle \frac{\partial v}{\partial \nu}  =0 & \mathrm{on}\ \partial \Omega. 
\end{cases}
\end{align}
 Invoking the so-called non-smooth Lojasiewicz--Simon inequality established in \cite{FLP07}, we can prove the following convergence result in the two-dimensional setting. Note that the Lyapunov functional is the same as that for the classical Keller--Segel equation. In addition, the only difference in the dissipation terms is  the extra weighted function $e^{-v}$ in \eqref{Lyapunov},  which is now uniform-in-time bounded from above and below. Thus the proof is  the same as in \cite{FLP07} and we omit the detail here.
\begin{proposition}\label{prop2}
	Let $(u,v)$ be a classical positive solution to problem \eqref{chemo2} in
	$\Omega \times (0,\infty)$. If the solution is uniformly-in-time bounded, 
	there exists a stationary solution $(u_s,v_s)$ to \eqref{eqn:biharmoniceqn} such that 
	\[
	\lim_{t \rightarrow \infty} (u(t),v(t)) = (u_s,v_s) \quad \mbox{{\rm in} }C^2(\overline{\Omega}).
	\]
	as well as
	$$
	E(u_s,v_s) \leq E(u_0,v_0).
	$$
\end{proposition}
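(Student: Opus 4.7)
The plan is to follow the Lojasiewicz--Simon strategy developed in \cite{FLP07} for the classical Keller--Segel equation, adapted to the weighted dissipation appearing in \eqref{Lyapunov}. First I would combine the hypothesized uniform-in-time bound $\|u(\cdot,t)\|_{L^\infty(\Omega)} + \|v(\cdot,t)\|_{L^\infty(\Omega)} \leq C$ with parabolic Schauder estimates applied to the first equation of \eqref{chemo2} and elliptic regularity applied to the second to obtain uniform bounds on $(u,v)$ in $C^{2+\alpha,1+\alpha/2}(\overline{\Omega}\times[1,\infty))$. This yields precompactness of the trajectory $\{(u(t),v(t)):t\geq 1\}$ in $C^2(\overline{\Omega})^2$ and, via the strict positivity provided by Lemma \ref{lowerbound}, uniform two-sided bounds of the form $0<c_1\leq u,\,v,\,e^{-v}\leq c_2$.

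Second, from the Lyapunov identity \eqref{Lyapunov}, $E(u(t),v(t))$ is non-increasing, so monotone convergence yields $E_\infty := \lim_{t\to\infty} E(u(t),v(t)) \leq E(u_0,v_0)$, together with
\begin{equation*}
\int_1^\infty \int_\Omega u\,e^{-v}\,|\nabla \log u - \nabla v|^2\, dx\, dt < \infty.
\end{equation*}
Hence along any sequence $t_n \to \infty$, compactness in $C^2$ combined with vanishing of the dissipation along a subsequence shows that the $\omega$-limit set $\omega(u_0,v_0)$ is non-empty and consists entirely of solutions $(u_s,v_s)$ of \eqref{eqn:biharmoniceqn}; moreover $E \equiv E_\infty$ on $\omega(u_0,v_0)$, which already implies the claimed energy inequality.

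Third, the crux is localising near an arbitrary $(u_s,v_s) \in \omega(u_0,v_0)$ via a Lojasiewicz--Simon inequality. Since the stationary problem \eqref{eqn:biharmoniceqn} is driven by the real-analytic nonlinearity $e^{v}$, the non-smooth version of the Lojasiewicz--Simon inequality proved in \cite{FLP07} applies: there exist $\theta \in (0,\tfrac12]$, $C>0$, and a $C^2$-neighborhood $\mathcal{U}$ of $(u_s,v_s)$ such that for $(u,v) \in \mathcal{U}$,
\begin{equation*}
|E(u,v)-E(u_s,v_s)|^{1-\theta} \leq C\left(\int_\Omega u\,e^{-v}\,|\nabla\log u - \nabla v|^2\, dx\right)^{1/2}.
\end{equation*}
Because the weight $e^{-v}$ in the dissipation is bounded above and below by $c_1,c_2>0$, the weighted and unweighted dissipation norms are equivalent, so the inequality in \cite{FLP07} carries over with only cosmetic modifications.

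Finally, the standard Lojasiewicz--Simon trajectory argument applies. Differentiating $(E(u(t),v(t))-E_\infty)^\theta$ along the flow, using \eqref{Lyapunov} and the inequality above on time intervals where the trajectory stays in $\mathcal{U}$, one deduces $\int_T^\infty \|u_t\|_{L^2(\Omega)}\, dt <\infty$ for $T$ sufficiently large, so the trajectory has finite length in $L^2$. Combined with the $C^2$ precompactness, this upgrades the subsequential convergence to full convergence and forces $\omega(u_0,v_0)=\{(u_s,v_s)\}$ and $(u(t),v(t)) \to (u_s,v_s)$ in $C^2(\overline{\Omega})$. The main obstacle is verifying that the analytic Lojasiewicz--Simon inequality of \cite{FLP07} truly transfers to our weighted setting; but as noted in the excerpt, the uniform two-sided bounds on $e^{-v}$ reduce this to a reproduction of their argument.
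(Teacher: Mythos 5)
Your proposal is correct and takes essentially the same route as the paper, which likewise just invokes the non-smooth Lojasiewicz--Simon inequality of \cite{FLP07}, noting that the only difference from the classical Keller--Segel case is the weight $e^{-v}$ in the dissipation term of \eqref{Lyapunov}, which is bounded above and below once $v$ is uniformly bounded. One small caveat: Lemma \ref{lowerbound} gives a lower bound for $v$, not for $u$, but your argument does not actually need $u\geq c_1>0$, since the two-sided bound on the weight only involves $v$ and the non-smooth version of the inequality is designed to tolerate possible vanishing of $u$.
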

\begin{remark}
	The convergence also holds true in higher dimensions provided that the solution is uniform-in-time bounded, see \cite{JZ09,J18}.
\end{remark}


For $\Lambda >0$ put
\[
\mathcal{S}(\Lambda) \triangleq \left\{ (u,v) \in C^2(\overline{\Omega}) :
(u,v ) \mbox{ is a solution to \eqref{eqn:biharmoniceqn}
	satisfying }  \|u\|_{L^1(\Omega)} = \Lambda \right\}.
\]
Here we recall the quantization property of solutions to  \eqref{eqn:biharmoniceqn}.  
By \cite[Theorem 1]{ssAMSA2000} for $\Lambda \not\in 4\pi \mathbb{N}$,  there exists some $C>0$ such that
\[
\sup \{ \|(u,v)\|_{L^\infty (\Omega)} : (u,v) \in
\mathcal{S}(\Lambda) \} \leq C
\]
and
\[
E_\ast(\Lambda) \triangleq \inf \{ E(u,v) : (u,v) \in
\mathcal{S}(\Lambda) \} \geq - C.
\] 
Thus taking account of Proposition \ref{prop2}, 
for a pair of initial data $(u_0,v_0)$ with $v_0=(I-\Delta)^{-1}u_0$ satisfying 
\begin{align}\label{assum0}
\begin{cases}
\|u_0\|_{L^1(\Omega)} = \Lambda\not\in 4\pi \mathbb{N},\\[2pt]
E(u_0,v_0) <E_\ast(\Lambda),
\end{cases}
\end{align}
then the corresponding global solution must blow up in infinite time. Indeed, we have
\begin{lemma} Suppose \eqref{assum0} holds, then the corresponding global solution blows up in infinite time such that
	\begin{equation*}
	\lim\limits_{t\rightarrow+\infty}\|u(t)\|_{L^\infty(\Omega)}=+\infty.
	\end{equation*}
	More precisely, we have
	\begin{equation*}
	\lim\limits_{t\rightarrow+\infty}\int_\Omega uv dx=\lim\limits_{t\rightarrow+\infty}\int_\Omega (|\nabla v|^2+v^2) dx=\lim\limits_{t\rightarrow+\infty}\int_\Omega e^{\alpha v} dx=+\infty
	\end{equation*}for any $\alpha>\frac12.$
\end{lemma}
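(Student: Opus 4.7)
The plan is to argue by contradiction in three layers, following the Lyapunov strategy of \cite{HW01,ssMAA2001}. By the dissipation identity \eqref{Lyapunov} the functional $E(u(t),v(t))$ is non-increasing, so $E(u(t),v(t))\le E(u_0,v_0)<E_\ast(\Lambda)$ for every $t\ge 0$. If $(u,v)$ were uniform-in-time bounded, Proposition \ref{prop2} would produce a stationary limit $(u_s,v_s)\in\mathcal{S}(\Lambda)$ with $E(u_s,v_s)\le E(u_0,v_0)<E_\ast(\Lambda)$, contradicting the definition of $E_\ast(\Lambda)$; hence the orbit is not uniform-in-time bounded.

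The key step is to show $\int_\Omega u(t)v(t)\,dx\to+\infty$. Assume for contradiction $\int u(t_n)v(t_n)\,dx\le M$ along some $t_n\to\infty$. Testing $-\Delta v+v=u$ by $v$ yields $\int uv\,dx=\|v\|_{H^1}^2$, so $\|v(t_n)\|_{H^1}\le\sqrt M$, and Moser--Trudinger gives $\|e^{\alpha v(t_n)}\|_{L^p(\Omega)}\le C(\alpha,p)$ for all $\alpha,p$. The bound $E\le E_0$ then forces $\int u(t_n)\log u(t_n)\,dx\le E_0+M/2$, so $\{u(t_n)\}$ is uniformly integrable. A direct computation from the PDE gives $\tfrac{d}{dt}\|v\|_{H^1}^2=2\int uve^{-v}\,dx-2\int u^2 e^{-v}\,dx\le\int v^2 e^{-v}\,dx\le C(\Omega)$, so $\|v(s)\|_{H^1}$ stays bounded on each $[t_n,t_n+1]$ and $E(t)$ is uniformly bounded below; consequently $\int_0^\infty D(s)\,ds<\infty$ for $D=\int ue^{-v}|\nabla\log u-\nabla v|^2\,dx$, and one can pick $s_n\in[t_n,t_n+1]$ with $D(s_n)\to 0$. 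Using the identity
\[
D(s)=4\int_\Omega\bigl|\nabla\bigl(\sqrt{u(s)}\,e^{-v(s)/2}\bigr)\bigr|^2\,dx,
\]
the sequence $w_n:=\sqrt{u(s_n)}\,e^{-v(s_n)/2}$ is bounded in $L^2$ (since $\int w_n^2\le\Lambda$) with $\|\nabla w_n\|_{L^2}\to 0$, so Poincar\'e's inequality and the compact embedding $H^1\hookrightarrow L^2$ force $w_n\to c$ in $L^2$ for some constant $c\ge 0$. Combined with $v(s_n)\to v_\ast$ strongly in $L^p$ and $e^{v(s_n)}\to e^{v_\ast}$ in $L^p$ (any $p<\infty$), this yields $u(s_n)=w_n^2 e^{v(s_n)}\to c^2 e^{v_\ast}=:u_\ast$ strongly in $L^1$, so $\int u_\ast=\Lambda$ and $(u_\ast,v_\ast)\in\mathcal{S}(\Lambda)$. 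Lower semicontinuity of $E$ then produces $E_\ast(\Lambda)\le E(u_\ast,v_\ast)\le\lim E(s_n)\le E_0<E_\ast(\Lambda)$, the desired contradiction.

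The remaining limits follow routinely. The second equation immediately yields $\int uv\,dx=\int(|\nabla v|^2+v^2)\,dx$. For $\int e^{\alpha v}\,dx\to+\infty$ with $\alpha>\tfrac12$, Young's inequality $ab\le a\log a-a+e^b$ applied with $a=u$ and $b=\alpha v$, integrated and combined with $\int u\log u\le E_0+\tfrac12\int uv$, gives
\[
\Bigl(1-\tfrac{1}{2\alpha}\Bigr)\int_\Omega uv\,dx+\tfrac{\Lambda-E_0}{\alpha}\le\tfrac{1}{\alpha}\int_\Omega e^{\alpha v}\,dx,
\]
whose left-hand side diverges. Finally, 2D elliptic regularity and the Sobolev embedding applied to $-\Delta v+v=u$ give $\|v\|_{L^\infty}\le C\|u\|_{L^p}$ for any $p>1$, hence $\int uv\,dx\le\Lambda\|v\|_{L^\infty}\le C\Lambda^{1+1/p}\|u\|_{L^\infty}^{1-1/p}$, so a bounded subsequence of $\|u(t_n)\|_{L^\infty}$ would keep $\int u(t_n)v(t_n)\,dx$ bounded, contradicting what was just established.

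The main obstacle is the compactness step in the proof of $\int uv\to+\infty$: the natural expression of the dissipation involves $\nabla\log u$, which is ill-defined where $u$ vanishes, and the rewriting $D=4\int|\nabla(\sqrt{u}\,e^{-v/2})|^2\,dx$ is what converts $D(s_n)\to 0$ into the algebraic identity $u_\ast=c^2 e^{v_\ast}$ in the limit, thereby identifying the weak cluster point as a genuine element of $\mathcal{S}(\Lambda)$.
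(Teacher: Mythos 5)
Your proposal is correct and follows essentially the same route as the paper: monotonicity of $E$ together with the quantization bound $E_\ast(\Lambda)$, a contradiction argument assuming $\liminf_{t\to\infty}\int_\Omega uv\,dx<\infty$, selection of times where the dissipation $4\int_\Omega|\nabla(\sqrt{u}\,e^{-v/2})|^2\,dx$ vanishes, and identification of the resulting limit as an element of $\mathcal{S}(\Lambda)$, contradicting $E(u_0,v_0)<E_\ast(\Lambda)$. The differences are only cosmetic (you keep $\|v\|_{H^1}$ bounded on unit intervals via $\tfrac{d}{dt}\|v\|_{H^1}^2\le C$ where the paper uses a short-interval bound on $\int_\Omega uv\,dx$, and you spell out the easy deductions of $\|u\|_{L^\infty}\to\infty$ and $\int_\Omega e^{\alpha v}\,dx\to\infty$), so the argument matches the paper's.
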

\begin{proof} The proof is almost the same as that of \cite[Theorem 1]{ssMAA2001} with minor modifications since now the evolution systems are different. We report the detail for reader's convenience. 
	
	First,	according to the convergence result Proposition \ref{prop2}, if $u$ is uniform-in-time bounded, then the global solution must converge to an equilibrium belonging to $\mathcal{S}(\Lambda)$ and thus
	\begin{equation}\label{assum2}
	\lim\limits_{t\rightarrow+\infty}E(u(t),v(t))>-\infty.
	\end{equation}	
It suffices to show that 
	\begin{equation}\label{blowup1}
	\lim\limits_{t\rightarrow+\infty}\int_\Omega uv dx=+\infty,
	\end{equation}	
 since $\int_\Omega uvdx=\int_\Omega (|\nabla v|^2+v^2) dx$ and on the other hand, with \eqref{blowup1} and employing Young's inequality, we have
\begin{align*}
	\alpha\int_\Omega uv\leq& \int_\Omega u\log u+e^{-1}\int_\Omega e^{\alpha v}dx\non\\
	\leq&\frac12\int_\Omega uvdx+E(u,v)+e^{-1}\int_\Omega e^{\alpha v}dx.
\end{align*}
Thus, for any $\alpha>\frac12$,
\begin{equation*}
	\lim\limits_{t\rightarrow+\infty}\int_\Omega e^{\alpha v} dx=+\infty
\end{equation*}
follows from \eqref{blowup1} and \eqref{assum2}.

Now, suppose the contrary: $	\liminf\limits_{t\rightarrow+\infty}\int_\Omega uvdx<+\infty.$ There exist a constant $C_*>0$ and a time sequence $t_k\nearrow+\infty$ such that
	\begin{equation*}
		\int_\Omega u(t_k)v(t_k)dx\leq C_*.
	\end{equation*}
Assumption \eqref{assum2} and \eqref{Lyapunov} indicates that
\begin{equation}\label{int0}
	\int_0^\infty\int_\Omega ue^{-v}|\nabla \log v-\nabla v|^2dxdt<+\infty.
\end{equation}	
	
Since \begin{equation*}
\int_\Omega uv dx=\int_\Omega (|\nabla v|^2+v^2)dx,
\end{equation*}	we derive from \eqref{unb0} that
\begin{equation}\label{ineq0}
\frac{d}{dt}\int_\Omega uvdx\leq 2\int_\Omega uv dx.
\end{equation}
Take $\delta_*>0$ such that $\delta_*(C_*+1)=\frac14$. Then for some $\tilde{t}_k\in(t_k,t_k+\delta_*)$ and any $t_k\leq t<\tilde{t}_k$, we have
\begin{equation*}
	\int_\Omega u(t)v(t)dx<C_*+1
\end{equation*}
and thus \eqref{ineq0} implies that
\begin{equation*}
	\int_\Omega u(\tilde{t}_k)v(\tilde{t}_k)\leq \int_\Omega u(t_k)v(t_k)+2\delta_*(C_*+1)\leq C_*+\frac12.
\end{equation*}
Then since $t\in[t_k,t_k+\delta_*]\mapsto\int_\Omega u(v)v(t)dx$ is continuous, this means that for all $t\in[t_k,t_k+\delta_*]$
\begin{equation}\non
\int_\Omega u(v)v(t)dx\leq C_*+1.
\end{equation}
Note that $\delta_*$ is independent of $k.$ We infer from \eqref{int0} that
\begin{align}
	&\lim\limits_{k\rightarrow+\infty}\inf\limits_{t\in(t_k,t_k+\delta_*)}\int_\Omega ue^{-v}|\nabla \log v-\nabla v|^2dx\non\\
	&\leq\delta_*^{-1}\lim\limits_{k\rightarrow+\infty}\int_{t_k}^{t_k+\delta_*}\int_\Omega ue^{-v}|\nabla \log v-\nabla v|^2dxdt=0.\non
\end{align}
It follows that for some $\hat{t}_k\in(t_k,t_k+\delta_*)$, it holds that
\begin{equation*}
	\lim\limits_{k\rightarrow+\infty}\int_\Omega  u(\hat{t}_k)e^{-v(\hat{t}_k)}|\nabla \log v(\hat{t}_k)-\nabla v(\hat{t}_k)|^2dx=0,
\end{equation*}
which is equivalent to 
\begin{equation*}
		\lim\limits_{k\rightarrow+\infty}\bigg{\|}\nabla \left(u(\hat{t}_k)e^{-v(\hat{t}_k)}\right)^{1/2}\bigg{\|}=0.
\end{equation*}
Since $\|ue^{-v}\|_{L^1(\Omega)}\leq \|u\|_{L^1(\Omega)}=\Lambda$, we deduce by passing to a subsequence that
\begin{equation}\non
	\lim\limits_{k\rightarrow+\infty}\frac{1}{|\Omega|}\int_\Omega\left(u(\hat{t}_k)e^{-v(\hat{t}_k)}\right)^{1/2}dx=C_0
\end{equation}with some constant $C_0\geq0$. Thus, by Poincar\'e--Wirtinger's inequality, we infer that \[\left(u(\hat{t}_k)e^{-v(\hat{t}_k)}\right)^{1/2}\rightarrow C_0\] in $H^1(\Omega)$ and hence for any $p>1$
\begin{equation}\non
	u(\hat{t}_k)e^{-v(\hat{t}_k)}\rightarrow C^2_0\qquad\text{in}\qquad  L^p(\Omega).
\end{equation}

On the other hand, due to relation \eqref{var0a}, we observe by Poincar\'e's inequality that
\begin{align*}
	\|v_t\|\leq& \|(I-\Delta)^{-1}[ue^{-v}]-\overline{ue^{-v}}\|+\|ue^{-v}-\overline{ue^{-v}}\|\\
	\leq& C\|\nabla (ue^{-v})\|_{L^1(\Omega)}\\
	\leq& C\left(\int_\Omega \frac{|\nabla(ue^{-v})|^2}{ue^{-v}}\right)^{1/2}\left(\int_\Omega ue^{-v}\right)^{1/2}\\
	\leq&C\|\nabla (ue^{-v})^{1/2}\|
\end{align*}
which implies that
\begin{equation}\non
	\lim\limits_{k\rightarrow+\infty}\|v_t(\hat{t}_k)\|=0.
\end{equation}

Moreover, applying the Brezis--Merle inequality \cite{BM} to the second equation, we infer with some $\alpha>0$, it holds that
\begin{equation}
	\sup\limits_{k}\int_\Omega e^{\alpha v(\hat{t}_k)}dx<+\infty.\non
\end{equation}

Since
\begin{equation}\non
	\int_\Omega |\nabla v(\hat{t}_k)|^2+v^2(\hat{t}_k)dx=\int_\Omega u(\hat{t}_k)v(\hat{t}_k)dx\leq C_*+1,
\end{equation}
we can now pass to a subsequence such that
\begin{equation}\non
	v(\hat{t}_k)\rightarrow v_\infty\qquad\text{weakly in}\;\;H^1(\Omega)\qquad\text{and}\;\; e^{v(\hat{t}_k)}\rightarrow e^{v_\infty}\qquad\text{strongly in}\;\;L^p(\Omega)
\end{equation}for any $p>1$. The latter convergence follows from an application of the compact embedding $H^1(\Omega)\hookrightarrow L^p(\Omega)$ and Moser-Trudinger's inequality.

Then setting $t=\hat{t}_k$ in the second equation of \eqref{chemo2} and letting $k\rightarrow+\infty,$ we deduce from  that
\begin{equation}\non
	u(\hat{t}_k)=\left(u(\hat{t}_k)e^{-v(\hat{t}_k)}\right)\cdot e^{v(\hat{t}_k)}\rightarrow C_0^2 e^{v_\infty}\quad\text{in}\;\; L^p(\Omega)
\end{equation}for $p>1$. Thus, we obtain that
\begin{equation}\non
	-\Delta v_\infty+v_\infty=C_0^2e^{v_{\infty}}\;\;\text{in}\;\;\Omega,\qquad\frac{\partial v_\infty}{\partial\nu}=0\;\;\text{on}\;\partial\Omega.
\end{equation}
Moreover, $\|v_\infty\|_{L^1(\Omega)}=\|v(\hat{t}_k)\|_{L^1(\Omega)}=\Lambda$ and hence $C_0^2=\Lambda/\int_\Omega e^{v_\infty}dx.$
Letting $u_\infty=\Lambda e^{v_\infty}/\int_\Omega e^{v_\infty}dx$, we obtain that $u(\hat{t}_k)\rightarrow u_\infty$ in $L^p(\Omega)$ which indicates that
\begin{equation}\non
	\lim\limits_{k\rightarrow+\infty}\int_\Omega u(\hat{t}_k)\log u(\hat{t}_k)dx=\int_\Omega u_\infty\log u_\infty dx
\end{equation}as well as
\begin{equation}\non
	\lim\limits_{k\rightarrow+\infty}\int_\Omega u(\hat{t}_k)v(\hat{t}_k)dx=\int_\Omega u_\infty v_\infty dx.
\end{equation}
Therefore, 
\begin{align*}
	E(u_0,v_0)\geq\lim\limits_{k\rightarrow+\infty}E(u(\hat{t}_k),v(\hat{t}_k))=E(u_\infty,v_\infty)\geq E_*(\Lambda),
\end{align*}
which contradicts to our assumption. This completes the proof.
\end{proof}

\subsection{Construction of  Initial Data Satisfying \eqref{assum0}}\label{construction}
This part is devoted to construction of  an example satisfying \eqref{assum0} in the radially symmetric case. Here we give an example in detail based on some calculations in \cite{fs5}.

From now on, we assume $\Omega=B_1(0)$ and we define
for any $\lambda \geq 1$ that
\[
\overline{u}_\lambda (x) := 
\frac{8\lambda^2}{(1+\lambda^2|x|^2)^2}.
\]


	Let $\Lambda \in (8\pi, \infty) \setminus 4\pi \mathbb{N}$. 
	Take $r\in (0,1)$ and for any $r_1 \in
	(0,r)$, let $\phi_{r,r_1}$ be a smooth and radially symmetric function satisfying
	\[
	\phi_{r,r_1}(B_{r_1}(0)) =1, \ 
	0\leq \phi_{r,r_1} \leq 1, \ 
	\phi_{r,r_1}(\R^2 \setminus
	B_r(0)  ) =0,\ x \cdot \nabla \phi_{r,r_1}(x) \leq 0.
	\]

	Now we define $u_0 \triangleq a\overline{u}_\lambda \phi_{r,r_1}
	$ and $v_0 \triangleq (I-\Delta)^{-1}u_0$,
	where $a > \Lambda /8\pi > 1$. We first prove that
	\begin{lemma}
		For any $\lambda>1$ there exists $a> \Lambda /8\pi$  such that
		\begin{eqnarray}\label{mass_inequality}
		&& \io u_0 \,dx= \Lambda.
		\end{eqnarray}
	\end{lemma}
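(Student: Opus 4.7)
The plan is to choose $a$ explicitly as the unique value that makes the mass equation hold, and then verify the lower bound $a>\Lambda/(8\pi)$ by comparing the truncated integral against the full integral of $\overline{u}_\lambda$ over $\mathbb{R}^2$.

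First, I would compute $\int_{\mathbb{R}^2}\overline{u}_\lambda\,dx$. Using the substitution $y=\lambda x$ followed by polar coordinates,
\[
\int_{\mathbb{R}^2}\frac{8\lambda^2}{(1+\lambda^2|x|^2)^2}\,dx
=\int_{\mathbb{R}^2}\frac{8}{(1+|y|^2)^2}\,dy
=16\pi\int_0^\infty\frac{\rho}{(1+\rho^2)^2}\,d\rho
=8\pi,
\]
so $\|\overline{u}_\lambda\|_{L^1(\mathbb{R}^2)}=8\pi$ for every $\lambda>1$.

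Next, since $\phi_{r,r_1}$ is supported in $\overline{B_r(0)}\subsetneq B_1(0)=\Omega$ and satisfies $0\le\phi_{r,r_1}\le 1$ with $\phi_{r,r_1}\equiv 1$ on $B_{r_1}(0)$, and since $\overline{u}_\lambda>0$ everywhere on $\mathbb{R}^2$, I would observe that
\[
0<\int_{B_{r_1}(0)}\overline{u}_\lambda\,dx
\le\int_\Omega \overline{u}_\lambda\phi_{r,r_1}\,dx
\le\int_{B_r(0)}\overline{u}_\lambda\,dx
<\int_{\mathbb{R}^2}\overline{u}_\lambda\,dx=8\pi,
\]
where the last inequality is strict because $\overline{u}_\lambda>0$ on the nontrivial open set $\mathbb{R}^2\setminus\overline{B_r(0)}$.

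Finally, define
\[
a\triangleq\frac{\Lambda}{\int_\Omega\overline{u}_\lambda\phi_{r,r_1}\,dx}.
\]
By construction $\int_\Omega u_0\,dx=a\int_\Omega\overline{u}_\lambda\phi_{r,r_1}\,dx=\Lambda$, which is \eqref{mass_inequality}. Moreover, the strict upper bound $\int_\Omega\overline{u}_\lambda\phi_{r,r_1}\,dx<8\pi$ immediately yields $a>\Lambda/(8\pi)$ as required. There is no real obstacle here; the only thing to be careful about is keeping the upper bound on the truncated integral strict (which is automatic from $r<1$ and the pointwise positivity of $\overline{u}_\lambda$), so that the inequality $a>\Lambda/(8\pi)$ is genuine rather than just $a\ge\Lambda/(8\pi)$.
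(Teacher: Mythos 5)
Your proposal is correct and follows essentially the same route as the paper: define $a$ by the mass constraint and use that $\int_\Omega \overline{u}_\lambda\phi_{r,r_1}\,dx$ is strictly below $8\pi$ to get $a>\Lambda/(8\pi)$. The only difference is that the paper computes the exact value $\int_{B_\ell(0)}\overline{u}_\lambda\,dx=8\pi\bigl(1-\tfrac{1}{1+(\lambda\ell)^2}\bigr)$, which it needs again later to obtain the upper bound \eqref{bound_of_a} on $a$, whereas your comparison with the full-plane integral suffices for this lemma itself.
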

\begin{proof}
	Firstly by changing of variables,  we see that
	\begin{eqnarray} \label{cal0}
		\int_{B_\ell(0)} \overline{u}_\lambda\,dx \nonumber 
		&=& 8 \int_{B_\ell(0)}
		\frac{\lambda^2}{(1+\lambda^2|x|^2)^2}\,dx \nonumber \\
		&=&8 \int_{B_\ell(0)} \frac{dy}{(1+|y|^2)^2}
		\nonumber \\
		&=&16 \pi \int_0^{\lambda \ell} \frac{s}{(1+s^2)^2}\,ds
		\nonumber \\
		&=&8 \pi \int_0^{(\lambda \ell)^2}
		\frac{d\tau}{(1+\tau)^2} \nonumber \\
		&=& 8 \pi \cdot 
		\left( 1 -\frac{1}{1+(\lambda\ell)^2} \right)
		\quad \mbox{ for } \ell >0,
	\end{eqnarray}
	and hence
	\begin{equation}\label{eqn:relat-a-lambda}
	8 \pi \cdot 
	\left( 1 -\frac{1}{1+(\lambda r_1)^2} \right)
	<
	\io \overline{u}_\lambda \phi_{r,r_1}
	<
	8 \pi \cdot 
	\left( 1 -\frac{1}{1+(\lambda r)^2} \right).						 
	\end{equation}
	Then there is a unique constant 
	\begin{equation*}
	a=a(r_1,r,\lambda) > \frac{\Lambda }{8\pi}, 
	\end{equation*}
	satisfying (\ref{mass_inequality}). 
\end{proof}

Observing that $$f(\lambda) \triangleq 1 - \frac{1}{1+(\lambda
	r_1)^2} \to 1 \quad \mbox{ as } \lambda \to \infty,$$
and that
$$
f'(\lambda)=  \frac{2\lambda r_1}{(1+(\lambda r_1)^2)^2}>0 \quad \mbox{for }\lambda>0,
$$ 
we have $1>f(\lambda) \geq f(1)$ for all $\lambda \geq 1$.	
	Thus the constant $a=a(r_1,r,\lambda)$ satisfies
	\begin{equation}\label{bound_of_a}
	 \frac{\Lambda }{8\pi} <a < 
	\frac{\Lambda}{8\pi f(\lambda)}   
	\leq \frac{\Lambda}{8\pi f(1)}. 
	\end{equation}

Now we aim to show that $E(u_0,v_0)$ can be sufficiently negative as $\lambda\rightarrow+\infty$. First, we have
\begin{lemma}
	\label{lmbl1}There exists $C>0$ such that
		\begin{eqnarray*}
	\int_\Omega u_0 \log u_0 \,dx& \leq & 
	 16a\pi \cdot \log \lambda + C \ \mbox{ as }
	\ \lambda \rightarrow \infty,   
	\end{eqnarray*}
	where the constant $C>0$ is independent of $a$.
\end{lemma}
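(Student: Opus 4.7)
The plan is to expand $u_0\log u_0$ using $u_0=a\overline{u}_\lambda \phi_{r,r_1}$, exploit the fact that $\phi_{r,r_1}\in[0,1]$ makes $\phi_{r,r_1}\log\phi_{r,r_1}\leq 0$ to discard one term, and then use the explicit form of $\overline{u}_\lambda$ to isolate the $\log\lambda$ contribution.

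First I would write
\begin{equation*}
u_0\log u_0 = a\overline{u}_\lambda\phi_{r,r_1}\bigl[\log a+\log\overline{u}_\lambda+\log\phi_{r,r_1}\bigr].
\end{equation*}
Since $0\leq \phi_{r,r_1}\leq 1$, the term $a\overline{u}_\lambda\phi_{r,r_1}\log\phi_{r,r_1}$ is non-positive and may be dropped. Next, inserting
\begin{equation*}
\log\overline{u}_\lambda(x)=\log 8+2\log\lambda-2\log\bigl(1+\lambda^2|x|^2\bigr)
\end{equation*}
and using the mass identity $\int_\Omega a\overline{u}_\lambda\phi_{r,r_1}\,dx=\Lambda$, I would obtain
\begin{equation*}
\int_\Omega u_0\log u_0\,dx \leq \Lambda\log a+\Lambda\log 8+2\Lambda\log\lambda -2\int_\Omega a\overline{u}_\lambda\phi_{r,r_1}\log\bigl(1+\lambda^2|x|^2\bigr)\,dx.
\end{equation*}
The last integral is non-negative, hence may also be discarded.

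Now I need to convert the coefficient $2\Lambda$ in front of $\log\lambda$ into the desired $16a\pi$. By the computation \eqref{cal0}, for any ball $B_\ell(0)\supset\Omega$ one has $\int_{B_\ell(0)}\overline{u}_\lambda\,dx<8\pi$, and since $\phi_{r,r_1}\leq 1$ vanishes outside $B_r(0)\subset\Omega$, this yields
\begin{equation*}
\Lambda=\int_\Omega a\overline{u}_\lambda\phi_{r,r_1}\,dx<8\pi a.
\end{equation*}
Because $\lambda\geq 1$, $\log\lambda\geq 0$, so $2\Lambda\log\lambda\leq 16a\pi\log\lambda$.

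Finally I would verify that the remaining additive terms are bounded independently of $a$ and $\lambda$: $\Lambda\log 8$ is a fixed constant, and by \eqref{bound_of_a}, $a\leq \Lambda/(8\pi f(1))$, so $\log a$ is bounded by a constant depending only on $\Lambda$ and $r_1$. Therefore
\begin{equation*}
\int_\Omega u_0\log u_0\,dx\leq 16a\pi\log\lambda+C
\end{equation*}
with $C$ independent of $a$ (and of $\lambda$). The only minor subtlety is ensuring $C$ does not implicitly depend on $a$; this is where the uniform upper bound for $a$ from \eqref{bound_of_a} is essential, but no real obstacle arises since all estimates reduce to elementary manipulations of the mass identity and the explicit profile $\overline{u}_\lambda$.
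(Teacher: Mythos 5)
Your argument is correct and essentially the paper's own: expand $\log u_0=\log a+\log\overline{u}_\lambda+\log\phi_{r,r_1}$, discard the non-positive pieces, bound the $\log\overline{u}_\lambda$ part by $2\log\lambda+\log 8$, and control $\log a$ via \eqref{bound_of_a}; your use of the mass identity $\Lambda<8\pi a$ instead of $\int_\Omega\overline{u}_\lambda\leq 8\pi$ is only a cosmetic variation (and avoids any sign issue when dropping $\phi_{r,r_1}$). No gaps.
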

\begin{proof}
	First, we note that
	\begin{eqnarray*}
		\int_\Omega u_0 \log u_0 \,dx
		& \leq & a \io
		\overline{u}_\lambda \log \overline{u}_\lambda + a \log a
		\io \overline{u}_\lambda.
	\end{eqnarray*}
	Since $\log \overline{u}_\lambda \leq \log (8\lambda^2) = 2 \log \lambda +\log 8$ and $\io \overline{u}_\lambda \leq 8\pi$,  
	\begin{eqnarray*}
	\int_\Omega u_0 \log u_0 \,dx& \leq & 
	2a \cdot 8\pi \cdot \log \lambda + C \ \mbox{ as }
	\ \lambda \rightarrow \infty,   
	\end{eqnarray*}
	where we remark that the constant $C$ is independent of $a$ in view of \eqref{bound_of_a}.
\end{proof}	
Since
\begin{equation*}
	\begin{cases}\non
	v_0-\Delta v_0=u_0\;\;\text{in}\; B_1(0),\\
	\partial_\nu v_0=0\;\;\text{on}\;\partial B_1(0),
	\end{cases}
\end{equation*}by introducing $\xi=|x|$, we have
\begin{equation*}
\begin{cases}
v_0(\xi)- v_{0\xi\xi}-\frac{v_{0\xi}}{\xi}=u_0(\xi)\;\;\text{for}\; 0<\xi<1,\\
v_{0\xi}(1)=0,
\end{cases}
\end{equation*}and due to the radially symmetry $v_{0\xi}(0)=0.$ It follows that
\begin{equation*}\non
-\xi^{-1}\left(\xi\frac{\partial}{\partial \xi}v_0(\xi)\right)_\xi=u_0(\xi)-v_0(\xi),
\end{equation*}
from which we obtain that
\begin{equation*}
v_{0\xi}(\xi)=\frac{1}{\xi}\int_0^\xi \left(v_0(\sigma)-u_0(\sigma)\right)\sigma d\sigma
\end{equation*}	
and hence
\begin{equation}\label{v_form}
v_0(\xi)=v_0(1)+\int_\xi^1s^{-1}\int_0^s\sigma u_0(\sigma)d\sigma ds-\int_\xi^1s^{-1}\int_0^s\sigma v_0(\sigma)d\sigma ds.
\end{equation}
Note that	
\begin{align*}
\Lambda=&\int_\Omega v_0dx\non\\
=&2\pi\int_0^1 \xi v_0(\xi)d\xi\non\\
=&2\pi\int_0^1\xi\left[v_0(1)+\int_\xi^1s^{-1}\int_0^s\sigma u_0(\sigma)d\sigma ds-\int_\xi^1s^{-1}\int_0^s\sigma v_0(\sigma)d\sigma ds\right]d\xi\non\\
=&\pi v_0(1)+2\pi\int_0^1\xi\left[\int_\xi^1s^{-1}\int_0^s\sigma u_0(\sigma)d\sigma ds-\int_\xi^1s^{-1}\int_0^s\sigma v_0(\sigma)d\sigma ds\right]d\xi,
\end{align*}
thus	
\begin{align*}
 v_0(1) =\frac{\Lambda}{\pi} -2\int_0^1\xi\left[\int_\xi^1s^{-1}\int_0^s\sigma u_0(\sigma)d\sigma ds-\int_\xi^1s^{-1}\int_0^s\sigma v_0(\sigma)d\sigma ds\right]d\xi.
\end{align*}
Thus by a direct substitution of $v_0(1)$ into \eqref{v_form}, we obtain that
\begin{align}\label{vform}
v_0(\xi)=&\frac{\Lambda}{\pi }-2\int_0^1t\left[\int_t^1s^{-1}\int_0^s\sigma u_0(\sigma)d\sigma ds-\int_t^1s^{-1}\int_0^s\sigma v_0(\sigma)d\sigma ds\right]dt\non\\
&+\int_\xi^1s^{-1}\int_0^s\sigma u_0(\sigma)d\sigma ds-\int_\xi^1s^{-1}\int_0^s\sigma v_0(\sigma)d\sigma ds.
\end{align}
With above representation formula of $v_0$, we obtain the following estimate.
\begin{lemma}\label{lmbl2}There exists $C>0$ such that
\begin{equation*}
	\int_\Omega u_0 v_0 dx\geq 32\pi a^2\log\lambda-C,
\end{equation*}
where $C>0$ is independent of $a$.
\end{lemma}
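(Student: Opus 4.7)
The plan is to derive an exact radial identity for $\int_\Omega u_0 v_0\,dx$ and then to extract the $\log\lambda$ leading term from it. By radial symmetry and integrating the second equation of \eqref{chemo2} over $B_\xi(0)$, I get $\xi\, v_{0\xi}(\xi) = M(\xi) - m(\xi)$, with $m(\xi) := \int_0^\xi \sigma u_0(\sigma)\,d\sigma$ and $M(\xi) := \int_0^\xi \sigma v_0(\sigma)\,d\sigma$. Integrating this from $\xi$ to $1$ using the boundary condition $v_{0\xi}(1)=0$ yields the representation $v_0(\xi) = v_0(1) + \int_\xi^1 \frac{m(s)-M(s)}{s}\,ds$.

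Multiplying by $2\pi\xi u_0(\xi)$, integrating over $\xi\in(0,1)$, and swapping the order of integration via Fubini (using $\int_0^s \xi u_0(\xi)\,d\xi = m(s)$) produces
\[
\int_\Omega u_0 v_0\,dx = \Lambda\, v_0(1) + 2\pi \int_0^1 \frac{m(s)\bigl(m(s) - M(s)\bigr)}{s}\,ds.
\]
Since $v_0 \geq 0$ by the maximum principle, the first summand is non-negative and may be dropped. The task then reduces to producing a sharp lower bound for $2\pi \int_0^1 m^2/s\,ds$ and a uniform upper bound for $2\pi \int_0^1 mM/s\,ds$.

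For the main term, the explicit form $m(\xi)=4a\lambda^2\xi^2/(1+\lambda^2\xi^2)$ on $(0,r_1)$ (computed exactly as in the proof of \eqref{mass_inequality}), together with the substitution $t = 1+\lambda^2\xi^2$, gives the closed form
\[
2\pi \int_0^{r_1} \frac{m^2}{\xi}\,d\xi = 16\pi a^2 \Bigl[\log(1+\lambda^2 r_1^2) + \frac{1}{1+\lambda^2 r_1^2} - 1\Bigr] \geq 32\pi a^2 \log\lambda - C
\]
for $\lambda$ large; the remaining range $\xi\in(r_1,1)$ contributes at most a constant since $m(\xi)\leq \Lambda/(2\pi)$ and $\int_{r_1}^1 d\xi/\xi = -\log r_1$ is finite.

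For the error term, I use the two-dimensional $L^1 \to L^2$ elliptic regularity $\|v_0\|_{L^2(\Omega)} \leq C \|u_0\|_{L^1(\Omega)} = C\Lambda$, which is a consequence of the logarithmic, and hence $L^2$-integrable, singularity of the Neumann Green's function of $I - \Delta$ on $B_1(0)$. Cauchy-Schwarz then gives $M(\xi)=\frac{1}{2\pi}\int_{B_\xi(0)} v_0\,dx \leq \frac{1}{2\pi}\|v_0\|_{L^2(\Omega)}|B_\xi|^{1/2} \leq C\Lambda \xi$, whence $2\pi\int_0^1 \frac{mM}{s}\,ds \leq \Lambda \int_0^1 \frac{M(s)}{s}\,ds \leq C\Lambda^2$. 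Combining the two estimates and invoking \eqref{bound_of_a} (which gives a $\lambda$-independent bound on $a$) yields the claim. The only subtle step is the $L^2$-estimate on $v_0$, since $\|u_0\|_{L^2}$ itself grows with $\lambda$ and so cannot be used to control $v_0$; once the convolution bound with the $L^2$-Bessel kernel is invoked, the remaining computations are explicit and elementary.
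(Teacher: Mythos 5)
Your argument is correct and follows essentially the same route as the paper: the radial representation of $v_0$ in terms of $m(s)=\int_0^s\sigma u_0\,d\sigma$, the identification of the leading term $2\pi\int_0^1 m^2(s)s^{-1}\,ds$ (which is exactly the paper's $I_3=\int_\Omega|\nabla w|^2$ after Fubini), and the elliptic estimate $\|v_0\|_{L^2(\Omega)}\leq C\|u_0\|_{L^1(\Omega)}$ to control the cross term (the paper's $I_4$). The only differences are cosmetic: you drop the nonnegative contribution $\Lambda v_0(1)$ instead of eliminating $v_0(1)$ through the mass identity (thereby avoiding the paper's terms $I_1$ and $I_2$), and you evaluate the leading integral in closed form via the substitution $t=\lambda^2\xi^2$ rather than through the paper's integration by parts against the weight $\log\xi^{-1}$.
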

\begin{proof}Using formula \eqref{vform}, we infer that
\begin{align*}
\int_\Omega u_0 v_0dx=&\frac{\Lambda^2}{\pi }-4\pi\int_0^1u_0(\xi)\left(\int_0^1t\int_t^1s^{-1}\int_0^s\sigma u_0(\sigma)d\sigma dsdt\right)\xi d\xi\non\\
&+4\pi\int_0^1u_0(\xi)\left(\int_0^1t\int_t^1s^{-1}\int_0^s\sigma v_0(\sigma)d\sigma dsdt\right)\xi d\xi\non\\
&+2\pi\int_0^1u_0(\xi)\left(\int_\xi^1s^{-1}\int_0^s\sigma u_0(\sigma)d\sigma ds\right)\xi d\xi\non\\
&-2\pi\int_0^1u_0(\xi)\left(\int_\xi^1s^{-1}\int_0^s\sigma v_0(\sigma)d\sigma ds\right)\xi d\xi\non\\
\triangleq&\frac{\Lambda^2}{\pi }-I_1+I_2+I_3-I_4.
\end{align*}	
In the sequel, we estimate $I_i$ $(i=1,2,3,4)$ separately. First using the fact $\Lambda=\|u_0\|_{L^1(\Omega)}$, we infer that
\begin{align*}
\int_0^1t\int_t^1s^{-1}\int_0^s\sigma u_0(\sigma)d\sigma dsdt\leq & \int_0^1t\int_t^1s^{-1}\int_0^1\sigma u_0(\sigma)d\sigma dsdt\\
=&\frac{\Lambda}{2\pi}\int_0^1t\int_t^1s^{-1}dsdt\\
=&\frac{\Lambda}{8\pi}.
\end{align*}
It follows that
\begin{equation*}
	I_1=4\pi\int_0^1u_0(\xi)\left(\int_0^1t\int_t^1s^{-1}\int_0^s\sigma u_0(\sigma)d\sigma dsdt\right)\xi d\xi\leq \frac{\Lambda}{2}\int_0^1u_0(\xi)\xi d\xi\leq\frac{\Lambda^2}{4\pi}.
\end{equation*}
Secondly, $I_2\geq0$ and thus can be neglected. Next, we proceed with estimate for $I_3$ from below. Denoting \begin{equation}\non
w(\xi)=\int_\xi^1s^{-1}\int_0^s\sigma u_0(\sigma)d\sigma ds,
\end{equation}
then one verifies that	\[w(1)=0\]	and
\begin{equation}\label{ub3}
w_{\xi}(\xi)=-\xi^{-1}\int_0^\xi\sigma u_0(\sigma)d\sigma,
\end{equation}which implies that
\begin{equation}\non
-\Delta w=u_0(x)\;\;\text{in}\;\Omega.
\end{equation}
As a result,	
\begin{align}
I_3=\int_{\Omega}wu_0 dx=-\int_{\Omega}\Delta w wdx=\int_{\Omega}|\nabla w|^2dx.\non
\end{align}
Thus, by \eqref{ub3} we infer that
\begin{align*}
I_3=&\int_{\Omega}|\nabla w|^2dx=2\pi\int_0^1w_{\xi}^2\xi d\xi\non\\
=&2\pi\int_0^1\xi^{-1}\left|\int_0^\xi\sigma u_0(\sigma)d\sigma\right|^2d\xi\non\\
=&2\pi\log\xi\left|\int_0^\xi\sigma u_0(\sigma)d\sigma\right|^2\Bigg{|}_{0}^1+4\pi\int_0^1\log\xi^{-1}\left(\int_0^\xi\sigma u_0(\sigma)d\sigma\right)\xi u_0(\xi)d\xi\non\\
=&4\pi\int_0^1\left(\int_0^\xi\sigma u_0(\sigma)d\sigma\right)\xi\log\xi^{-1} u_0(\xi)d\xi\non\\
\geq&4\pi\int_0^{r_1}\xi\log\xi^{-1}u_0(\xi)\left(\int_0^\xi\sigma u_0(\sigma)d\sigma\right)d\xi,
\end{align*}	
where we used
\begin{equation*}
\lim\limits_{\xi\rightarrow0}\log\xi\left|\int_0^\xi\sigma u_0(\sigma)d\sigma\right|^2=0.
\end{equation*}
Recalling \eqref{cal0}, we deduce that for any $\xi\leq r_1$
\begin{align*}
	\int_0^\xi\sigma u_0(\sigma)d\sigma=\frac{a}{2\pi}\int_{B_{\xi}(0)}\overline{u}_\lambda dx=4a\left(1-\frac{1}{1+\lambda^2\xi^2}\right)
\end{align*}and thus
\begin{align}\label{I_3form}
I_3\geq&4\pi\int_0^{r_1}\log\xi^{-1}\left(\int_0^\xi\sigma u_0(\sigma)d\sigma\right)\xi u_0(\xi)d\xi\non\\
=&16\pi a^2\int_0^{r_1}\xi\log\xi^{-1}\left(1-\frac{1}{1+\lambda^2\xi^2}\right)\frac{8\lambda^2}{(1+\lambda^2\xi^2)^2}d\xi\non\\
=&16\pi a^2\int_0^{r_1}\xi\log\xi^{-1}\frac{8\lambda^2}{(1+\lambda^2\xi^2)^2}d\xi
-16\pi a^2\int_0^{r_1}\xi\log\xi^{-1}\frac{8\lambda^2}{(1+\lambda^2\xi^2)^3}d\xi.
\end{align}	
Since
\begin{equation*}
\int_0^s\frac{\log \tau}{(1+\tau)^2}d\tau=\frac{s\log s-(1+s)\log(1+s)}{1+s}\rightarrow0\;\;\text{as}\;s\rightarrow+\infty,
\end{equation*}
it follows that
\begin{align}\label{I_3_1est}
&\int_0^{r_1}\xi\log\xi^{-1}\frac{8\lambda^2}{(1+\lambda^2\xi^2)^2}d\xi
\non\\
=&-2\int_0^{\lambda^2 r_1^2}\frac{\log s}{(1+s)^2}ds+4\log\lambda\int_0^{\lambda^2r_1^2}\frac{ds}{(1+s)^2}\non\\
=&\frac{2(1+\lambda^2r_1^2)\log(1+\lambda^2r_1^2)-2\lambda^2r_1^2\log \lambda^2r_1^2}{1+\lambda^2r_1^2}+4\log\lambda(1-\frac{1}{1+\lambda^2r_1^2})\non\\
\geq& 4 \log\lambda -C, 
\end{align}
with some $C>0$.
Similarly,
\begin{align*}
&\int_0^{r_1}\xi\log\xi^{-1}\frac{8\lambda^2}{(1+\lambda^2\xi^2)^3}d\xi\non\\
=&-2\int_0^{\lambda^2 r_1^2}\frac{\log s}{(1+s)^3}ds+4\log\lambda\int_0^{\lambda^2r_1^2}\frac{ds}{(1+s)^3},\end{align*}
where for sufficiently large $\lambda$ satisfying $\lambda r_1>1$ we have
\begin{align*}-2\int_0^{\lambda^2 r_1^2}\frac{\log s}{(1+s)^3}ds=&2\int_0^1 \frac{-\log s}{(1+s)^3}ds-2\int_1^{\lambda^2r_1^2}\frac{\log s}{(1+s)^3}ds\non\\
\leq&-2\int_0^1\log sds=2
\end{align*}	
and on the other hand,
\begin{equation*}
	4\log\lambda\int_0^{\lambda^2r_1^2}\frac{ds}{(1+s)^3}=2\log\lambda\left(1-\frac{1}{(1+\lambda^2r_1^2)^2}\right),
\end{equation*}
thus we have some $C>0$ satisfying
\begin{equation}\label{I_3_2est}
\int_0^{r_1}\xi\log\xi^{-1}\frac{8\lambda^2}{(1+\lambda^2\xi^2)^3}d\xi \leq 2 \log \lambda +C.
\end{equation}
Therefore, by \eqref{I_3form}, \eqref{I_3_1est} and \eqref{I_3_2est},  we obtain that as $\lambda\rightarrow+\infty,$
\begin{align*}
I_3\geq 32\pi a^2\log \lambda-C
\end{align*}
with some $C>0$ which is independent of $a$ in view of \eqref{bound_of_a}.

Last for $I_4$, we first observe by H\"older's inequality that
\begin{align*}
\int_\xi^1s^{-1}\int_0^s\sigma v_0(\sigma)d\sigma ds
\leq&\int_\xi^1s^{-1}\left(\int_0^s\sigma v_0^2(\sigma)d\sigma\right)^{1/2}\left(\int_0^s\sigma d\sigma\right)^{1/2}
\non\\
\leq&\int_\xi^1s^{-1}\left(\int_0^1\sigma v_0^2(\sigma)d\sigma\right)^{1/2}\left(\int_0^s\sigma d\sigma\right)^{1/2}
\non\\
=&\frac{\|v_0\|_{L^2(\Omega)}}{2\sqrt{\pi}}\int_\xi^1ds\leq \frac{\|v_0\|_{L^2(\Omega)}}{2\sqrt{\pi}}\leq  \frac{C\Lambda}{2\sqrt{\pi}}
\end{align*}	with $C>0$ depending only on $\Omega$  due to the regularity estimates for elliptic equations \eqref{unibb3}.
Thus,
\begin{align*}
I_4=2\pi\int_0^1u_0(\xi)\left(\int_\xi^1s^{-1}\int_0^s\sigma v_0(\sigma)d\sigma ds\right)dx\leq\frac{C\Lambda^2}{2\sqrt{\pi}}.
\end{align*}
Now, the proof is complete by collecting all above estimates.	
\end{proof}

\begin{proof}[Proof of Theorem \ref{TH2}]
Boundedness of classical solutions with subcritical mass is established in Proposition \ref{prop1}. 
We focus on the supercritical case.
Thanks to Lemma \ref{lmbl1} and \ref{lmbl2}, we infer that for $r \in (0,1)$ and $r_1 \in (0,r)$ there exists some $C=C(r, r_1, \phi_{r,r_1},\Lambda,\Omega)$ such that
	\begin{eqnarray}\label{lyapunov_decreasing}
	\nn
E(u_0,v_0)&=&\int_\Omega u_0\log u_0-\frac12 u_0v_0\\
	& \leq & 
	16 \pi a \log \lambda 
	- 16 \pi a^2 \log \lambda
		+C \non\\
	\nn
	& = & -16 \pi a (1-a)\log \lambda +C\\
	& \leq & - 2\Lambda  \left(\frac{\Lambda}{8\pi} -1\right)
	\log \lambda + C  \rightarrow
	-\infty \ \mbox{ as } \ \lambda \rightarrow \infty,
	\end{eqnarray}
	where we recalled that \eqref{bound_of_a} implies
	$$a(a-1)> 
	\frac{\Lambda }{8\pi} \left(\frac{\Lambda}{8\pi} -1\right).$$
	
	In the last step,  we construct a suitable initial data based on the above observations. 
	For $\Lambda \in (8\pi, \infty) \setminus  4\pi \mathbb{N}$, we first fix $0<r_1 < r$ 
	and function $\phi_{r,r_1}$. 
	Secondly in view of \eqref{lyapunov_decreasing} 
	we can choose some $\lambda > 1$ such that 
	$$
	- 2\Lambda  \left(\frac{\Lambda}{8\pi} -1\right)
	\log \lambda + C < E_* (\Lambda),
	$$
	where $C=C(r, r_1, \phi_{r,r_1},\Lambda,\Omega)$  is the constant in \eqref{lyapunov_decreasing}.  
	Finally we choose $a$ satisfying \eqref{mass_inequality}.
	Therefore by the above discussion $(u_0,v_0)$ satisfies \eqref{assum0}.
\end{proof}

\bigskip
\noindent\textbf{Acknowledgments} \\
K. Fujie is supported by Japan Society for the Promotion of Science (Grant-in-Aid for Early-Career Scientists; No.\ 19K14576).

\end{document}